\newtheorem{lemma}{Lemma}[section]
\newtheorem{proposition}{Proposition}[section]
\newtheorem{thm}{Theorem}[section]
\def\Var{\textsf{Var}} 
\def\text#1{\mbox{\rm #1}}
\DeclarePairedDelimiter{\ceil}{\lceil}{\rceil}
\newcommand{\argmin}{\mathop{\rm argmin}}
\newcommand{\argmax}{\mathop{\rm argmax}}
\newcommand{\wh}{\widehat}
\newcommand{\fnorm}[1]{\|#1\|_{\rm F}}
\newcommand{\opnorm}[1]{\|#1\|_{\rm op}}
\newcommand{\rank}{\mathop{\sf rank}}
\newcommand{\Tr}{\mathop{\sf Tr}}
\newcommand{\iprod}[2]{\left \langle #1, #2 \right\rangle}
\newtheorem*{m2'}{Condition M2'}
\title{Phase Transitions in Approximate Ranking
}
\author{Chao Gao}
\affil{
University of Chicago

chaogao@galton.uchicago.edu
}
\begin{document}
\maketitle

\begin{abstract}
We study the problem of approximate ranking from observations of pairwise interactions. The goal is to estimate the underlying ranks of $n$ objects from data through interactions of comparison or collaboration. Under a general framework of approximate ranking models, we characterize the exact optimal statistical error rates of estimating the underlying ranks. We discover important phase transition boundaries of the optimal error rates. Depending on the value of the signal-to-noise ratio (SNR) parameter, the optimal rate, as a function of SNR, is either trivial, polynomial, exponential or zero. The four corresponding regimes thus have completely different error behaviors. To the best of our knowledge, this phenomenon, especially the phase transition between the polynomial and the exponential rates, has not been discovered before.
\smallskip

\textbf{Keywords}: minimax rate, permutation, sorting, pairwise comparison, latent space.
\end{abstract}


\section{Introduction}

Given data $\{X_{ij}\}_{1\leq i\neq j\leq n}$, we study recovery of the underlying ranks of the $n$ objects in the paper. The observation $X_{ij}$ can be interpreted as the outcome of an interaction between $i$ and $j$. For example, in sports, $X_{ij}$ can be the match result of a game between team $i$ and team $j$. In a coauthorship network, $X_{ij}$ can be the number of scientific papers jointly written by author $i$ and author $j$. We consider a very general approximate ranking model in the paper. It imposes the mean structure
$$\mathbb{E}X_{ij}=\mu_{r(i)r(j)}.$$
Here $r(i)\in\{1,2,...,n\}$ is the rank of object $i$. The interaction outcome $\mu_{r(i)r(j)}$ is only determined by the latent positions of $i$ and $j$, and $X_{ij}$ is thus a noisy measurement of $\mu_{r(i)r(j)}$. The goal of approximate ranking is to recover the underlying $r(i)$ for each $i\in\{1,2,...,n\}$.

In the literature, the problem of \textit{exact} ranking is a well studied topic, especially in the settings of pairwise comparison with Bernoulli outcomes. The goal of exact ranking assumes that the underlying $r$ is a permutation, and therefore estimating $r$ is equivalent to sorting the $n$ objects, which gives an alternative name ``noisy sorting" to such a problem \citep{braverman2008noisy}. We refer the readers to \cite{negahban2012iterative,shah2015simple,shah2016stochastically,mao2017minimax} and references therein for recent developments in this area.

In contrast, this paper studies the \textit{approximate} ranking problem. We do not impose the constraint that $r$ must be a permutation. More generally, we allow any $r$ that satisfies $r(i)\in\{1,2,...,n\}$ for each entry plus some moment conditions. This allows possible ties in the rank, and the ranks of the $n$ objects do not necessarily start from $1$ or end at $n$. The number $r(i)$ should instead be interpreted as a discrete latent position of object $i$. Such an approximate ranking setting is more natural for many applications. For example, it is conceivable in certain situations that there is a subset of objects that may behave very similarly through pairwise interactions. As a consequence, we can allow the same value $r(i)$ for all $i$ in the group in such a scenario. Moreover, the numbers $r(i)$'s in the approximate ranking setting not only reflect the order of the $n$ objects, but they also carry information about their relative differences through the interpretation of latent positions. These features and advantages distinguish the approximate ranking problem from the exact ranking problem studied in the literature.

The main contribution of the paper is the exact characterization of the optimal statistical error of the approximate ranking problem. Given an estimator $\hat{r}$, we measure the error through the loss function $\ell_2(\hat{r},r)=\frac{1}{n}\sum_{i=1}^n(\hat{r}(i)-r(i))^2$. With the signal parameter $\beta^2$ and the noise level $\sigma^2$ defined later in (\ref{eq:signal}) and (\ref{eq:error}), we show that the optimal rate of $\ell_2(\hat{r},r)$ is a function of the signal-to-noise ratio parameter $\text{SNR}=\frac{n\beta^2}{4\sigma^2}$. 
Our results are summarized in Figure \ref{fig:laoluan}.
\begin{figure}[h]\label{fig:laoluan}
\centering
\includegraphics[width=0.65\textwidth]{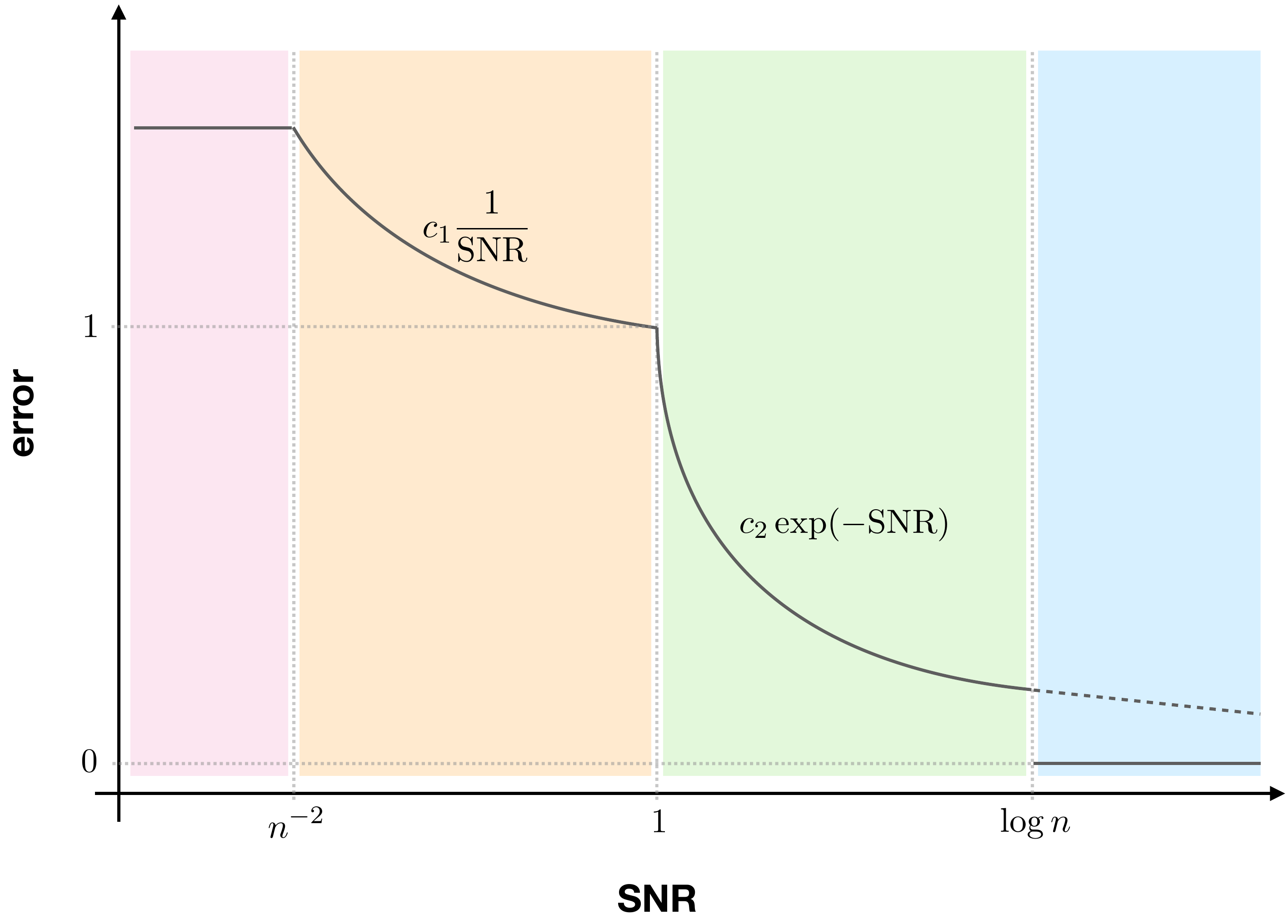}
\caption{Optimal error rate with respect to $\ell_2(\hat{r},r)$ as a function of SNR.}
\end{figure}
According to the plot in Figure \ref{fig:laoluan}, the optimal error exhibits interesting and delicate phase transition phenomena. Depending on the value of SNR, the optimal rate falls into four different regimes. In the first regime where the SNR is smaller than $n^{-2}$, the rate of $\ell_2(\hat{r},r)$ trivially takes the order of $\Theta(n^2)$, which is the largest possible value of $\ell_2(\hat{r},r)$. Next, as long as the SNR is greater than $n^{-2}$, the optimal rate starts to decrease polynomially fast. After the SNR passes the threshold of $1$, the optimal error accelerates to an exponential rate. In the final regime where $\text{SNR}>\log n$, we achieve exact recovery and $\ell_2(\hat{r},r)=0$ with high probability. The dashed curve in the final regime is the error in expectation, which still decreases at the rate of $\exp\left(-\text{SNR}\right)$. Besides the loss function $\ell_2(\hat{r},r)$, optimal rates and similar phase transition boundaries are also derived for the loss $\ell_1(\hat{r},r)=\frac{1}{n}\sum_{i=1}^n|\hat{r}(i)-r(i)|$.

The phase transition between the polynomial rate and the exponential rate is remarkable. To the best knowledge of the author, this is a new phenomenon first discovered in the approximate ranking problem. Mathematically speaking, the polynomial rate is driven by an entropy calculation argument, and when $\text{SNR}<1$, estimating $r$ is like estimating a continuous parameter in $\mathbb{R}^n$. In comparison, when $\text{SNR}>1$, the discrete nature of $r$ starts to come into effect, and we have sufficient information thanks to a high SNR to distinguish between $r(i)$ and its neighboring values for each $i$, which contributes to the exponential rate.

The paper is organized as follows. In Section \ref{sec:main}, we introduce the approximate ranking model, and then present the main results on optimal rates and phase transitions. In Section \ref{sec:alg}, we consider two special cases of the approximate ranking model, and derive optimal procedures and adaptive algorithms that can achieve the optimal rates. We then discuss a few related topics to our paper in Section \ref{sec:disc}. All proofs are given in Section \ref{sec:proof}.

We close this section by introducing notations that will be used later. For $a,b\in\mathbb{R}$, let $a\vee b=\max(a,b)$ and $a\wedge b=\min(a,b)$. For an integer $m$, $[m]$ denotes the set $\{1,2,...,m\}$. Given a set $S$, $|S|$ denotes its cardinality, and $\mathbb{I}_S$ is the associated indicator function. We use $\mathbb{P}$ and $\mathbb{E}$ to denote generic probability and expectation whose distribution is determined from the context. For two positive sequences $\{a_n\}$ and $\{b_n\}$, we use the notation $a_n\lesssim b_n$ if $a_n\leq Cb_n$ for all $n$ with some consntant $C>0$ independent of $n$. Finally, for two probability measures $\mathbb{P}$ and $\mathbb{Q}$, the Kullback-Leibler divergence is defined as $D(\mathbb{P}\|\mathbb{Q})=\int \log\frac{d\mathbb{P}}{d\mathbb{Q}}d\mathbb{P}$.

\section{Main Results}\label{sec:main}

\subsection{The Approximate Ranking Model}\label{sec:model}

Consider $n$ objects with ranks $r(1),r(2),...,r(n)\in[n]$.
We observe $\{X_{ij}\}_{1\leq i\neq j\leq n}$ that follow the generating process
\begin{equation}
X_{ij}=\mu_{r(i)r(j)}+Z_{ij}.\label{eq:ARM}
\end{equation}
In other words, the outcome $X_{ij}$ is determined by a noisy version of $\mu_{r(i)r(j)}$, which solely depends on the ranks of $i$ and $j$. In this paper, we consider $Z_{ij}$'s with sub-Gaussian tails. In particular, we assume that for any $t>0$,
\begin{equation}
\mathbb{P}\left(\sum_{1\leq i\neq j\leq n}v_{ij}Z_{ij}>t\right) \leq \exp\left(-\frac{t^2}{2\sigma^2}\right),\quad\text{for all }\sum_{1\leq i\neq j\leq n}v_{ij}^2=1.\label{eq:error}
\end{equation}

The goal of this paper is to recover the underlying ranks $r=(r(1),r(2),...,r(n))$ from the observations $\{X_{ij}\}_{1\leq i\neq j\leq n}$. We consider the following space of ranks
$$\mathcal{R}=\left\{r\in[n]^n:\left|\sum_{i=1}^nr(i)-\sum_{i=1}^n i\right|\leq c_n\right\},$$
where the number $c_n$ satisfies $1\leq c_n=o(\sqrt{n})$. The flexibility of the space $\mathcal{R}$ allows ties. To be more precise, an $r\in\mathcal{R}$ should be interpreted as discrete latent positions of the $n$ objects. Therefore, we refer to the problem as approximate ranking. A family of loss functions are considered in this paper. We define
$$
\ell_q(\hat{r},r)=\begin{cases}\frac{1}{n}\sum_{i=1}^n\mathbb{I}\{\wh{r}(i)\neq r(i)\}, & q=0, \\
\frac{1}{n}\sum_{i=1}^n|\hat{r}(i)-r(i)|^q, & q\in(0,2].
\end{cases}
$$
When $q=0$, the loss function $\ell_0(\hat{r},r)$ is the normalized Hamming distance between $\hat{r}$ and $r$. It measures the proportion of objects that are given incorrect ranks. Compared with $\ell_0(\hat{r},r)$, $\ell_q(\hat{r},r)$ with a $q\in(0,2]$ also measures the magnitude of the incorrectness of each $\hat{r}(i)$. In particular, the choice of $q=1$ leads to $\ell_1(\hat{r},r)=\frac{1}{n}\sum_{i=1}^n|\hat{r}(i)-r(i)|$, which is known to be equivalent to Kendall's tau within a factor of $2$ \citep{kumar2010generalized}. 

Our model $\mu_{r(i)r(j)}$ is quite general. It characterizes the pairwise relation between the two objects $i$ and $j$ through their ranks. The literature is popularized with pairwise comparison models. In such a setting, the value of $\mu_{r(i)r(j)}$ is an increasing function of the difference between $r(i)$ and $r(j)$. We are also interested in the pairwise collaboration setting, where a larger value of $\mu_{r(i)r(j)}$ is implied by either or both of the values of $r(i)$ and $r(j)$. Without specifying a particular setting, we impose the following general assumption. There exists a number $\beta\in\mathbb{R}$, such that for any $r,\tilde{r}\in\mathcal{R}$,
\begin{equation}
\sum_{1\leq i\neq j\leq n}\left(\mu_{\tilde{r}(i)\tilde{r}(j)}-\mu_{r(i)r(j)}\right)^2 \geq 2n\beta^2\|\tilde{r}-r\|^2.\label{eq:signal}
\end{equation}
Later in Section \ref{sec:alg}, various examples will be given to satisfy this condition.

\subsection{Minimax Rates}\label{sec:minimax}

With the observations $\{X_{ij}\}_{1\leq i\neq j\leq n}$ and the knowledge of $\mu_{r(i)r(j)}$, we consider a least-squares estimator
\begin{equation}
\hat{r}=\argmin_{r\in\mathcal{R}}\sum_{1\leq i\neq j\leq n}\left(X_{ij}-\mu_{r(i)r(j)}\right)^2.\label{eq:LSE}
\end{equation}
This estimator may not be computationally efficient and it depends on the model parameters, but it serves as an important benchmark of approximate ranking. Adaptive procedures with unknown model parameters will be discussed in Section \ref{sec:alg}.

Use $\mathbb{P}_r$ and $\mathbb{E}_r$ to denote the distribution of (\ref{eq:ARM}), and the performance of the least-squares estimator $\hat{r}$ is characterized by the following theorem.
\begin{thm}\label{thm:upper1}
Consider any loss $\ell(\hat{r},r)$ with $q\in[0,2]$.
Under the conditions (\ref{eq:error}) and (\ref{eq:signal}), we have
$$\sup_{r\in\mathcal{R}}\mathbb{E}_r\ell_q(\hat{r},r)\lesssim \begin{cases}
\exp\left(-(1+o(1))\frac{n\beta^2}{4\sigma^2}\right), & \frac{n\beta^2}{4\sigma^2}>1, \\
\left\{\left(\frac{n\beta^2}{4\sigma^2}\right)^{-1}\log\left[\left(\frac{n\beta^2}{4\sigma^2}\right)^{-1}\right]\right\}^{q/2}\wedge n^q, & \frac{n\beta^2}{4\sigma^2}\leq 1,
\end{cases}$$
where $o(1)$ denotes a vanishing sequence as $n\rightarrow\infty$.
\end{thm}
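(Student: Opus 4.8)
The plan is to start from the optimality of the least-squares estimator in (\ref{eq:LSE}). For the true rank $r$, the basic inequality $\sum_{i\neq j}(X_{ij}-\mu_{\hat{r}(i)\hat{r}(j)})^2\le\sum_{i\neq j}(X_{ij}-\mu_{r(i)r(j)})^2$ combined with $X_{ij}=\mu_{r(i)r(j)}+Z_{ij}$ yields, writing $\Delta_{ij}(r')=\mu_{r'(i)r'(j)}-\mu_{r(i)r(j)}$, the self-bounding relation $\sum_{i\neq j}\Delta_{ij}^2\le 2\sum_{i\neq j}Z_{ij}\Delta_{ij}$ on the event $\{\hat{r}=r'\}$. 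Rewriting the right side as $2\|\Delta(r')\|\langle Z,\Delta(r')/\|\Delta(r')\|\rangle$ and applying the sub-Gaussian condition (\ref{eq:error}) to the \emph{fixed} unit direction $\Delta(r')/\|\Delta(r')\|$ gives, for each candidate $r'$, the bound $\mathbb{P}_r(r'\text{ beats }r)\le\exp(-\|\Delta(r')\|^2/(8\sigma^2))$; the signal condition (\ref{eq:signal}) then converts this into $\mathbb{P}_r(\hat{r}=r')\le\exp(-\text{SNR}\,\|r'-r\|^2)$ with $\text{SNR}=n\beta^2/(4\sigma^2)$. This single inequality is the common engine for both regimes.

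For the polynomial regime $\text{SNR}\le 1$ I would first reduce all losses to $q=2$. Since $t\mapsto t^{q/2}$ is concave for $q\in[0,2]$, Jensen's inequality gives $\ell_q(\hat{r},r)\le\ell_2(\hat{r},r)^{q/2}$ and hence $\mathbb{E}_r\ell_q\le(\mathbb{E}_r\ell_2)^{q/2}$; together with the deterministic bound $\ell_2\le n^2$ this already produces the claimed shape $\{\cdots\}^{q/2}\wedge n^q$, so it suffices to prove $\mathbb{E}_r\ell_2\lesssim\text{SNR}^{-1}\log(\text{SNR}^{-1})\wedge n^2$. I would obtain this by a peeling argument on $\hat{\delta}=\|\hat{r}-r\|^2$: partition $\{\hat{\delta}\ge\delta_0\}$ into dyadic shells $2^{\ell}\delta_0\le\|r'-r\|^2<2^{\ell+1}\delta_0$, control the maximal noise on each shell by a union bound over the lattice points it contains, and balance the exponent $\text{SNR}\cdot 2^{\ell}\delta_0$ against the local entropy $\log|\{r'\in\mathcal{R}:\|r'-r\|^2\le 2^{\ell+1}\delta_0\}|$. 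The delicate point is the entropy accounting: a crude count of all configurations at a given radius produces a spurious $\log n$ factor, and extracting the sharp $\log(\text{SNR}^{-1})$ requires using the local metric entropy at the self-consistent scale $\delta_0\asymp n\,\text{SNR}^{-1}\log(\text{SNR}^{-1})$, with the truncation to $\mathcal{R}$ controlling the boundary.

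For the exponential regime $\text{SNR}>1$ the losses must be treated directly, because passing through $\ell_2$ would replace the leading constant $1$ by $q/2$. I would write $\mathbb{E}_r\ell_q=\frac1n\sum_i\mathbb{E}_r|\hat{r}(i)-r(i)|^q$ and show that the dominant contribution comes from single-coordinate deviations: comparing $r$ with the configuration that shifts coordinate $i$ by $m$, the per-move bound $\exp(-\text{SNR}\,m^2)$ aggregated as $\sum_{m\ne 0}|m|^q\exp(-\text{SNR}\,m^2)$ is governed by $m=\pm1$ and equals $(1+o(1))\cdot 2\exp(-\text{SNR})$ uniformly in $q\in[0,2]$, which is exactly the target rate. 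The difficulty is that a naive union bound over all configurations overcounts multi-coordinate deviations and diverges once $\text{SNR}<\log n$. I would therefore first establish a global confinement step showing that $\mathbb{P}_r(\|\hat{r}-r\|^2\ge t)$ decays fast enough (via the same engine and the entropy balance) to pin $\hat{r}$ to a neighborhood of $r$ with overwhelming probability, then run the coordinatewise expansion on that event so that joint deviations in two or more coordinates contribute only $o(\exp(-\text{SNR}))$.

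The main obstacle is the exponential regime in the intermediate range $1<\text{SNR}<\log n$: there the sharp constant in $\exp(-(1+o(1))\text{SNR})$ cannot be read off from a union bound, and one must argue that the least-squares minimizer does not actually realize the exponentially many competing multi-coordinate configurations that the union bound would charge. Making the interplay between the global confinement step and the local single-coordinate expansion quantitative enough to isolate the leading term is the crux; a secondary but genuine difficulty is securing the $\log(\text{SNR}^{-1})$ factor, rather than $\log n$, in the polynomial regime through the localized entropy computation described above.
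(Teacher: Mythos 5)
Your ``engine'' is exactly the paper's Lemma~\ref{lem:dev} (the per-candidate bound $\mathbb{P}_r(L(\tilde r)\le L(r))\le\exp(-\frac{n\beta^2}{4\sigma^2}\|\tilde r-r\|^2)$ obtained from the basic inequality, the sub-Gaussian condition applied to the fixed direction, and \prettyref{eq:signal}), and your polynomial-regime plan --- reduce to $q=2$ by Jensen, then union-bound over shells $\mathcal{R}_m$ using a local count of the form $|\mathcal{R}_m|\le(8em/n)^n$ and balance the entropy $n\log(m/n)$ against $\frac{n\beta^2}{4\sigma^2}m$ at the scale $m_0\asymp n\,\mathrm{SNR}^{-1}\log(\mathrm{SNR}^{-1})$, where $\mathrm{SNR}=\frac{n\beta^2}{4\sigma^2}$ --- is precisely what the paper does via Lemma~\ref{lem:card} and Cases 3--4 of its proof. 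That half of the proposal is sound.

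The gap is in the exponential regime, exactly where you place the ``crux.'' Two points. First, the reduction of $\ell_q$ to $\ell_2$ there is immediate: $\hat r(i)-r(i)$ is an integer, so $|\hat r(i)-r(i)|^q\le|\hat r(i)-r(i)|^2$ pointwise for every $q\in[0,2]$, hence $\ell_q(\hat r,r)\le\ell_2(\hat r,r)$ with the leading constant intact; no direct coordinatewise treatment of $\ell_q$ is needed. Second, and more seriously, your ``global confinement plus single-coordinate expansion'' program is the hard part you have not supplied, and it is not needed. The paper's resolution of the range $1<\mathrm{SNR}<\log n$ exploits that the target is an \emph{expectation} of a normalized loss rather than a probability: write $\mathbb{E}_r\|\hat r-r\|^2\le m_0+\sum_{m>m_0}m\,|\mathcal{R}_m|\exp(-\frac{n\beta^2}{4\sigma^2}m)$ with a truncation level $m_0$ of order $n\exp(-\frac{n\beta^2}{4\sigma^2})$. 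The small-$m$ part of the union bound --- the part that indeed diverges when $\mathrm{SNR}\ll\log n$ --- is never invoked; the event $\{\|\hat r-r\|^2\le m_0\}$ is simply charged the deterministic bound $m_0$, and $m_0/n=\exp(-(1+o(1))\frac{n\beta^2}{4\sigma^2})$ is already the claimed rate. For $m>m_0$ the shell entropy $m\log(2e^2n/m)$ is dominated by $\frac{n\beta^2}{4\sigma^2}m$, so the tail sum is $O(m_0)$. Thus no confinement event, no accounting of multi-coordinate configurations, and no isolation of the $m=\pm1$ moves is required; the union bound does yield the sharp constant once the expectation is truncated at the right scale. Without this device (or an actual execution of your confinement program, which you acknowledge is open), the proposal does not establish the exponential rate for $1<\frac{n\beta^2}{4\sigma^2}<\log n$.
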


Theorem \ref{thm:upper1} reveals an important quantity of signal-to-noise ratio. It is in the form of $\frac{n\beta^2}{4\sigma^2}$. When $\frac{n\beta^2}{4\sigma^2}>1$, the ranking error converges to zero exponentially fast. In comparison, when $\frac{n\beta^2}{4\sigma^2}\leq 1$, the error has a polynomial rate depending on $q$ and capped at the order of $n^q$.

Note that the rate involves a logarithmic factor in the polynomial regime. This factor can be removed with an extra assumption on the model. Given a constant $M$ that satisfies $1<M=O(1)$, we assume that
for any $r,\tilde{r}\in\mathcal{R}$,
\begin{equation}
\sum_{1\leq i\neq j\leq n}\left(\mu_{\tilde{r}(i)\tilde{r}(j)}-\mu_{r(i)r(j)}\right)^2 \leq 2Mn\beta^2\|\tilde{r}-r\|^2.\label{eq:signal-entropy}
\end{equation}
The following theorem gives this improvement.

\begin{thm}\label{thm:upper2}
Consider any loss $\ell(\hat{r},r)$ with $q\in[0,2]$.
Under the conditions (\ref{eq:error}), (\ref{eq:signal}) and (\ref{eq:signal-entropy}), we have
$$\sup_{r\in\mathcal{R}}\mathbb{E}_r\ell_q(\hat{r},r)\lesssim \begin{cases}
\exp\left(-(1+o(1))\frac{n\beta^2}{4\sigma^2}\right), & \frac{n\beta^2}{4\sigma^2}>1, \\
\left(\frac{n\beta^2}{4\sigma^2}\right)^{-q/2}\wedge n^q, & \frac{n\beta^2}{4\sigma^2}\leq 1,
\end{cases}$$
where $o(1)$ denotes a vanishing sequence as $n\rightarrow\infty$.
\end{thm}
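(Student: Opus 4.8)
The plan is to reduce the whole statement to a sharp bound on $\|\hat r-r\|^2$ and then to isolate the single place where the extra hypothesis (\ref{eq:signal-entropy}) buys the removal of the logarithmic factor present in Theorem \ref{thm:upper1}. First I would dispose of the routine reductions. For $q=0$ the claimed bound is $1\wedge1=1$, which holds trivially since $\ell_0\le1$. For $q\in(0,2]$, concavity of $t\mapsto t^{q/2}$ gives $\ell_q(\hat r,r)=\frac1n\sum_i(|\hat r(i)-r(i)|^2)^{q/2}\le\ell_2(\hat r,r)^{q/2}$, and Jensen's inequality then yields $\mathbb E_r\ell_q\le(\mathbb E_r\ell_2)^{q/2}$; combined with the deterministic cap $\ell_q\le n^q$ coming from $\hat r,r\in[n]^n$, it suffices to prove $\sup_r\mathbb E_r\ell_2(\hat r,r)\lesssim(\text{SNR})^{-1}\wedge n^2$ in the regime $\text{SNR}=\frac{n\beta^2}{4\sigma^2}\le1$. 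The exponential regime $\text{SNR}>1$ does not use (\ref{eq:signal-entropy}) and is already covered by Theorem \ref{thm:upper1}. Note that in the polynomial regime the target $\tfrac1n\|\hat r-r\|^2\lesssim\sigma^2/(n\beta^2)$ forces $\|\hat r-r\|^2$ to be of order $n$ or larger, so the entire analysis lives at scales $\|\hat r-r\|\gtrsim\sqrt n$.

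Writing $\Delta_{ij}(r')=\mu_{r'(i)r'(j)}-\mu_{r(i)r(j)}$ and expanding the definition (\ref{eq:LSE}) with $X_{ij}=\mu_{r(i)r(j)}+Z_{ij}$, the noise cancels to give the basic inequality $\sum_{i\ne j}\Delta_{ij}(\hat r)^2\le 2\sum_{i\ne j}Z_{ij}\Delta_{ij}(\hat r)$. Condition (\ref{eq:signal}) lower bounds the left-hand side by $2n\beta^2\|\hat r-r\|^2$, while (\ref{eq:signal}) and (\ref{eq:signal-entropy}) together show that $r'\mapsto(\Delta_{ij}(r'))_{i\ne j}$ is a bi-Lipschitz embedding of $(\mathcal R,\|\cdot\|)$ into $\mathbb R^{n(n-1)}$, namely $2n\beta^2\|r'-r\|^2\le\|\Delta(r')\|^2\le 2Mn\beta^2\|r'-r\|^2$. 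It is exactly this two-sided control, \emph{absent} in Theorem \ref{thm:upper1}, that lets me transport covering estimates back and forth between the lattice $\mathcal R$ and the signal set $T=\{\Delta(r'):r'\in\mathcal R\}$. The task then becomes to control $\sup\{\sum_{i\ne j}Z_{ij}\theta_{ij}:\theta\in T,\ \|\theta\|\le R\}$ at the relevant radius $R$, where by (\ref{eq:signal}) any $\theta=\Delta(r')$ with $\|\theta\|\le R$ has $\|r'-r\|\le R/(\sqrt{2n}\beta)=:\rho$.

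The decisive step, and the reason the logarithmic factor disappears, is to bound this supremum by chaining (Dudley's entropy integral) rather than by a single union bound over a net at scale $R$. Condition (\ref{eq:error}) says precisely that $\theta\mapsto\sum_{i\ne j}Z_{ij}\theta_{ij}$ is a sub-Gaussian process with increment metric $\sigma\|\cdot\|$, so Dudley's inequality bounds the expected supremum by $\sigma\int_0^R\sqrt{\log N(T\cap B_R,\epsilon)}\,d\epsilon$. Using the upper bound (\ref{eq:signal-entropy}) I would push an $\epsilon/(\sqrt{2Mn}\beta)$-net of the lattice ball $\{r':\|r'-r\|\le\rho\}$ forward to an $\epsilon$-net of $T\cap B_R$, giving $\log N(T\cap B_R,\epsilon)\le\log N_{\mathcal R}(\rho,\epsilon/(\sqrt{2Mn}\beta))$. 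The key quantitative point is that, because $\mathcal R\subset\mathbb Z^n$ is discrete, this lattice covering number is at most $n\log(3\rho/\epsilon')$ at net-scale $\epsilon'\ge1$ and \emph{saturates} at the finite value $\log|\mathcal R\cap B(r,\rho)|\lesssim\frac n2\log(\rho^2/n)$ for all smaller scales; it never blows up as $\epsilon'\to0$. Consequently the entropy integral converges to $O(\sqrt n\,R)$ with no logarithmic loss, in contrast to a crude single-scale union bound, which would pay an extra factor $\sqrt{\log(\text{SNR}^{-1})}$ and reproduce the rate of Theorem \ref{thm:upper1}. Solving the resulting fixed-point inequality $R^2\lesssim\sigma\sqrt n\,R$ gives $\|\Delta(\hat r)\|\lesssim\sigma\sqrt n$, hence $\|\hat r-r\|^2\lesssim\sigma^2/\beta^2$ and $\ell_2\lesssim\sigma^2/(n\beta^2)\asymp\text{SNR}^{-1}$.

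Finally I would upgrade this to the in-expectation statement by peeling over dyadic radii $R\in[2^k,2^{k+1})$ and using the sub-Gaussian concentration of each supremum (variance proxy $\sigma^2R^2$) to produce an exponentially decaying tail for $\|\hat r-r\|^2/(\sigma^2/\beta^2)$, which integrates to the claimed bound; the deterministic diameter bound $\|\hat r-r\|^2\le n\cdot n^2$ supplies the cap $n^2$. I expect the main obstacle to be the uniform local-entropy estimate above: one must verify $\log N_{\mathcal R}(\rho,\epsilon')\lesssim n\log(2+\rho/\epsilon')$ at every scale and, crucially, that $\int_0^\rho\sqrt{n\log(2+\rho/\epsilon')}\,d\epsilon'\lesssim\sqrt n\,\rho$ — that is, that the discreteness of $\mathcal R$ truncates the integral before the $\sqrt{\log}$ can accumulate into a genuine $\sqrt{\log(\text{SNR}^{-1})}$ factor. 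Pinning down the constants in this cutoff, uniformly over the admissible range $\sqrt n\lesssim\rho\lesssim n^{3/2}$ of radii, is the delicate part; everything else is bookkeeping around the basic inequality.
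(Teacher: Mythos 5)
Your proposal is correct and follows essentially the same route as the paper: the polynomial regime is handled by the basic inequality plus a chaining (Dudley entropy-integral) bound on $\sup|\langle Z,\mu(r)-\mu(\tilde r)\rangle|$ over shells, with the two-sided condition (\ref{eq:signal-entropy}) transporting the increment metric to $\|\tilde r-r\|$, followed by peeling, tail integration, and the reduction $\mathbb{E}_r\ell_q\le(\mathbb{E}_r\ell_2)^{q/2}$ --- this is exactly Lemma \ref{lem:uniform} and its use in the paper's proof. One small correction: the bound $\int_0^\rho\sqrt{n\log(2+\rho/\epsilon)}\,d\epsilon\lesssim\sqrt n\,\rho$ needs no truncation coming from the discreteness of $\mathcal R$, since $\sqrt{\log(1/u)}$ is already integrable at $0$ and the standard volume-ratio entropy bound for a Euclidean ball suffices, so the step you flag as the delicate obstacle is in fact routine.
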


The rates given by Theorem \ref{thm:upper2} are sharp, and they cannot be further improved. We give matching lower bounds in the next theorem.
\begin{thm}\label{thm:lower}
Consider any loss $\ell(\hat{r},r)$ with $q\in[0,2]$.
There exists a distribution of (\ref{eq:ARM}) that satisfies (\ref{eq:error}), (\ref{eq:signal}) and (\ref{eq:signal-entropy}), such that
$$\inf_{\hat{r}}\sup_{r\in\mathcal{R}}\mathbb{E}_r\ell_q(\hat{r},r)\gtrsim \begin{cases}
\exp\left(-(1+o(1))\frac{n\beta^2}{4\sigma^2}\right), & \frac{n\beta^2}{4\sigma^2}>1, \\
\left(\frac{n\beta^2}{4\sigma^2}\right)^{-q/2}\wedge n^q, & \frac{n\beta^2}{4\sigma^2}\leq 1,
\end{cases}$$
where $o(1)$ denotes a vanishing sequence as $n\rightarrow\infty$.
\end{thm}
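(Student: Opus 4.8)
The plan is to establish matching lower bounds by constructing an explicit instance of the model and reducing the estimation problem to testing between carefully chosen rank vectors. The two regimes (exponential and polynomial) require different constructions, so I would handle them separately. For the exponential regime ($\text{SNR}>1$), the key is a two-point or local-perturbation argument: I would fix a reference rank vector $r$ and consider competitors $\tilde{r}$ that differ from $r$ in a single coordinate by $\pm 1$, i.e. $\tilde{r}(i)=r(i)\pm 1$ for one index $i$. For such a perturbation, $\|\tilde{r}-r\|^2=1$, and by Le Cam's two-point method the testing error between $\mathbb{P}_r$ and $\mathbb{P}_{\tilde{r}}$ is governed by the KL divergence, which under a Gaussian specification equals $\frac{1}{2\sigma^2}\sum_{i\neq j}(\mu_{\tilde{r}(i)\tilde{r}(j)}-\mu_{r(i)r(j)})^2$. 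Using the upper envelope (\ref{eq:signal-entropy}) this is at most $\frac{Mn\beta^2}{\sigma^2}$. The delicate part is getting the exact constant $\frac{n\beta^2}{4\sigma^2}$ in the exponent: a crude two-point bound gives the right exponential form but typically loses a constant factor, so I would instead use the Neyman--Pearson / likelihood-ratio testing lower bound more carefully, or better, exploit the structure of many independent single-coordinate tests. By choosing $r$ so that roughly $n$ coordinates can each be independently perturbed and aggregating via Fano-type or Assouad-type arguments over these coordinates, the per-coordinate error probability decays like $\exp(-(1+o(1))\frac{n\beta^2}{4\sigma^2})$, and averaging over the $n$ coordinates in $\ell_q$ preserves this rate.

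For the polynomial regime ($\text{SNR}\le 1$), I would use a global entropy/Fano argument rather than local perturbations. Here the idea is that with low SNR, estimating $r$ behaves like estimating a continuous $n$-dimensional parameter. I would construct a rich packing set $\{r^{(1)},\dots,r^{(N)}\}\subset\mathcal{R}$ that is well-separated in the $\ell_q$ metric but whose induced distributions $\mathbb{P}_{r^{(k)}}$ have small pairwise KL divergence. Concretely, I would take rank vectors built from a suitably rescaled grid so that the mutual separation in $\sum_i|r^{(k)}(i)-r^{(l)}(i)|^q$ is of order $n\delta^q$ for a chosen resolution $\delta$, while the KL divergence stays controlled by (\ref{eq:signal-entropy}) as $\lesssim \frac{n\beta^2}{\sigma^2}\delta^2$. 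Generalized Fano then yields a lower bound whenever the log-cardinality $\log N$ dominates the maximal KL; balancing the separation $\delta^q$ against the constraint $\frac{n\beta^2}{\sigma^2}\delta^2\lesssim\log N$ and optimizing over $\delta$ produces the rate $\left(\frac{n\beta^2}{4\sigma^2}\right)^{-q/2}$, capped at $n^q$ (the trivial maximal loss) when SNR is extremely small. The cap reflects the fact that $r$ lives in $[n]^n$, so no separation can exceed order $n$.

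The step I expect to be the main obstacle is \textbf{simultaneously} satisfying all three model conditions (\ref{eq:error}), (\ref{eq:signal}), and (\ref{eq:signal-entropy}) with the \emph{same} explicit construction of $\{\mu_{ab}\}$ and $\{Z_{ij}\}$, since the lower bound must exhibit a single distribution that is genuinely in the model class and yet forces the stated error. I would choose the cleanest possible realization — Gaussian noise $Z_{ij}\sim N(0,\sigma^2)$, which makes (\ref{eq:error}) an equality, and a comparison-type mean such as $\mu_{ab}=c\cdot(a-b)$ or a rescaled linear/bilinear form that makes both (\ref{eq:signal}) and (\ref{eq:signal-entropy}) tight with matching constants so that $M\to 1$ and no constant is lost. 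Verifying that the chosen packing and perturbation families stay inside $\mathcal{R}$, in particular respecting the moment constraint $|\sum_i r(i)-\sum_i i|\le c_n$, requires care: single-coordinate $\pm1$ perturbations must be paired (e.g. increment one coordinate, decrement another) to preserve the sum, which slightly complicates the Assouad bookkeeping but does not change the rate. The second obstacle is pinning the sharp constant $(1+o(1))\frac{n\beta^2}{4\sigma^2}$ in the exponential regime; this is where I would invest the most effort, matching the upper bound's constant by a tight analysis of the optimal likelihood-ratio test for distinguishing adjacent ranks.
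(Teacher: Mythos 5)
Your overall plan coincides with the paper's proof: the same Gaussian differential-comparison instance $X_{ij}\sim N(\tilde{\beta}(r(i)-r(j)),\sigma^2)$ (which makes (\ref{eq:error}) tight and yields (\ref{eq:signal}) and (\ref{eq:signal-entropy}) with $\beta=(1+o(1))\tilde{\beta}$, using $c_n=o(\sqrt{n})$ to kill the cross term $\bigl(\sum_i(r(i)-\tilde{r}(i))\bigr)^2$), a Varshamov--Gilbert packing plus Fano argument in the polynomial regime with exactly the balancing you describe, and in the exponential regime a reduction to $\ell_0$, a Bayes-risk decomposition over coordinates, and a per-coordinate Neyman--Pearson two-point test whose Gaussian tail gives the sharp constant.

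There is, however, one concrete misstep at precisely the point you flag as the crux. You assert that single-coordinate $\pm 1$ perturbations ``must be paired (e.g.\ increment one coordinate, decrement another) to preserve the sum,'' and that this ``does not change the rate.'' In the exponential regime it does change the rate: a paired perturbation has $\|\tilde{r}-r\|^2=2$, so the squared mean separation $\sum_{i\neq j}(\mu_{\tilde{r}(i)\tilde{r}(j)}-\mu_{r(i)r(j)})^2\approx 2n\beta^2\|\tilde{r}-r\|^2$ doubles, and the optimal two-point testing error becomes $\exp\bigl(-(1+o(1))\frac{n\beta^2}{2\sigma^2}\bigr)$ rather than $\exp\bigl(-(1+o(1))\frac{n\beta^2}{4\sigma^2}\bigr)$ --- you would lose the sharp constant you set out to match. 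The fix is that no pairing is needed: the constraint defining $\mathcal{R}$ is only $\bigl|\sum_i r(i)-\sum_i i\bigr|\leq c_n$ with $c_n\geq 1$, so an unpaired single-coordinate $\pm 1$ perturbation stays in $\mathcal{R}$. The paper exploits this by working inside the subclass $\tilde{\mathcal{R}}=\{r\in\mathcal{R}:|\sum_i r(i)-\sum_i i|\leq 1\}$, within which, for each fixed $r_{-i}$, there are three admissible values of $r(i)$ and one can select two of them at squared distance exactly $1$; the Neyman--Pearson error for that pair is $2\,\mathbb{P}\bigl(N(0,1)>\frac{\sqrt{n-1}\tilde{\beta}}{\sqrt{2}\sigma}\bigr)\gtrsim\exp\bigl(-(1+o(1))\frac{n\beta^2}{4\sigma^2}\bigr)$, which is the claimed bound. (Pairing, or the antisymmetrized construction $t(i)=-t(n-i)$, is harmless and indeed used in the polynomial regime, where constants in the exponent are irrelevant.) With that correction your argument matches the paper's.
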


\subsection{Exact Recovery and Phase Transitions}\label{sec:exact}

According to Theorem \ref{thm:upper1} and Theorem \ref{thm:upper2}, the convergence rate for $\ell_q(\hat{r},r)$ is $\exp\left(-(1+o(1))\frac{n\beta^2}{4\sigma^2}\right)$ when $\frac{n\beta^2}{4\sigma^2}>1$. Therefore, suppose $\frac{n\beta^2}{4\sigma^2}>\log n$, then the convergence rate will be smaller than $n^{-1}$. Since $\ell_q(\hat{r},r)$ does not take any value in the interval $(0,n^{-1})$, a convergence rate smaller than $n^{-1}$ is expected to imply exact recover of the underlying $r$. This intuition is made rigorous by the following theorem.
\begin{thm}\label{thm:exact}
Under the conditions (\ref{eq:error}) and (\ref{eq:signal}), if we further assume that
$\liminf_n\frac{n\beta^2}{4\sigma^2\log n}>1$,
then the LSE (\ref{eq:LSE}) satisfies
$$\inf_{r\in\mathcal{R}}\mathbb{P}_r(\hat{r}=r)\rightarrow 1,$$
as $n\rightarrow\infty$.
\end{thm}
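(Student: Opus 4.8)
\section*{Proof proposal for Theorem \ref{thm:exact}}

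The plan is to carry out a standard union-bound (or ``basic inequality'') analysis of the least-squares estimator, exploiting the fact that the squared loss is separable across coordinates. Throughout, write $r^*$ for the true rank, set $\lambda=\frac{n\beta^2}{4\sigma^2}$ for the signal-to-noise ratio, and abbreviate $\mu^*_{ij}=\mu_{r^*(i)r^*(j)}$ and $\mu^r_{ij}=\mu_{r(i)r(j)}$. Since $\hat r$ minimizes the objective in (\ref{eq:LSE}) over $\mathcal R$, the event $\{\hat r\neq r^*\}$ is contained in $\bigcup_{r\in\mathcal R,\,r\neq r^*}\{F(r)\le F(r^*)\}$, where $F(r)=\sum_{1\le i\neq j\le n}(X_{ij}-\mu^r_{ij})^2$. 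Expanding $X_{ij}=\mu^*_{ij}+Z_{ij}$ gives
$$F(r)-F(r^*)=\sum_{i\neq j}(\mu^*_{ij}-\mu^r_{ij})^2-2\sum_{i\neq j}Z_{ij}(\mu^r_{ij}-\mu^*_{ij}),$$
so, writing $D_r^2=\sum_{i\neq j}(\mu^*_{ij}-\mu^r_{ij})^2$, the bad event $\{F(r)\le F(r^*)\}$ is exactly $\{\sum_{i\neq j}v_{ij}Z_{ij}\ge D_r/2\}$ with $v_{ij}=(\mu^r_{ij}-\mu^*_{ij})/D_r$ of unit Euclidean norm.

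First I would bound each bad event. The sub-Gaussian tail (\ref{eq:error}) with $t=D_r/2$ gives $\mathbb P_{r^*}(F(r)\le F(r^*))\le\exp(-D_r^2/(8\sigma^2))$, and the signal condition (\ref{eq:signal}) gives $D_r^2\ge2n\beta^2\|r-r^*\|^2$. Combining them, each competitor beats $r^*$ with probability at most $\exp(-\lambda\|r-r^*\|^2)$, so a union bound yields
$$\mathbb P_{r^*}(\hat r\neq r^*)\le\sum_{r\in\mathcal R,\,r\neq r^*}\exp\left(-\lambda\|r-r^*\|^2\right)\le\sum_{r\in[n]^n}\exp\left(-\lambda\|r-r^*\|^2\right)-1,$$
where I enlarge the index set to all of $[n]^n$ (legitimate since every summand is positive) and peel off the $r=r^*$ term.

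The crux is to show this last quantity is $o(1)$, and here separability of $\|r-r^*\|^2=\sum_i(r(i)-r^*(i))^2$ is decisive. Extending each coordinate offset $d_i=r(i)-r^*(i)$ to range over all of $\mathbb Z$ (again only inflating the sum) factorizes it as $\prod_{i=1}^n\bigl(1+2\sum_{d\ge1}e^{-\lambda d^2}\bigr)$, and the elementary bound $d^2\ge d$ turns the inner sum into a geometric series of order $e^{-\lambda}$. Hence the full sum is at most $\exp(Cn\,e^{-\lambda})$ for an absolute constant $C$, giving $\mathbb P_{r^*}(\hat r\neq r^*)\le\exp(Cn\,e^{-\lambda})-1$. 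Under $\liminf_n\frac{\lambda}{\log n}>1$ there is $\epsilon>0$ with $\lambda\ge(1+\epsilon)\log n$ for large $n$, so $n\,e^{-\lambda}\le n^{-\epsilon}\to0$; since this bound does not depend on $r^*$, the probability tends to $0$ uniformly over $\mathcal R$.

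The main obstacle---really the one idea that pushes the threshold down from $n\log n$ to $\log n$---is this factorization. A naive union bound over all $n^n$ candidates would force $\lambda\gtrsim n\log n$; the separable structure instead reveals that the error sum is dominated by the $\approx2n$ single-coordinate $\pm1$ perturbations of $r^*$, each misleading the LSE with probability $\approx e^{-\lambda}$, so that only $n\,e^{-\lambda}\to0$, i.e.\ $\lambda>\log n$, is required. The remaining points are routine: enlarging $\mathcal R$ to $[n]^n$ and $d_i$ to $\mathbb Z$ only increases the bound by positivity of the summands, and the $\ge$ versus $>$ discrepancy in applying (\ref{eq:error}) to the closed half-line is harmless, since the closed-half-line tail $\mathbb P(\cdot\ge t)\le\inf_{\delta>0}\mathbb P(\cdot>t-\delta)$ obeys the same sub-Gaussian bound.
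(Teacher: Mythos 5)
Your proof is correct, and it reaches the conclusion by a genuinely different route than the paper. The paper derives Theorem \ref{thm:exact} in two lines from Theorem \ref{thm:upper1}: since $\ell_0(\hat r,r)$ cannot lie in $(0,n^{-1})$, Markov's inequality gives $\mathbb P_r(\hat r\neq r)\le n\,\mathbb E_r\ell_0(\hat r,r)\le n\exp\left(-(1+o(1))\tfrac{n\beta^2}{4\sigma^2}\right)\le n^{-(1+o(1))\delta}$, where the expected-loss bound itself comes from the shell decomposition $\mathcal R=\cup_m\mathcal R_m$ together with the per-competitor bound of Lemma \ref{lem:dev} and the cardinality bound of Lemma \ref{lem:card}. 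You use the same per-competitor estimate $\exp\left(-\lambda\|r-r^*\|^2\right)$ (which is exactly Lemma \ref{lem:dev}), but you replace the entropy bookkeeping of Lemma \ref{lem:card} by the coordinate-wise factorization $\sum_{d\in\mathbb Z^n}e^{-\lambda\|d\|^2}=\bigl(1+2\sum_{d\ge1}e^{-\lambda d^2}\bigr)^n\le e^{Cne^{-\lambda}}$, which is available here precisely because condition (\ref{eq:signal}) controls every competitor purely through the separable quantity $\|r-r^*\|^2$. What each approach buys: the paper's route is essentially free given that Theorem \ref{thm:upper1} is needed anyway, and the Markov factor of $n$ is absorbed into the $(1+o(1))$ because $\lambda>\log n$; your route is self-contained, avoids Lemma \ref{lem:card} and the four-case analysis entirely, and makes transparent that the failure probability is dominated by the $\approx 2n$ single-coordinate $\pm1$ perturbations, each contributing $\approx e^{-\lambda}$ --- which is exactly the structure exploited in the matching lower bound of Theorem \ref{thm:exact-lower}. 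The minor technical points you flag (enlarging the index set to $[n]^n$ and the offsets to $\mathbb Z$, and passing from the open to the closed tail in (\ref{eq:error})) are handled correctly.
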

Moreover, the next result shows that the condition in Theorem \ref{thm:exact} is necessary for exact recover.
\begin{thm}\label{thm:exact-lower}
Assume $\limsup_n\frac{n\beta^2}{4\sigma^2\log n}<1$. There exists a distribution of (\ref{eq:ARM}) that satisfies (\ref{eq:error}), (\ref{eq:signal}) and (\ref{eq:signal-entropy}), such that
$$\liminf_n\inf_{\hat{r}}\sup_{r\in\mathcal{R}}\mathbb{P}_r(\hat{r}\neq r)\geq \frac{1}{2}.$$
\end{thm}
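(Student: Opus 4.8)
The plan is to lower bound the minimax risk by the Bayes risk under a carefully chosen prior supported on $\mathcal{R}$, for an explicit instance of (\ref{eq:ARM}) that saturates the signal condition. Concretely, I would take Gaussian noise $Z_{ij}\sim N(0,\sigma^2)$ and the one-sided mean model $\mu_{ab}=\lambda a$ with $\lambda^2=\frac{2n\beta^2}{n-1}$. With this choice $\sum_{1\le i\neq j\le n}(\mu_{\tilde r(i)\tilde r(j)}-\mu_{r(i)r(j)})^2=\lambda^2(n-1)\|\tilde r-r\|^2=2n\beta^2\|\tilde r-r\|^2$ holds with equality for all $r,\tilde r\in\mathcal{R}$, so (\ref{eq:signal}) is saturated and (\ref{eq:signal-entropy}) holds with, say, $M=2$; the Gaussian tail gives (\ref{eq:error}). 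Saturation is essential, since any slack would make the hypotheses easier to separate and degrade the constant in the threshold. Because $\mu_{ab}$ depends only on the first index, $Y_i:=\frac{1}{n-1}\sum_{j\neq i}X_{ij}$ is sufficient for $r(i)$, the $\{Y_i\}_{i\in[n]}$ are independent (they are averages over disjoint noise blocks), and $Y_i\sim N(\lambda r(i),\tau^2)$ with $\tau^2=\sigma^2/(n-1)$; this reduces the problem to recovering $r$ from independent Gaussians.

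The crux is choosing a prior that respects the tight constraint $|\sum_i r(i)-\sum_i i|\le c_n$ (with $c_n$ possibly as small as $1$) while still placing $\Theta(n)$ confusable coordinates separated at the right exponent. A product prior with independent single-coordinate shifts would violate the sum constraint, and a prior built from adjacent swaps is sum-preserving but produces the wrong threshold $\tfrac12\log n$ (a swap moves two coordinates and has testing exponent $2\,\mathrm{SNR}$). I would instead use the two-group prior: fix $c=n/2$ and draw a uniformly random $S\subseteq[n]$ with $|S|=n/2$, setting $r(i)=c+1$ for $i\in S$ and $r(i)=c$ otherwise. Every such $r$ lies in $[n]^n$ and has $\sum_i r(i)=cn+n/2=\sum_i i$ exactly, so the prior is supported on $\mathcal{R}$ for every $c_n\ge 0$. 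Recovering $r$ is equivalent to recovering $S$, and since $\sum_i r(i)^2$ is constant across the support, the posterior mode (the Bayes rule for $0$-$1$ loss, whose risk lower bounds the minimax risk) assigns value $c+1$ to the $n/2$ indices with the largest $Y_i$. Exact recovery succeeds if and only if $A:=\min_{i\in S}Y_i>\max_{i\notin S}Y_i=:B$.

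It then remains to show $\mathbb{P}(A<B)\to 1$ whenever $\limsup_n \frac{n\beta^2}{4\sigma^2\log n}<1$. Here $A$ is the minimum of $n/2$ i.i.d.\ $N(\lambda(c+1),\tau^2)$ variables and $B$ the maximum of $n/2$ i.i.d.\ $N(\lambda c,\tau^2)$ variables, independent of $A$. By standard concentration of Gaussian extreme order statistics, for any $\delta>0$ one has $A\le \lambda(c+1)-\tau\sqrt{(2-\delta)\log(n/2)}$ and $B\ge \lambda c+\tau\sqrt{(2-\delta)\log(n/2)}$ with probability tending to one; hence $A<B$ with high probability as soon as $\lambda<2\tau\sqrt{(2-\delta)\log(n/2)}$, i.e.\ $\frac{\lambda^2}{8\tau^2}=\frac{n\beta^2}{4\sigma^2}<(1-\delta/2)\log(n/2)$. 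Under the hypothesis $\frac{n\beta^2}{4\sigma^2}\le(1-\epsilon)\log n$ this holds for $\delta=\epsilon$ and all large $n$, so the Bayes error tends to $1$, yielding $\liminf_n\inf_{\hat r}\sup_{r}\mathbb{P}_r(\hat r\neq r)\ge \tfrac12$. I expect the main obstacle to be exactly this constant: the threshold is $\log n$, not $\tfrac12\log n$, because there are $\Theta(n^2)$ confusable high/low pairs (each crossing with exponent $2\,\mathrm{SNR}$) rather than $\Theta(n)$ single moves, and capturing this requires the extreme-value analysis of both group extremes together with the saturation of (\ref{eq:signal}); the concentration of $A$ and $B$ and the bookkeeping of the $o(1)$ terms are the technical heart of the argument.
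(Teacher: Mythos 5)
Your proposal is correct, but it takes a genuinely different route from the paper's. The paper keeps the differential comparison construction $X_{ij}\sim N(\tilde{\beta}(r(i)-r(j)),\sigma^2)$ already used for Theorem \ref{thm:lower}, reduces the supremum to a uniform average over $\mathcal{R}$ so that the optimal rule is the MLE, and lower bounds each $\mathbb{P}_r(\hat{r}\neq r)$ by the event that one of $n$ single-coordinate perturbations $r_j$ with $\|r_j-r\|^2=1$ (available inside $\mathcal{R}$ since $c_n\geq 1$) beats $r$; after rearrangement this is the event that the maximum of $n$ equicorrelated standard Gaussians exceeds $\frac{\sqrt{n-1}\tilde{\beta}}{\sqrt{2}\sigma}$, and the constant $\frac{1}{2}$ is read off from Hartigan's bound on maxima of dependent Gaussians (Lemma \ref{lem:hartigan}). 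You instead build a one-sided instance $\mu_{ab}=\lambda a$ that saturates (\ref{eq:signal}) and decouples the coordinates through the independent sufficient statistics $Y_i$, place a sum-preserving two-group prior on $\mathcal{R}$, identify the MAP as a top-$n/2$ selection rule (using that $\sum_i r(i)^2$ is constant on the prior's support), and finish with elementary extreme-value concentration for independent Gaussians; the threshold computation $\frac{\lambda^2}{8\tau^2}=\frac{n\beta^2}{4\sigma^2}$ checks out and both arguments land on the same $\log n$ boundary. What your route buys: no appeal to Hartigan's lemma, and a strictly stronger conclusion (the Bayes and hence minimax error tends to $1$, not merely stays above $\frac{1}{2}$). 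What the paper's route buys: a single construction that simultaneously serves Theorem \ref{thm:lower}, Theorem \ref{thm:exact-lower} and the comparison model of Section \ref{sec:comp}, and a purely local perturbation argument with no global combinatorial design. One small remark: your stated reason for discarding single-coordinate perturbations (that a product prior over independent shifts would leave $\mathcal{R}$) does not rule out the paper's non-product version, which perturbs one coordinate at a time from a fixed $r$ and stays in $\mathcal{R}$ because $c_n\geq 1$; that alternative also yields the correct $\log n$ threshold, at the cost of having to control the dependence among the $n$ resulting test statistics.
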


The results in Theorem \ref{thm:upper2}, Theorem \ref{thm:lower}, Theorem \ref{thm:exact} and Theorem \ref{thm:exact-lower} together give us a very good picture of the optimal error behavior. Interesting phase transitions are revealed for the approximate ranking problem. Depending on the signal-to-noise ratio $\frac{n\beta^2}{4\sigma^2}$, the optimal error rates have different behaviors. A graphical illustration is given by Figure \ref{fig:laoluan}. We summarize the phase transitions in the following table.
\begin{center}
  \begin{tabular}{ l | c |c| c|c }
    \hline
    $\ell_q(\hat{r},r)$ & trivial & non-trivial & consistent & strongly consistent \\ \hline
    $q=0$ & $ \frac{n\beta^2}{4\sigma^2}= O(1)$ & $ \frac{n\beta^2}{4\sigma^2}\gg 1$ & $ \frac{n\beta^2}{4\sigma^2}\gg 1$ & $ \frac{n\beta^2}{4\sigma^2}> \log n$ \\ \hline
    $q\in(0,2]$ & $ \frac{n\beta^2}{4\sigma^2}= O(n^{-2})$ & $ \frac{n\beta^2}{4\sigma^2}\gg n^{-2}$ & $ \frac{n\beta^2}{4\sigma^2}\gg 1$ & $ \frac{n\beta^2}{4\sigma^2}> \log n$\\ \hline
  \end{tabular}
\end{center}
We call a rate trivial if it is at the same order of the maximal value of the loss. For example, the maximum of the three loss functions $\ell_0(\hat{r},r)$, $\ell_1(\hat{r},r)$ and $\ell_2(\hat{r},r)$ are of orders $1$, $n$ and $n^2$, respectively. A rate is consistent if the loss converges to zero. We refer to exact recovery as being strongly consistent. Then, for each loss function, there are four different regimes: the trivial regime, the non-trivial regime, the consistent regime and the strongly consistent regime. The only exception is that for the loss $\ell_0(\hat{r},r)$, its non-trivial regime is identical to the consistent regime. For all other loss functions with $q\in(0,2]$, the rates in the consistent regime are exponential and the rates in the non-trivial regime are polynomial.

\section{Adaptive Procedures}\label{sec:alg}

Our paper considers a very general framework of the approximate ranking model in the form of $\mu_{r(i)r(j)}$. Though we are able to characterize the exact minimax rates of the problem under various regimes of the signal-to-noise ratio, the least-squares procedure (\ref{eq:LSE}) that can achieve the statistical optimality is usually infeasible in practice. In fact, similar optimization problems as (\ref{eq:LSE}) have been considered in the literature of graph matching/isomorphism problem \citep{zaslavskiy2009path}, which is believed very unlikely to be solved by a polynomial-time algorithm \citep{kobler2012graph}.

In this section, we consider some special cases of the general model $\mu_{r(i)r(j)}$, and then discuss possible adaptive procedures that can take advantage of the additional model structures to achieve the optimal statistical rates. Inspired by the latent space model in network analysis \citep{hoff2002latent}, we consider examples in the form
\begin{equation}
\mu_{r(i)r(j)}=f\left(\theta_{r(i)},\theta_{r(j)}\right).\label{eq:latent-ability}
\end{equation}
Here, $\theta_{r(i)}$ stands for the latent ability parameter of the position $r(i)$. In particular, we will analyze a differential comparison model and an additive collaboration model, both of which are in the form of (\ref{eq:latent-ability}). Interestingly, we will show for these two models, the approximate ranking problem is reduced to a feature matching problem considered by \cite{collier2016minimax}, for which efficient algorithms are available. When the latent parameters $\{\theta_i\}$ are unknown but admits a linear relation with respect to the underlying ranks, we show that a profile least-squares procedure, which can be solved by iterative feature matching, can adaptively achieve the optimal statistical rates.

\subsection{Differential Comparison}\label{sec:comp}

We first consider a differential comparison model, which imposes the structure
\begin{equation}
\mu_{ij}=\theta_i-\theta_j.\label{eq:def-comp}
\end{equation}
Therefore, the mean of the observation $X_{ij}$ is given by $\theta_{r(i)}-\theta_{r(j)}$, the difference between the abilities of $i$ and $j$. We propose the following signal conditions. For any $r,\tilde{r}\in\mathcal{R}$, we assume
\begin{equation}
\label{eq:signal-comp}\sum_{i=1}^n(\theta_{r(i)}-\theta_{\tilde{r}(i)})^2 \geq \beta^2\|\tilde{r}-r\|^2,
\end{equation}
\begin{equation}
\label{eq:stable-comp}\max_{r\in\mathcal{R}}\left|\sum_{i=1}^n\theta_{r(i)}-\sum_{i=1}^n\theta_i\right| = o(\sqrt{n}\beta).
\end{equation}
It is easy to check that the general condition (\ref{eq:signal}) is implied by (\ref{eq:signal-comp}) and (\ref{eq:stable-comp}). We remark that the condition (\ref{eq:stable-comp}) is coherent with the definition of the space $\mathcal{R}$.

In the current setting, the ability parameters $\{\theta_i\}$ are given but the correspondence between $\{i\}$ and $\{\theta_{r(i)}\}$ is linked by unknown ranks $\{r(i)\}$. Our general strategy to find ranks is based on the idea of feature matching \citep{collier2016minimax}. We first define the score of $i$ by
\begin{equation}
S_i=\frac{1}{2n}\sum_{j\in[n]\backslash\{i\}}(X_{ij}-X_{ji})+\frac{1}{n}\sum_{j=1}^n\theta_j.\label{eq:score-comp}
\end{equation}
Then, the estimator of ranks is defined by
\begin{equation}
\hat{r}=\argmin_{r\in\mathcal{R}}\sum_{i=1}^n(S_i-\theta_{r(i)})^2.\label{eq:feature-matching}
\end{equation}
This optimization can be efficiently solved by feature matching algorithms discussed in \cite{collier2016minimax}. Its statistical performance is given by the following theorem.
\begin{thm}\label{thm:upper-comp}
Consider any loss $\ell(\hat{r},r)$ with $q\in[0,2]$.
Under the conditions (\ref{eq:error}), (\ref{eq:signal-comp}) and (\ref{eq:stable-comp}), we have
$$\sup_{r\in\mathcal{R}}\mathbb{E}_r\ell_q(\hat{r},r)\lesssim \begin{cases}
\exp\left(-(1+o(1))\frac{n\beta^2}{4\sigma^2}\right), & \frac{n\beta^2}{4\sigma^2}>1, \\
\left(\frac{n\beta^2}{4\sigma^2}\right)^{-q/2}\wedge n^q, & \frac{n\beta^2}{4\sigma^2}\leq 1,
\end{cases}$$
where $o(1)$ denotes a vanishing sequence as $n\rightarrow\infty$. Moreover, when $\liminf_n\frac{n\beta^2}{4\sigma^2\log n}>1$, we have $\hat{r}=r$ with probability $1-o(1)$.
\end{thm}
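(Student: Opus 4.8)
The plan is to reduce the feature-matching estimator (\ref{eq:feature-matching}) to a one-dimensional signal-plus-noise problem and then treat the two signal-to-noise regimes by genuinely different arguments. First I would compute the mean of the score (\ref{eq:score-comp}). Since $\mathbb{E}(X_{ij}-X_{ji})=2(\theta_{r(i)}-\theta_{r(j)})$, a direct calculation gives $\mathbb{E}S_i=\theta_{r(i)}+b$ with the \emph{common} bias $b=\frac1n\big(\sum_j\theta_j-\sum_j\theta_{r(j)}\big)$, which by (\ref{eq:stable-comp}) satisfies $|b|=o(\beta/\sqrt n)$. Writing $\xi_i=S_i-\mathbb{E}S_i=\frac1{2n}\sum_{j\neq i}(Z_{ij}-Z_{ji})$ and $\theta_r=(\theta_{r(1)},\dots,\theta_{r(n)})$, one checks the identity $\sum_i\Delta_i\xi_i=\frac1{2n}\sum_{i\neq j}(\Delta_i-\Delta_j)Z_{ij}$ for every $\Delta\in\mathbb{R}^n$, so by (\ref{eq:error}) the linear form $\sum_i\Delta_i\xi_i$ is sub-Gaussian with proxy at most $\frac{\sigma^2}{2n}\|\Delta\|^2$; in particular each $\xi_i$ is sub-Gaussian with proxy $\lesssim\sigma^2/n$. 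Optimality of $\hat r$ over $r\in\mathcal{R}$ then gives the basic inequality $\|\theta_{\hat r}-\theta_r\|^2\le 2\sum_i\big(\theta_{\hat r(i)}-\theta_{r(i)}\big)(\xi_i+b)$, which is the workhorse for both regimes.

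In the polynomial regime $\frac{n\beta^2}{4\sigma^2}\le 1$ I would argue globally and crudely. Setting $\Delta=\theta_{\hat r}-\theta_r$, the bias contributes $2b\sum_i\Delta_i$; by (\ref{eq:stable-comp}) we have $|\sum_i\Delta_i|=o(\sqrt n\beta)$, so this term is $o(\beta^2)$ and is absorbed against $\|\Delta\|^2\ge\beta^2\|\hat r-r\|^2\ge\beta^2$ on the event $\hat r\neq r$, using (\ref{eq:signal-comp}). Cauchy--Schwarz applied to the remaining term gives $\|\Delta\|\le(2+o(1))\|\xi\|$, and since $\mathbb{E}\|\xi\|^2=\sum_i\mathbb{E}\xi_i^2\lesssim\sigma^2$, the lower signal bound (\ref{eq:signal-comp}) yields $\mathbb{E}\ell_2(\hat r,r)=\tfrac1n\mathbb{E}\|\hat r-r\|^2\le\tfrac1{n\beta^2}\mathbb{E}\|\Delta\|^2\lesssim\frac{\sigma^2}{n\beta^2}\asymp\big(\frac{n\beta^2}{4\sigma^2}\big)^{-1}$, capped at $n^2$ by $\ell_2\le n^2$. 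Crucially this uses no covering or peeling over $\mathcal{R}$, which is precisely why no logarithmic factor appears and why (\ref{eq:signal-entropy}) is not needed here (contrast Theorem~\ref{thm:upper2}). For general $q\in[0,2]$, Jensen's inequality gives $\ell_q\le\ell_2^{q/2}$, hence $\mathbb{E}\ell_q\le(\mathbb{E}\ell_2)^{q/2}\lesssim\big(\frac{n\beta^2}{4\sigma^2}\big)^{-q/2}\wedge n^q$, matching the stated rate.

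The exponential regime $\frac{n\beta^2}{4\sigma^2}>1$ requires the discreteness of $\mathcal{R}$, since the Cauchy--Schwarz bound above never decays faster than $\sigma^2/(n\beta^2)$. Here I would analyze the matching coordinate by coordinate. For the unconstrained nearest-neighbour rule $\hat r^{\mathrm{nc}}(i)=\argmin_a(S_i-\theta_a)^2$, the assignment $\hat r^{\mathrm{nc}}(i)=r(i)+s$ forces $\theta_{r(i)+s}$ to be at least as close to $S_i$ as $\theta_{r(i)}$, i.e.\ $|\xi_i+b|\ge\delta_s/2$ with $\delta_s=|\theta_{r(i)+s}-\theta_{r(i)}|$. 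Choosing $\tilde r$ equal to $r$ except $\tilde r(i)=r(i)+s$ in (\ref{eq:signal-comp}) gives the gap bound $\delta_s\ge|s|\beta$, so the sub-Gaussian tail of $\xi_i$ (proxy $\le\sigma^2/(2n)$) yields $\mathbb{P}(|\hat r^{\mathrm{nc}}(i)-r(i)|\ge s)\le\exp\!\big(-(1-o(1))\tfrac{n\delta_s^2}{4\sigma^2}\big)\le\exp\!\big(-(1-o(1))s^2\tfrac{n\beta^2}{4\sigma^2}\big)$; the worst-case gap $\delta_1=\beta$ and the sharp constant $\tfrac14$ reproduce exactly the exponent $\tfrac{n\beta^2}{4\sigma^2}$. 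Summation by parts then gives $\mathbb{E}|\hat r^{\mathrm{nc}}(i)-r(i)|^q=\sum_{s\ge1}\big(s^q-(s-1)^q\big)\mathbb{P}(\cdot\ge s)\lesssim\exp\!\big(-(1-o(1))\tfrac{n\beta^2}{4\sigma^2}\big)$, the $s=1$ term dominating once the ratio exceeds $1$; averaging over $i$ gives the exponential rate for every $q$, and the union bound $\mathbb{P}(\hat r^{\mathrm{nc}}\neq r)\le\sum_i\mathbb{P}(\hat r^{\mathrm{nc}}(i)\neq r(i))\lesssim n\exp(-\tfrac{n\beta^2}{4\sigma^2})\to0$ under $\liminf_n\frac{n\beta^2}{4\sigma^2\log n}>1$ delivers exact recovery.

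The hard part is that $\hat r$ is the minimizer over the \emph{constrained} set $\mathcal{R}$, whereas the per-coordinate argument above governs the unconstrained $\hat r^{\mathrm{nc}}$: resetting one coordinate of $\hat r$ to its true value moves $\sum_i\hat r(i)$ and typically leaves $\mathcal{R}$, so a single-coordinate comparison is illegal, while a two-coordinate swap would inject the noise of a second score and spoil the sharp constant (producing $\tfrac{n\beta^2}{2\sigma^2}$ instead of $\tfrac{n\beta^2}{4\sigma^2}$). I would resolve this through the Lagrangian structure of the separable objective (\ref{eq:feature-matching}) under the single near-linear constraint $|\sum_i r(i)-\sum_i i|\le c_n$: the constrained optimum equals a nearest-neighbour matching of the scores after a common additive shift fixed by one multiplier. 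The first (unconditional) per-coordinate bound shows $\sum_i\hat r^{\mathrm{nc}}(i)$ departs from its target by only $O\big(\sqrt{n}\,\exp(-\tfrac{n\beta^2}{8\sigma^2})\big)$ in probability, which can be driven inside the band of width $c_n$ by a shift of size $o(\beta)$; since such a shift perturbs each threshold $\delta_s/2$ by $o(\beta)$, rerunning the coordinatewise tail bound with shifted scores preserves every exponent up to the $(1+o(1))$ factor. Establishing that the optimal multiplier is this small---equivalently, that the constraint is nearly inactive---is the principal technical obstacle; once it is in hand, the constrained estimator inherits all the bounds proved for $\hat r^{\mathrm{nc}}$, completing the proof.
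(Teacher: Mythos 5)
Your polynomial-regime argument and your treatment of the score's bias are sound and essentially coincide with the paper's: the optimality inequality plus Cauchy--Schwarz gives $\|\theta_{\hat r}-\theta_r\|\le 2\|S-\theta_r\|$, $\mathbb{E}\|S-\theta_r\|^2\lesssim\sigma^2$, and (\ref{eq:signal-comp}) converts this into $\mathbb{E}\ell_2\lesssim\sigma^2/(n\beta^2)$ with Jensen handling general $q$. The genuine gap is in the exponential regime (and hence in the exact-recovery claim, which you derive from it): everything there is proved for the \emph{unconstrained} coordinatewise rule $\hat r^{\mathrm{nc}}$, and the transfer to the actual estimator, the minimizer over $\mathcal{R}$, is left as an acknowledged "principal technical obstacle." The proposed bridge does not hold as stated: the Lagrangian relaxation of the separable objective under the constraint on $\sum_i r(i)$ gives per-coordinate problems of the form $\argmin_a\{(S_i-\theta_a)^2+\lambda a\}$, which is a nearest-neighbour rule with a common additive shift of the scores only when $\theta_a$ is affine in $a$ (the parametric case of Section \ref{sec:para}, not the setting of Theorem \ref{thm:upper-comp}), and even then an integer program need not attain its Lagrangian dual, so the constrained optimum need not be of this form at all. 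A secondary flaw: you derive the gap bound $|\theta_{r(i)+s}-\theta_{r(i)}|\ge|s|\beta$ by perturbing a single coordinate in (\ref{eq:signal-comp}), but that perturbed $\tilde r$ leaves $\mathcal{R}$ once $|s|>2c_n$; the bound can be rescued via a two-coordinate swap $a\leftrightarrow a+s$ (which stays in $\mathcal{R}$ and yields $2(\theta_a-\theta_{a+s})^2\ge 2s^2\beta^2$), but as written the step is not justified.

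The paper avoids the whole difficulty by never decoupling coordinates: it bounds, for each $\tilde r\in\mathcal{R}$ with $\|\tilde r-r\|^2=m$, the pairwise comparison probability $\mathbb{P}_r(\sum_i(S_i-\theta_{\tilde r(i)})^2\le\sum_i(S_i-\theta_{r(i)})^2)\le\exp(-(1+o(1))\frac{n\beta^2m}{4\sigma^2})$ (Lemma \ref{lem:dev-comp}, using the sub-Gaussianity of $\sum_i(\theta_{r(i)}-\theta_{\tilde r(i)})\xi_i$ with variance proxy $\frac{\sigma^2}{2n}\|\theta_r-\theta_{\tilde r}\|^2$ and absorbing the bias via (\ref{eq:stable-comp})), and then runs the same union bound over $\mathcal{R}_m$ with the cardinality estimate of Lemma \ref{lem:card} as in the proof of Theorem \ref{thm:upper1}. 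This works directly on the constrained estimator, and your premise that single-coordinate comparisons are "illegal" inside $\mathcal{R}$ is beside the point: the union-bound argument never resets a coordinate of $\hat r$, it only compares the objective at $r$ against every other element of $\mathcal{R}$, and the band $|\sum_i r(i)-\sum_i i|\le c_n$ with $c_n\ge1$ in any case contains single-coordinate perturbations in at least one direction, which is why the sharp exponent $\frac{n\beta^2}{4\sigma^2}$ (rather than the doubled one you fear) is both achievable and matched by the lower bound. To repair your proof you would either have to complete the near-inactivity argument for the constraint (nontrivial and unnecessary) or switch to the paper's direct union bound.
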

It is easy to check that the distribution constructed to prove the lower bound results in Theorem \ref{thm:lower} satisfies the conditions (\ref{eq:signal-comp}) and (\ref{eq:stable-comp}). This implies that the rates achieved by the computationally efficient estimator $\hat{r}$ in Theorem \ref{thm:upper-comp} is optimal under the differential comparison model. It is interesting that we do not have any logarithmic factor in the convergence rates when $\frac{n\beta^2}{4\sigma^2}\leq 1$ even without any condition similar to (\ref{eq:signal-entropy}). This is a consequence by the special structure in the differential comparison model (\ref{eq:def-comp}).

\subsection{Additive Collaboration}\label{sec:coll}

Since our framework is quite general, we can also consider an additive collaboration model. It imposes the structure
\begin{equation}
\mu_{ij}=\theta_i+\theta_j.\label{eq:def-add}
\end{equation}
Compared with (\ref{eq:def-comp}), the collaboration model assumes that the mean $\mu_{ij}$ is increasing with respect to both the abilities of $i$ and $j$. Despite the difference in interpretation, the two models have very similar mathematical structures. We will adopt the signal condition (\ref{eq:signal-comp}) here, but we do not need to assume the second condition (\ref{eq:stable-comp}). With the additive structure, the condition (\ref{eq:signal-comp}) alone implies (\ref{eq:signal}).  We also use the feature-matching estimator (\ref{eq:feature-matching}), with the score of each $i$ defined as
\begin{equation}
S_i=\frac{1}{2(n-2)}\left(\sum_{j\in[n]\backslash\{i\}}(X_{ij}+X_{ji})-\frac{1}{n-1}\sum_{1\leq i\neq j\leq n}X_{ij}\right).\label{eq:score-coll}
\end{equation}
The performance of the estimator $\hat{r}$ is given by the following theorem.
\begin{thm}\label{thm:upper-add}
Consider any loss $\ell(\hat{r},r)$ with $q\in[0,2]$.
Under the conditions (\ref{eq:error}) and (\ref{eq:signal-comp}), we have
$$\sup_{r\in\mathcal{R}}\mathbb{E}_r\ell_q(\hat{r},r)\lesssim \begin{cases}
\exp\left(-(1+o(1))\frac{n\beta^2}{4\sigma^2}\right), & \frac{n\beta^2}{4\sigma^2}>1, \\
\left(\frac{n\beta^2}{4\sigma^2}\right)^{-q/2}\wedge n^q, & \frac{n\beta^2}{4\sigma^2}\leq 1,
\end{cases}$$
where $o(1)$ denotes a vanishing sequence as $n\rightarrow\infty$. Moreover, when $\liminf_n\frac{n\beta^2}{4\sigma^2\log n}>1$, we have $\hat{r}=r$ with probability $1-o(1)$.
\end{thm}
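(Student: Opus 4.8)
The plan is to reduce Theorem \ref{thm:upper-add} to the feature-matching analysis already carried out for Theorem \ref{thm:upper-comp}: once the collaboration score (\ref{eq:score-coll}) is shown to enjoy the same decomposition and tail behaviour as the comparison score, the two arguments coincide, so the new content is entirely in analysing (\ref{eq:score-coll}). First I would establish the decomposition $S_i = \theta_{r(i)} + w_i$. Using $\mathbb{E} X_{ij} = \theta_{r(i)} + \theta_{r(j)}$ and writing $T = \sum_k \theta_{r(k)}$, a short computation gives $\sum_{j\neq i}\mathbb{E}(X_{ij}+X_{ji}) = 2(n-2)\theta_{r(i)} + 2T$ while $\frac{1}{n-1}\sum_{k\neq l}\mathbb{E} X_{kl} = 2T$, so the global correction term in (\ref{eq:score-coll}) cancels the nuisance $T$ exactly and $\mathbb{E} S_i = \theta_{r(i)}$ with no bias. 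This exact unbiasedness is precisely why, unlike the comparison model, no analogue of (\ref{eq:stable-comp}) is required. Collecting coefficients, $w_i = \sum_{a\neq b} v^{(i)}_{ab} Z_{ab}$ with $v^{(i)}_{ab} = \frac{1}{2(n-2)}\mathbb{I}\{i\in\{a,b\}\} - \frac{1}{2(n-2)(n-1)}$, and a direct count yields $\sum_{a\neq b}(v^{(i)}_{ab})^2 = \frac{2n-3}{4(n-1)(n-2)} = \frac{1}{2n}(1+o(1))$; by (\ref{eq:error}) this makes $w_i$ mean-zero and sub-Gaussian with variance proxy $\frac{\sigma^2}{2n}(1+o(1))$. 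More generally, for a fixed vector $d$ one finds $\sum_{a\neq b}(\sum_i d_i v^{(i)}_{ab})^2 = \frac{\|d\|^2}{2(n-2)} - \frac{(\sum_i d_i)^2}{4(n-1)(n-2)} \leq \frac{\|d\|^2}{2(n-2)}$, so $\langle w, d\rangle$ is sub-Gaussian with proxy at most $\frac{\sigma^2\|d\|^2}{2(n-2)}$. I would also record that (\ref{eq:signal-comp}) implies (\ref{eq:signal}): with $e_i = \theta_{\tilde r(i)} - \theta_{r(i)}$ one has $\sum_{i\neq j}(e_i+e_j)^2 = 2(n-2)\|e\|^2 + 2(\sum_i e_i)^2 \geq 2(n-2)\beta^2\|\tilde r - r\|^2$, i.e. (\ref{eq:signal}) with $\beta$ replaced by $(1-o(1))\beta$, which does not affect any stated rate.

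With these facts, $\hat r$ minimises $\sum_i (w_i - (\theta_{r'(i)}-\theta_{r(i)}))^2$ up to a constant, which is exactly the feature-matching problem. The polynomial regime is then the easy half: the basic inequality $\|\hat d\|^2 \leq 2\langle w,\hat d\rangle$ with $\hat d_i = \theta_{\hat r(i)}-\theta_{r(i)}$ gives $\|\hat d\|\leq 2\|w\|$, hence $\beta^2\|\hat r-r\|^2 \leq \|\hat d\|^2 \leq 4\sum_i w_i^2$; since $\mathbb{E}\sum_i w_i^2 = \sum_i \mathbb{E} w_i^2 \lesssim \sigma^2$ (no independence needed), we obtain $\mathbb{E}\ell_2 \lesssim \left(\frac{n\beta^2}{4\sigma^2}\right)^{-1}$, and the concavity bound $\ell_q \leq \ell_2^{q/2}$ together with Jensen yields $\mathbb{E}\ell_q \lesssim \left(\frac{n\beta^2}{4\sigma^2}\right)^{-q/2}$ for all $q\in(0,2]$; the $\wedge\, n^q$ cap is the deterministic maximum of the loss.

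The exponential regime $\frac{n\beta^2}{4\sigma^2}>1$ is where I expect the real difficulty. The heuristic is that, because the objective decouples across coordinates, $\hat r$ behaves like the coordinatewise nearest-feature rule, for which $\{\hat r(i)=s\}$ forces $|w_i|\geq \frac12|\theta_{r(i)}-\theta_s|$; the sub-Gaussian tail and $(\theta_{r(i)}-\theta_s)^2\geq \beta^2(s-r(i))^2$ then give $\mathbb{P}(\hat r(i)=s)\leq \exp\left(-(1+o(1))\frac{n\beta^2}{4\sigma^2}(s-r(i))^2\right)$. Summing $\sum_{s\neq r(i)}|s-r(i)|^q$ against this Gaussian-type tail is dominated by $|s-r(i)|=1$, giving $\mathbb{E}|\hat r(i)-r(i)|^q \lesssim \exp\left(-(1+o(1))\frac{n\beta^2}{4\sigma^2}\right)$, and averaging over $i$ gives the claimed rate; exact recovery then follows from the union bound $\mathbb{P}(\hat r\neq r)\leq \sum_i\sum_{s\neq r(i)}\mathbb{P}(\hat r(i)=s) \lesssim n\exp\left(-(1+o(1))\frac{n\beta^2}{4\sigma^2}\right)\to 0$ as soon as $\liminf_n \frac{n\beta^2}{4\sigma^2\log n}>1$.

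The obstacle is that $\hat r$ is confined to $\mathcal{R}$, so the coordinatewise comparison is not exact: the single linear budget $|\sum_i r'(i)-\sum_i i|\leq c_n$ couples the coordinates, and a naive union bound over all competitors $r'\in\mathcal{R}$ fails because the entropy of the alternative set overwhelms $\exp\left(-\frac{n\beta^2}{4\sigma^2}\right)$ unless $\frac{n\beta^2}{4\sigma^2}\gtrsim\log n$. I would resolve this through the Lagrangian description $\hat r(i)=\argmin_s[(S_i-\theta_s)^2+\lambda s]$, where the multiplier $\lambda$ is nonzero only when the unconstrained solution violates the budget, and show that the induced shift is negligible — either by proving that $\lambda$ is small with high probability (exploiting $c_n=o(\sqrt n)$) or, equivalently, by a local swap argument that pairs an over-ranked with an under-ranked coordinate so that the budget-preserving exchange must decrease the objective unless two independent noise terms are simultaneously large. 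Making this perturbation quantitative while keeping the sharp constant in the exponent is the crux; the reduction and the polynomial regime are comparatively routine.
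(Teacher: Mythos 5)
Your reduction and your analysis of the score (\ref{eq:score-coll}) are correct and essentially identical to the paper's: the exact identity $\mathbb{E}_rS_i=\theta_{r(i)}$ (which is indeed why no analogue of (\ref{eq:stable-comp}) is needed), the sub-Gaussian proxy $(1+o(1))\frac{\sigma^2\|d\|^2}{2n}$ for $\iprod{w}{d}$, the implication from (\ref{eq:signal-comp}) to (\ref{eq:signal}), the basic-inequality bound $\beta^2\|\hat r-r\|^2\le 4\|w\|^2$ for the polynomial regime, and the $\ell_q\le\ell_2^{q/2}$ step all appear in the proofs of Theorem \ref{thm:upper-add} and Lemma \ref{lem:dev-add} in the form you give.

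The gap is the exponential regime, which you correctly identify as the crux but leave open; moreover the route you propose (a Lagrange multiplier for the budget constraint, or a budget-preserving swap pairing an over-ranked with an under-ranked coordinate) is not the resolution and is not needed. The paper closes this regime with precisely the global union bound you dismiss as failing, organized so that it works. Lemma \ref{lem:dev-add} gives, for every competitor $\tilde r$ with $\|\tilde r-r\|^2=m$, the pairwise bound $\mathbb{P}_r\left(\sum_i(S_i-\theta_{\tilde r(i)})^2\le\sum_i(S_i-\theta_{r(i)})^2\right)\le\exp\left(-(1+o(1))\frac{n\beta^2m}{4\sigma^2}\right)$ --- exactly your sub-Gaussian computation applied to $d_i=\theta_{r(i)}-\theta_{\tilde r(i)}$ --- while Lemma \ref{lem:card} bounds the shell cardinality by $|\mathcal{R}_m|\le(2e^2n/m)^m$ for $m\le n$. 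The point you miss is that the quantity to control is not $\mathbb{P}(\hat r\ne r)$ but $\mathbb{E}\|\hat r-r\|^2\le m_0+\sum_{m>m_0}m|\mathcal{R}_m|\exp\left(-\frac{n\beta^2m}{4\sigma^2}\right)$, normalized by $n$ since $\ell_2(\hat r,r)=\|\hat r-r\|^2/n$. The dominant contribution comes from the $O(n)$ competitors at distance $m=1$, each costing $\exp\left(-(1+o(1))\frac{n\beta^2}{4\sigma^2}\right)$, and the $1/n$ normalization absorbs their number; choosing the truncation level $m_0\asymp n\exp\left(-\frac{n\beta^2}{4\sigma^2}\right)$ makes every shell $m>m_0$ satisfy $(4e^2n/m)^m\exp\left(-\frac{n\beta^2m}{4\sigma^2}\right)\le 2^{-m}$, so the per-shell entropy never has to be beaten by a single factor $\exp\left(-\frac{n\beta^2}{4\sigma^2}\right)$ and no condition $\frac{n\beta^2}{4\sigma^2}\gtrsim\log n$ is required. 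This is the Case 1/Case 2 computation in the proof of Theorem \ref{thm:upper1}, which the paper reuses verbatim after substituting Lemma \ref{lem:dev-add} for Lemma \ref{lem:dev}; the question of whether $\hat r$ decouples across coordinates never arises, and exact recovery follows by Markov's inequality from $\mathbb{E}\ell_0(\hat r,r)\le\exp\left(-(1+o(1))\frac{n\beta^2}{4\sigma^2}\right)$ rather than from a coordinatewise union bound. To complete your argument, replace the Lagrangian/swap program by this shell-and-truncation first-moment computation.
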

We obtain the same rates as in Theorem \ref{thm:upper-comp} for the differential comparison model. Note that the same argument in the lower bound proof of Theorem \ref{thm:lower} can easily be adapted for the additive collaboration model in this section. This implies the rates in Theorem \ref{thm:upper-add} are all optimal.

\subsection{Applications in a Parametric Model}\label{sec:para}

In this section, we consider both the comparison and the collaboration models with $\theta_i=\alpha+\tilde{\beta}i$ for some $\alpha\in\mathbb{R}$ and $\tilde{\beta}>0$. That is, the ability parameter $\theta_i$ is a linear function of its latent position. It is easy to check that both conditions (\ref{eq:signal-comp}) and (\ref{eq:stable-comp}) are satisfied with some $\beta=(1+o(1))\tilde{\beta}$.

Since $\alpha$ and $\tilde{\beta}$ are unknown, we cannot directly use the feature matching estimator (\ref{eq:feature-matching}). Instead, we propose the following profile least-squares objective,
\begin{equation}
\text{PL}(r)=\min_{a,b}\sum_{i=1}^n\left(\hat{S}_i-a-br(i)\right)^2.\label{eq:PL}
\end{equation}
Then, the estimator is found through minimizing $\text{PL}(r)$. Note that we use a linear model $a+br(i)$ as a surrogate for $\theta_{r(i)}$ in the definition of (\ref{eq:PL}). The feature matching procedure and the linear model fit of $\theta_{r(i)}$ are done simultaneously. An equivalent way of writing $\text{PL}(r)$ is by
$$\text{PL}(r)=\sum_{i=1}^n\left(\hat{S}_i-\hat{a}_r-\hat{b}_rr(i)\right)^2,$$
where
\begin{equation}
\hat{b}_r=\frac{\frac{1}{n}\sum_{i=1}^n\hat{S}_ir(i)-\left(\frac{1}{n}\sum_{i=1}^nr(i)\right)\left(\frac{1}{n}\sum_{i=1}^nS_i\right)}{\frac{1}{n}\sum_{i=1}^nr(i)^2-\left(\frac{1}{n}\sum_{i=1}^nr(i)\right)^2}\quad\text{and}\quad \hat{a}_r=\frac{1}{n}\sum_{i=1}^nS_i-\hat{b}_r\frac{1}{n}\sum_{i=1}^nr(i).\label{eq:OLS}
\end{equation}
For the differential comparison model, we use the score
$$\hat{S}_i=\frac{1}{2n}\sum_{j\in[n]\backslash\{i\}}(X_{ij}-X_{ji}).$$
For the additive collaboration model, we use
$$\hat{S}_i=\frac{1}{2(n-2)}\sum_{j\in[n]\backslash\{i\}}(X_{ij}+X_{ji}).$$
Therefore, the estimator $\hat{r}$ is fully data-driven.

Optimizing over $\text{PL}(r)$ can be done in a iterative fashion. At each iteration, one first optimize $\sum_{i=1}^n\left(\hat{S}_i-\hat{a}_{r_{t-1}}-\hat{b}_{r_{t-1}}r(i)\right)^2$ over $r$. Then, one can update the parameters $\hat{a}_{r_{t-1}}$ and $\hat{b}_{r_{t-1}}$ using the least-squares formula (\ref{eq:OLS}). In other words, feature matching and linear regression are run in turn iteratively.
In this section, our focus is on the study of the statistical property of the global optimizer of $\text{PL}(r)$. The convergence analysis of the iterative algorithm will be studied in a much more general framework of profile least-squares optimization in the future.

To study the statistical error of the profile least-squares estimator, we consider the following space of approximate ranks. Define
$$\mathcal{R}'=\left\{r\in\mathcal{R}: \left|\sum_{i=1}^nr(i)^2-\sum_{i=1}^n i^2\right|\leq c_n'\right\},$$
where $c_n'=o(n^3)$. The set $\mathcal{R}'$ is a subset of $\mathcal{R}$ with an additional constraint on $\sum_{i=1}^nr(i)^2$. This extra constraint does not make the problem easier, and the same minimax lower bound results also hold for the set $\mathcal{R}'$ with a simple modification of the proof of Theorem \ref{thm:lower}.

\begin{thm}\label{thm:PL}
Consider the estimator $\hat{r}=\argmin_{r\in\mathcal{R}'}\text{PL}(r)$. Under the conditions (\ref{eq:error}) for the differential comparison model or the additive collaboration model with $\theta_i=\alpha+\tilde{\beta}i$ for some $\tilde{\beta}=(1+o(1))\beta$, we have
$$\sup_{r\in\mathcal{R}'}\mathbb{E}_r\ell_q(\hat{r},r)\lesssim \begin{cases}
\exp\left(-(1+o(1))\frac{n\beta^2}{4\sigma^2}\right), & \frac{n\beta^2}{4\sigma^2}>1, \\
\left(\frac{n\beta^2}{4\sigma^2}\right)^{-q/2}\wedge n^q, & \frac{n\beta^2}{4\sigma^2}\leq 1,
\end{cases}$$
where $o(1)$ denotes a vanishing sequence as $n\rightarrow\infty$. Moreover, when $\liminf_n\frac{n\beta^2}{4\sigma^2\log n}>1$, we have $\hat{r}=r$ with probability $1-o(1)$.
\end{thm}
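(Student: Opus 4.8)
The plan is to reduce the profile least-squares problem to the feature-matching problems already analyzed in Theorem~\ref{thm:upper-comp} and Theorem~\ref{thm:upper-add}, exploiting the extra moment constraint that defines $\mathcal{R}'$. First I would record the mean structure of the scores. Writing $\theta_{r(i)}=\alpha+\tilde\beta r(i)$ and using (\ref{eq:def-comp}) or (\ref{eq:def-add}), a direct computation shows that in both models $\mathbb{E}\hat S_i=\tilde\beta\, r(i)+a^\ast$ for an $i$-independent constant $a^\ast$, so that $\hat S_i=a^\ast+\tilde\beta r(i)+W_i$ with $W_i=\frac{1}{2n}\sum_{j\neq i}(Z_{ij}-Z_{ji})$ in the comparison case and $W_i=\frac{1}{2(n-2)}\sum_{j\neq i}(Z_{ij}+Z_{ji})$ in the collaboration case. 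Each linear form $\sum_i W_i v_i$ reduces to a form $\sum_{i\neq j}\tilde v_{ij}Z_{ij}$, so (\ref{eq:error}) controls its tail; in particular the $W_i$ have noise level of order $\sigma/\sqrt n$. I would also note that these $\hat S_i$ differ from the scores in (\ref{eq:score-comp}) and (\ref{eq:score-coll}) only by an $i$-independent constant, which is absorbed by the free intercept $a$ in (\ref{eq:PL}).

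Next comes the key reduction. Minimizing out $(a,b)$ via (\ref{eq:OLS}) gives the regression decomposition
\[
\text{PL}(r)=T-\frac{C(r)^2}{D(r)},\qquad T=\sum_{i=1}^n\big(\hat S_i-\bar{\hat S}\big)^2,
\]
where $C(r)=\sum_i(\hat S_i-\bar{\hat S})(r(i)-\bar r)$, $D(r)=\sum_i(r(i)-\bar r)^2$, and $\bar{\hat S},\bar r$ are the sample means. Since $T$ is independent of $r$, $\hat r$ maximizes $C(r)^2/D(r)$ over $\mathcal{R}'$. The whole point of the constraint $|\sum_i r(i)^2-\sum_i i^2|\le c_n'=o(n^3)$, together with $|\sum_i r(i)-\sum_i i|\le c_n=o(\sqrt n)$, is that it forces $D(r)=\frac{n^3}{12}(1+o(1))$ uniformly over $r\in\mathcal{R}'$. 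Consequently maximizing $C(r)^2/D(r)$ is, up to a uniform $(1+o(1))$ factor, the same as maximizing $C(r)$ (the optimum lies on the positive branch, since $C$ evaluated at the truth is of order $\tilde\beta D(r)$, far exceeding the noise). I would then compare with the idealized feature-matching objective $\text{FM}(r)=\min_a\sum_i(\hat S_i-a-\tilde\beta r(i))^2=T-2\tilde\beta C(r)+\tilde\beta^2D(r)$, which uses the true slope; because $D(r)$ is essentially constant, minimizing $\text{FM}$ over $\mathcal{R}'$ also amounts to maximizing $C(r)$. Thus the profile least-squares minimizer and the fixed-slope feature-matching minimizer solve the same optimization up to uniformly negligible perturbations of the objective.

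With the reduction in hand, I would invoke the analysis behind Theorem~\ref{thm:upper-comp} and Theorem~\ref{thm:upper-add}: the fixed-slope feature matching of $\hat S_i$ against $\theta_{r(i)}=\alpha+\tilde\beta r(i)$ is exactly the estimator those theorems handle (the free intercept absorbing the constant shift), and with $\tilde\beta=(1+o(1))\beta$ it yields the stated exponential and polynomial rates as well as exact recovery in the high-SNR regime. The matching lower bound is inherited because, as already noted in the text, the construction proving Theorem~\ref{thm:lower} can be placed inside $\mathcal{R}'$.

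The hard part will be making the reduction quantitatively uniform, tightly enough to preserve the leading constant in the exponent. Because the rate in the high-SNR regime is $\exp(-(1+o(1))\frac{n\beta^2}{4\sigma^2})$, the $(1+o(1))$ discrepancies coming from (i) the fluctuation of $D(r)$ across $\mathcal{R}'$, governed by $c_n$ and $c_n'$, and (ii) the gap $D(r)(\hat b_r-\tilde\beta)^2=\text{FM}(r)-\text{PL}(r)$ between the two objectives, must be shown to be of smaller order than the per-coordinate signal gap $\tilde\beta|\tilde r(i)-r(i)|$ relative to the noise level $\sigma/\sqrt n$, uniformly over all $\tilde r\in\mathcal{R}'$ simultaneously. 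I expect to handle this exactly as in the feature-matching proofs: a peeling/union-bound argument over the dyadic scales of $\|\tilde r-r\|$, using (\ref{eq:error}) to bound $\sum_i W_i(\tilde r(i)-r(i))$ and checking that the perturbation terms are dominated by the signal on every scale. Verifying that these perturbations are genuinely $o(1)$ relative to the signal — in particular that the estimated slope cannot conspire with the noise to create a spurious competitor far from the truth — is the technical crux of the argument.
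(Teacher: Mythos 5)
Your high-level plan --- exploit the constraint defining $\mathcal{R}'$ to make $D(r)=\sum_i(r(i)-\bar r)^2$ essentially constant and thereby reduce profile least squares to the fixed-slope feature matching of Theorems \ref{thm:upper-comp} and \ref{thm:upper-add} --- is close in spirit to the paper's, and you have correctly located the crux. But two steps of the reduction do not survive scrutiny. First, the passage from ``maximize $C(r)^2/D(r)$'' to ``maximize $C(r)$'' discards the negative branch, and this is not harmless: a competitor $\tilde r$ strongly anti-correlated with $r$ has $C(\tilde r)\approx -C(r)$ and hence $C(\tilde r)^2/D(\tilde r)\approx C(r)^2/D(r)$ even though $\|\tilde r-r\|^2\asymp n^3$. (For the exact reversal $\tilde r(i)=n+1-r(i)$, which lies in $\mathcal{R}'$ whenever $r$ does, $\text{PL}(\tilde r)=\text{PL}(r)$ identically, since the span of $\{\mathbbm{1},\tilde r\}$ equals that of $\{\mathbbm{1},r\}$.) So the assertion that ``the optimum lies on the positive branch'' is precisely what needs proof, and your justification --- that $C$ at the truth exceeds the noise --- does not address it. The paper instead works with the projection identity $\text{PL}(r)=\|(I-H_r)\hat S\|^2$ for the hat matrix (\ref{eq:hat-matrix}), so the relevant signal is $\tilde\beta^2\|(I-H_{\tilde r})r\|^2$ rather than a difference of $C$'s, and the degradation at large $\|\tilde r-r\|$ is tracked explicitly through the factor $\left(1-C\sqrt{m/n^3}\right)$ in Lemma \ref{lem:dev-PL}.

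Second, the perturbation you isolate, $\text{FM}(r)-\text{PL}(r)=D(r)(\hat b_r-\tilde\beta)^2$, is quadratic in the noise, and what must be controlled is the difference $D(\tilde r)(\hat b_{\tilde r}-\tilde\beta)^2-D(r)(\hat b_r-\tilde\beta)^2$, uniformly in $\tilde r$ and with a bound that scales with $\|\tilde r-r\|$: a crude bound of order $\sigma^2/n$ that does not decay with $m$ would, already at $m=1$ and SNR of constant order, shift the constant $1/4$ in the exponent. The sub-Gaussian condition (\ref{eq:error}) controls linear forms only; it does not by itself yield the needed maximal inequality for these quadratic forms. This is exactly where the paper brings in two ingredients you have not supplied: the geometric bound $\fnorm{H_r-H_{\tilde r}}\lesssim\sqrt{\|\tilde r-r\|^2/n^3}$, which makes the quadratic perturbation scale like $\sqrt{m/n^3}$ relative to the signal, and the Hanson--Wright inequality (together with, in the polynomial regime of Lemma \ref{lem:uniform-PL}, a discretization over rank-two matrices) to obtain tails matching $\exp\left(-\frac{n\beta^2m}{4\sigma^2}\right)$. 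Without these, the ``peeling over dyadic scales using (\ref{eq:error})'' you propose handles only the linear part of the comparison, and the argument stalls at the step you yourself flag as the technical crux.
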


\section{Discussion}\label{sec:disc}

\subsection{Comparison with Community Detection}

Our approximate ranking model shares some similarity with the stochastic block model that is widely studied in the problem of community detection. The relation between ranking and clustering was previously discussed in the paper \cite{chen2016information}. Our discussion here is from a different aspect. The goal of community detection is to partition the set $[n]$ into $k$ clusters. In the setting of stochastic block model, the observation can be written as
\begin{equation}
X_{ij}=\mu_{\tau(i)\tau(j)}+Z_{ij},\label{eq:SBM}
\end{equation}
where $\mu_{\tau(i)\tau(j)}$ is the expectation of $X_{ij}$, and it characterizes the interaction level between $i$ and $j$. The value of $\mu_{\tau(i)\tau(j)}$ is determined by the clustering labels $\tau(i)$ and $\tau(j)$. Note that the form (\ref{eq:SBM}) is exactly the same as (\ref{eq:ARM}). The literature usually studies stochastic block models with Bernoulli observations. However, to make comparison more intuitive, we consider the same sub-Gaussian setting as in (\ref{eq:error}).

Just like in the approximate ranking model, the goal here is to estimate the clustering labels $\tau\in\mathcal{C}_k$. In the most basic setting, the class $\mathcal{C}_k$ is considered as a subset of $[k]^n$ that consists of clustering labels that induce roughly equal-sized clusters. Recently, the minimax rate of estimating $\tau$ was derived by \cite{zhang2016minimax}. They impose the condition that
\begin{equation}
\mu_{\tau(i)\tau(j)}=p\mathbb{I}\{\tau(i)=\tau(j)\}+q\mathbb{I}\{\tau(i)\neq \tau(j)\}.\label{eq:zhang}
\end{equation}
The numbers $p$ and $q$ represent the within-cluster and the between-cluster interaction levels. In fact, we can write down an alternative condition in the style of (\ref{eq:signal}). Assume that for any $\tau,\tilde{\tau}\in\mathcal{C}_k$,
\begin{equation}
\sum_{1\leq i\neq j\leq n}(\mu_{\tau(i)\tau(j)}-\mu_{\tilde{\tau}(i)\tilde{\tau}(j)})^2 \geq 2\beta^2n^2\tilde{\ell}_0(\tau,\tilde{\tau}),\label{eq:gao}
\end{equation}
when $\tilde{\ell}_0(\tau,\tilde{\tau})=o(n/k)$. The loss function $\tilde{\ell}_0(\tau,\tilde{\tau})$ is defined in the same way as $\ell_0(\tilde{\tau},\tau)$ up to a permutation of clustering labels, so that it is more appropriate to measure the difference between two clustering structures. One can check that (\ref{eq:zhang}) implies (\ref{eq:gao}) with $\beta^2=(1+o(1))\frac{2(p-q)^2}{k}$. With similar techniques used in this paper, it can be shown that there is an estimator $\hat{\tau}$ that achieves
$$\mathbb{E}_{\tau}\tilde{\ell}_0(\hat{\tau},\tau)\leq \exp\left(-(1+o(1))\frac{n(p-q)^2}{4k\sigma^2}\right).$$
This is essentially the same result in \cite{zhang2016minimax} by replacing $\sigma^2$ with the variance parameter $\left(\frac{\sqrt{p}+\sqrt{q}}{2}\right)^2$ in the their Bernoulli setting. Moreover, it shares the same form of exponential rates in Theorem \ref{thm:upper1} in view of the relation $\beta^2=(1+o(1))\frac{2(p-q)^2}{k}$.

On the other hand, we also point out some major differences between approximate ranking and community detection. First of all, the ranking labels are ordered numbers, while the clustering labels have no ordering. Therefore, one can only measure whether the estimation of $\tau(i)$ is right or wrong by an indicator function. In comparison, one can not only measure whether each $r(i)$ is correctly estimated, but one can also measure the deviation in terms of the distance or the squared distance between $\hat{r}(i)$ and $r(i)$. Secondly, the approximate ranking model naturally has $n$ latent positions, and each $r(i)$ has $n$ possible values, while for the stochastic block model, there are only $k$ latent positions, and usually $k$ is assumed to be a constant or grow with $n$ very slowly in the literature. These two differences lead to the interesting phase transitions for the approximate ranking problem in our paper, and such a new phenomenon did not appear in community detection or in any other problems before.

\subsection{Results for Poisson Distributions}

In this section, we consider a natural Poisson model for discrete observations. We assume $X_{ij}\sim\text{Poisson}(\mu_{r(i)r(j)})$ independently for all $1\leq i\neq j\leq n$. Note that $\mu_{r(i)r(j)}$ models both mean and variance of the observation $X_{ij}$. Thus, it is more appropriate to consider the following condition instead of (\ref{eq:signal}). We assume there exists a $\beta\in\mathbb{R}$, such that for any $r,\tilde{r}\in[n]^n$,
\begin{equation}
\sum_{1\leq i\neq j\leq n}\left(\sqrt{\mu_{\tilde{r}(i)\tilde{r}(j)}}-\sqrt{\mu_{r(i)r(j)}}\right)^2 \geq 2n\beta^2\|\tilde{r}-r\|^2. \label{eq:signal-Poi}
\end{equation}
Compared with the condition (\ref{eq:signal}), the condition for the Poisson model involves $\sqrt{\mu_{r(i)r(j)}}$. The square-root transformation can be dated back to the famous variance-stabilizing transformation \cite{anscombe1948transformation}.

Instead of using the least-squares estimator (\ref{eq:LSE}), we propose the maximum likelihood estimator
$$\hat{r}=\argmax_{r\in\mathcal{R}}\prod_{1\leq i\neq j\leq n}\frac{\mu_{r(i)r(j)}^{X_{ij}}e^{-\mu_{r(i)r(j)}}}{X_{ij}!}.$$
\begin{thm}\label{thm:upper-Poi}
Consider any loss $\ell(\hat{r},r)$ with $q\in[0,2]$.
Under the condition (\ref{eq:signal-Poi}), we have
$$\sup_{r\in\mathcal{R}}\mathbb{E}_r\ell_q(\hat{r},r)\lesssim \begin{cases}
\exp\left(-(1+o(1))n\beta^2\right), & n\beta^2>1, \\
\left[\left(n\beta^2\right)^{-1}\log\left[\left(n\beta^2\right)^{-1}\right]\right]^{q/2}\wedge n^q, & n\beta^2\leq 1,
\end{cases}$$
where $o(1)$ denotes a vanishing sequence as $n\rightarrow\infty$.
\end{thm}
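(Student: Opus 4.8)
The plan is to mirror the proof of Theorem~\ref{thm:upper1}, since the maximum likelihood estimator here plays exactly the role the least-squares estimator plays there. The optimality of $\hat r$ gives a basic inequality: $\sum_{i\neq j}(X_{ij}\log\mu_{\hat r(i)\hat r(j)}-\mu_{\hat r(i)\hat r(j)})\geq\sum_{i\neq j}(X_{ij}\log\mu_{r'(i)r'(j)}-\mu_{r'(i)r'(j)})$ for every $r'\in\mathcal{R}$; in particular the event $\{\hat r=\tilde r\}$ is contained in the event that the Poisson log-likelihood prefers $\tilde r$ to $r$. Only two ingredients of the Theorem~\ref{thm:upper1} argument are model-specific: the concentration of the noise, which there is supplied by the sub-Gaussian bound~\eqref{eq:error}, and the signal strength, which there is supplied by~\eqref{eq:signal}. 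I would replace these by their Poisson counterparts and then run the same entropy and combinatorial bookkeeping. The point to verify up front is that the square-root transformation in~\eqref{eq:signal-Poi} calibrates the Poisson problem to an effective noise level $\sigma^2=1/4$, which is precisely why the signal-to-noise ratio $n\beta^2/(4\sigma^2)$ of the general theorem becomes $n\beta^2$ here, logarithmic factor and cap $n^q$ included.

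The model-specific computation I would carry out first is the exact Bhattacharyya/Hellinger affinity between products of Poissons. For a single pair, $\mathbb{E}_r\sqrt{d\mathbb{P}_{\tilde r}/d\mathbb{P}_r}=\exp(-\tfrac12(\sqrt{\mu_{\tilde r(i)\tilde r(j)}}-\sqrt{\mu_{r(i)r(j)}})^2)$, so by independence and Markov's inequality at the Chernoff parameter $1/2$,
\[ \mathbb{P}_r\Big(\sum_{i\neq j}\big(X_{ij}\log\mu_{\tilde r(i)\tilde r(j)}-\mu_{\tilde r(i)\tilde r(j)}\big)\geq\sum_{i\neq j}\big(X_{ij}\log\mu_{r(i)r(j)}-\mu_{r(i)r(j)}\big)\Big)\leq\exp\!\Big(-\tfrac12\sum_{i\neq j}(\sqrt{\mu_{\tilde r(i)\tilde r(j)}}-\sqrt{\mu_{r(i)r(j)}})^2\Big). \]
Combined with~\eqref{eq:signal-Poi} this is at most $\exp(-n\beta^2\|\tilde r-r\|^2)$, exactly the two-point bound one gets in the Gaussian case with $\sigma^2=1/4$. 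This identity, rather than a crude Bernstein bound, is what delivers the sharp constant.

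For the exponential regime $n\beta^2>1$ I would avoid a naive union bound over all of $\mathcal{R}$ (which loses a spurious $\log n$ factor in the exponent once $1<n\beta^2\lesssim\log n$) and instead localize coordinate by coordinate. If $\hat r(i)=s$, comparing $\hat r$ to the competitor obtained by reverting its $i$-th coordinate to $r(i)$ yields a one-coordinate likelihood comparison whose signal, by~\eqref{eq:signal-Poi}, is at least $2n\beta^2|s-r(i)|^2$; summing the resulting bound $\sum_{d\geq1}d^q\exp(-n\beta^2 d^2)\lesssim\exp(-(1+o(1))n\beta^2)$ over the single displaced coordinate costs no combinatorial entropy. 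In the polynomial regime $n\beta^2\leq1$ the discreteness is immaterial and I would use the entropy calculation of Theorem~\ref{thm:upper1}: grouping candidates $\tilde r$ by $\|\tilde r-r\|^2$ and balancing the affinity bound against the metric entropy of $\mathcal{R}$ produces $\{(n\beta^2)^{-1}\log[(n\beta^2)^{-1}]\}^{q/2}$, capped at $n^q$. As in Theorem~\ref{thm:upper1}, the logarithmic factor survives precisely because no upper signal bound analogous to~\eqref{eq:signal-entropy} is imposed.

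The main obstacle is the localization step, because the reverted competitor differs from $\hat r$ only in coordinate $i$ while the remaining coordinates $\hat r_{-i}$ are random and data-dependent, so the localized likelihood gain is not literally the clean single-coordinate test statistic analyzed above. Handling this requires a preliminary consistency bound placing $\hat r$ in a small neighborhood of $r$ with high probability, together with a uniform control, over all competitors in that neighborhood, of the fluctuation of $\sum_{j\neq i}W_{ij}\log(\mu_{s\hat r(j)}/\mu_{r(i)\hat r(j)})$ with $W_{ij}=X_{ij}-\mu_{r(i)r(j)}$. This is the Poisson analog of invoking~\eqref{eq:error} uniformly, and it is delicate because the log-likelihood weights $\log(\mu_{\tilde r}/\mu_r)$ can be large when the means are small; I would therefore need a mild lower bound on the $\mu$'s or a truncation argument to keep the relevant moment generating functions under control. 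Once this fluctuation is shown to be of smaller order than the per-coordinate signal $n\beta^2$, the localized test reduces to the two-point computation and the stated rates follow.
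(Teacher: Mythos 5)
Your model-specific computation is exactly the paper's Lemma \ref{lem:dev-Poi}: applying Markov's inequality at the Chernoff parameter $1/2$ and using the closed form of the Poisson Bhattacharyya affinity gives
$\mathbb{P}_r\bigl(\prod_{i\neq j}p(X_{ij}|\mu_{\tilde r(i)\tilde r(j)})\geq \prod_{i\neq j}p(X_{ij}|\mu_{r(i)r(j)})\bigr)\leq \exp\bigl(-\tfrac12\sum_{i\neq j}(\sqrt{\mu_{\tilde r(i)\tilde r(j)}}-\sqrt{\mu_{r(i)r(j)}})^2\bigr)\leq \exp(-n\beta^2\|\tilde r-r\|^2)$,
which is the correct replacement for Lemma \ref{lem:dev} with effective noise level $\sigma^2=1/4$. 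Your treatment of the polynomial regime by the entropy and cardinality bookkeeping of Theorem \ref{thm:upper1} (retaining the logarithmic factor, since no analogue of (\ref{eq:signal-entropy}) is assumed) is also what the paper does.

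The gap is in the exponential regime. Your reason for abandoning the union bound --- that it ``loses a spurious $\log n$ factor in the exponent once $1<n\beta^2\lesssim\log n$'' --- does not apply to this loss function: the union bound over $\mathcal{R}_1$ costs a factor $|\mathcal{R}_1|\lesssim n$, but $\ell_q$ carries a $1/n$ normalization, so a bound of the form $\mathbb{E}_r\|\hat r-r\|^2\lesssim n\exp(-(1+o(1))n\beta^2)$ already yields $\mathbb{E}_r\ell_2(\hat r,r)\lesssim\exp(-(1+o(1))n\beta^2)$ with the sharp constant intact; the window $1<n\beta^2\lesssim\log n$ is handled by the $m_0$-truncation device of Case 2 in the proof of Theorem \ref{thm:upper1}, not by a sharper per-competitor test. (The loss you fear is real only for the unnormalized quantity $\mathbb{P}_r(\hat r\neq r)$, which is exactly why exact recovery in Theorem \ref{thm:exact-Poi} demands $n\beta^2>\log n$.) The coordinate-by-coordinate localization you substitute is, by your own admission, not carried out: the reverted competitor $(r(i),\hat r_{-i})$ is data-dependent, and closing that hole would require a preliminary consistency step, uniform fluctuation control of the log-likelihood increments, and auxiliary assumptions (a lower bound on the $\mu$'s, or truncation) that are not among the hypotheses. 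Since Lemma \ref{lem:dev-Poi} combined with Lemma \ref{lem:card} and the four-case argument of Theorem \ref{thm:upper1}, with every occurrence of $\frac{n\beta^2}{4\sigma^2}$ replaced by $n\beta^2$, already proves the theorem under (\ref{eq:signal-Poi}) alone, you should discard the localization step and run that argument verbatim.
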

Again, Theorem \ref{thm:upper-Poi} exhibits different behaviors of the ranking errors depending on the regime of $n\beta^2$. Here, the signal-to-noise ratio is $n\beta^2$, and it plays the same role as that of $\frac{n\beta^2}{4\sigma^2}$ in Theorem \ref{thm:upper1}.

We also give a complementary lower bound.
\begin{thm}\label{thm:lower-Poi}
Consider any loss $\ell(\hat{r},r)$ with $q\in[0,2]$.
There exists a Poisson distribution that satisfies (\ref{eq:signal-Poi}), such that
$$\inf_{\hat{r}}\sup_{r\in\mathcal{R}}\mathbb{E}_r\ell_q(\hat{r},r)\gtrsim \begin{cases}
\exp\left(-(1+o(1))n\beta^2\right), & n\beta^2>1, \\
\left(n\beta^2\right)^{-q/2}\wedge n^q, & n\beta^2\leq 1,
\end{cases}$$
where $o(1)$ denotes a vanishing sequence as $n\rightarrow\infty$.
\end{thm}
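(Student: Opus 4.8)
The plan is to mirror the construction and the two-regime analysis behind Theorem~\ref{thm:lower}, replacing the Gaussian likelihood computations by their Poisson analogues. The starting point is an explicit model defined on the square-root scale: set $\sqrt{\mu_{ab}}=C+\beta(a-b)$ for $a,b\in[n]$, where $C$ is taken large enough (say $C\geq 2\beta n$) that every mean $\mu_{ab}=(C+\beta(a-b))^2$ diverges. Writing $d_i=\tilde r(i)-r(i)$, one has $\sum_{1\leq i\neq j\leq n}(\sqrt{\mu_{\tilde r(i)\tilde r(j)}}-\sqrt{\mu_{r(i)r(j)}})^2=\beta^2\sum_{i\neq j}(d_i-d_j)^2=\beta^2\big(2n\|\tilde r-r\|^2-2(\sum_i d_i)^2\big)$, and since $|\sum_i d_i|\leq 2c_n=o(\sqrt n)$ on $\mathcal{R}$, this equals $(1+o(1))\,2n\beta^2\|\tilde r-r\|^2$, so (\ref{eq:signal-Poi}) holds with near-equality. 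The two elementary Poisson facts I would record at the outset are the Hellinger affinity $\int\sqrt{p_{\lambda_1}p_{\lambda_2}}=\exp(-\tfrac12(\sqrt{\lambda_1}-\sqrt{\lambda_2})^2)$ and the Chernoff exponent $\max_{s\in[0,1]}\big[(1-s)\lambda_1+s\lambda_2-\lambda_1^{1-s}\lambda_2^s\big]$, whose maximizer tends to $s=\tfrac12$ as $\lambda_1/\lambda_2\to1$, in which case it coincides with the Bhattacharyya value $\tfrac12(\sqrt{\lambda_1}-\sqrt{\lambda_2})^2$.

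For the exponential regime $n\beta^2>1$, I would run an Assouad-type argument over the $n$ objects, using a prior that makes each $r(i)$ uniform on a pair of adjacent interior values and is balanced so as to stay inside $\mathcal{R}$. Since $|x|^q\geq\mathbb{I}\{x\neq0\}$ for integer $x$, we have $\ell_q\geq\ell_0=\tfrac1n\sum_i\mathbb{I}\{\hat r(i)\neq r(i)\}$, so it suffices to lower bound the per-object Bayes testing error. Flipping object $i$ by one perturbs exactly the $2(n-1)$ entries in its row and column, each changing the square-root mean by $\pm\beta$; because all means diverge, $\lambda_1/\lambda_2\to1$ uniformly, so the total Chernoff exponent of this product testing problem is $(1+o(1))\tfrac12\sum(\sqrt{\mu_1}-\sqrt{\mu_2})^2=(1+o(1))n\beta^2$. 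A sharp large-deviation bound for the Bayes error of the Poisson likelihood-ratio test then gives per-object error at least $\exp(-(1+o(1))n\beta^2)$, and averaging over $i$ yields the claimed exponential rate for every $q\in[0,2]$.

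For the polynomial regime $n\beta^2\leq1$, I would again apply Assouad but at resolution $\delta=\lfloor(n\beta^2)^{-1/2}\rfloor\wedge n$, letting each $r(i)$ range over $\{a_i,a_i+\delta\}$. Flipping one object now moves each of its $2(n-1)$ square-root means by $\pm\delta\beta$, so the total squared Hellinger separation is $(1+o(1))\,2n\delta^2\beta^2\asymp1$ by the choice of $\delta$; hence each per-object test has error bounded away from zero, and the Hamming-weighted Assouad bound gives $\mathbb{E}_r\ell_q\gtrsim\delta^q\asymp(n\beta^2)^{-q/2}\wedge n^q$, with the cap $n^q$ forced by the constraint $r(i)\in[n]$.

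The main obstacle is the sharp constant in the exponential regime. The crude affinity bound $1-\TV\geq\tfrac12\rho^2$ loses a factor of two in the exponent, so I would instead establish a tight two-sided large-deviation estimate for the log-likelihood ratio $\sum_{j}\log\frac{p_{\lambda_j^+}(X_{ij})}{p_{\lambda_j^-}(X_{ij})}$, exploiting that the diverging means make the summands asymptotically Gaussian with optimal exponential tilt at $s=\tfrac12$; this is exactly where the equality between the Poisson Chernoff information and the Bhattacharyya distance $\tfrac12\sum(\sqrt{\mu_1}-\sqrt{\mu_2})^2=(1+o(1))n\beta^2$ must be made quantitative and uniform over the $n$ objects. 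The remaining bookkeeping is to check that simultaneous flips in the Assouad hypercube interact only through the negligible $O(1)$ overlapping entries, so that the per-coordinate exponents add without loss.
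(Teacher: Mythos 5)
Your overall architecture is sound and, for the exponential regime, essentially coincides with the paper's: the paper also reduces to a two-point Poisson test between ranks differing by one in a single coordinate and obtains the sharp constant by exponential tilting at $t=\tfrac12(1+o(1))$, bounding $\log\int p^{1-t}q^{t}$ by $(1+o(1))\tfrac12\sum_i(\sqrt{\mu_i}-\sqrt{\tilde\mu_i})^2=(1+o(1))n\beta^2$ via a Taylor expansion (valid because all means diverge) and controlling the remaining probability factor by a CLT/Lyapunov argument under the tilted measure $P_t=\otimes_i\mathrm{Poisson}(\mu_i^{1-t}\tilde\mu_i^{t})$. So your identification of the Chernoff--Bhattacharyya coincidence as the crux is exactly right. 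For the polynomial regime the paper instead recycles the Fano/Varshamov--Gilbert packing from Theorem \ref{thm:lower}, bounding the KL diameter by $D(\mathrm{Poi}(\mu)\|\mathrm{Poi}(\tilde\mu))\leq(\mu-\tilde\mu)^2/\tilde\mu=4(1+o(1))(\sqrt{\mu}-\sqrt{\tilde\mu})^2$; your Assouad argument at resolution $\delta\asymp(n\beta^2)^{-1/2}\wedge n$ is a legitimate alternative and gives the same rate, provided you (i) pair coordinates antisymmetrically (as the paper does with $t(i)=-t(n-i)$) so that each hypercube flip preserves $\sum_i r(i)$ and the configurations stay in $\mathcal{R}$, and (ii) note that although each $X_{ij}$ depends on two coordinates, the conditional two-point separation is uniform over the remaining coordinates, so the per-coordinate Bayes errors still add.

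There is one concrete flaw in your construction. The condition (\ref{eq:signal-Poi}) is stated for all $r,\tilde r\in[n]^n$, not just for $r,\tilde r\in\mathcal{R}$. With $\sqrt{\mu_{ab}}=C+\beta(a-b)$ and $d_i=\tilde r(i)-r(i)$ you get $\sum_{i\neq j}(\sqrt{\mu_{\tilde r(i)\tilde r(j)}}-\sqrt{\mu_{r(i)r(j)}})^2=\beta^2\bigl(2n\|d\|^2-2(\sum_i d_i)^2\bigr)$, which vanishes identically when $\tilde r=r+c\mathbbm{1}$; so your model violates (\ref{eq:signal-Poi}) on $[n]^n$ for every $\beta\neq 0$, and your verification ``holds with near-equality'' is only valid after restricting to $\mathcal{R}$. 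This is precisely why the paper takes the \emph{additive} form $\sqrt{\mu_{ij}}=2\alpha+\tilde\beta(i+j)$ with $\alpha=\tilde\beta n^2$: there the cross term enters with a plus sign, $\sum_{i\neq j}(d_i+d_j)^2=2(n-2)\|d\|^2+2(\sum_i d_i)^2\geq 2(n-2)\|d\|^2$, so the condition holds on all of $[n]^n$ with $\beta=(1+o(1))\tilde\beta$. Replacing $a-b$ by $a+b$ in your construction repairs this without affecting any of your subsequent computations, since the per-entry square-root perturbations still have magnitude $\delta\beta$ and the diverging-mean condition $\max_{i,j,i',j'}\sqrt{\mu_{ij}/\mu_{i'j'}}=1+o(1)$ is preserved.
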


The lower bound results imply that the rates that we obtain in Theorem \ref{thm:upper-Poi} are optimal up to a logarithmic factor in the polynomial regime where $n\beta^2\leq 1$. Different from what we have for the model (\ref{eq:ARM}), it is not clear whether such a logarithmic factor can be removed in the upper bounds with some extra condition for the Poisson model.

To close this section, we give sufficient and necessary conditions for exact recovery in the following theorem.
\begin{thm}\label{thm:exact-Poi}
Under the condition (\ref{eq:signal-Poi}), if we further assume that $\liminf_n\frac{n\beta^2}{\log n}>1$, then the MLE satisfies
$$\inf_{r\in\mathcal{R}}\mathbb{P}_r(\hat{r}=r)\rightarrow 1,$$
as $n\rightarrow\infty$. Moreover, when $\limsup_n\frac{n\beta^2}{\log n}<1$, there exists a Poisson distribution that satisfies (\ref{eq:signal-Poi}), such that
$$\liminf_n\inf_{\hat{r}}\sup_{r\in\mathcal{R}}\mathbb{P}_r(\hat{r}\neq r)\geq \frac{1}{2}-o(1).$$
\end{thm}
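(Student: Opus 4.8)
The plan is to prove the two halves of the theorem — the sufficiency of $\liminf_n\frac{n\beta^2}{\log n}>1$ for exact recovery by the MLE, and the matching impossibility under $\limsup_n\frac{n\beta^2}{\log n}<1$ — by the Poisson-specific device that replaces the sub-Gaussian Chernoff step of Theorems \ref{thm:exact} and \ref{thm:exact-lower} with the exact Hellinger affinity of Poisson measures.

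\textbf{Achievability.} Writing $L(r)$ for the Poisson likelihood, $\hat r\neq r$ forces $L(\hat r)\geq L(r)$, so a union bound over $\mathcal{R}$ together with Markov's inequality at tilt $1/2$ gives $\mathbb{P}_r(\hat r\neq r)\leq \sum_{\tilde r\neq r}\mathbb{E}_r\exp(\tfrac12\log\tfrac{L(\tilde r)}{L(r)})$. For independent Poissons the expectation factorizes over ordered pairs, and the one-coordinate identity $\mathbb{E}_{\mathrm{Poi}(\lambda)}(\mu/\lambda)^{X/2}e^{-(\mu-\lambda)/2}=\exp(-\tfrac12(\sqrt\lambda-\sqrt\mu)^2)$ is exactly the Hellinger affinity, so that
\begin{equation*}
\mathbb{E}_r\exp\!\left(\tfrac12\log\tfrac{L(\tilde r)}{L(r)}\right)=\exp\!\left(-\tfrac12\sum_{i\neq j}\big(\sqrt{\mu_{r(i)r(j)}}-\sqrt{\mu_{\tilde r(i)\tilde r(j)}}\big)^2\right)\leq \exp\!\left(-n\beta^2\|\tilde r-r\|^2\right),
\end{equation*}
the last step being (\ref{eq:signal-Poi}). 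Relaxing $\mathcal{R}$ to $[n]^n$ decouples the sum across coordinates, $\sum_{\tilde r}\exp(-n\beta^2\|\tilde r-r\|^2)\leq\prod_i\sum_{t\in[n]}\exp(-n\beta^2(t-r(i))^2)\leq(1+Ce^{-n\beta^2})^n$, whence $\mathbb{P}_r(\hat r\neq r)\leq\exp(Cne^{-n\beta^2})-1$. Since $\liminf_n\frac{n\beta^2}{\log n}>1$ implies $n\beta^2-\log n\to\infty$, we get $ne^{-n\beta^2}\to0$ and the bound vanishes uniformly in $r$; note the constant multiplying $n\beta^2$ is exactly $1$, so no $o(1)$ is needed here.

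\textbf{The impossibility construction and reduction.} I would exhibit the Poisson model with $\sqrt{\mu_{st}}=\alpha_0+\beta'(s+t)$, an additive-in-square-root analogue of (\ref{eq:def-add}), with $\alpha_0$ a large fixed constant keeping the means positive and $\beta'=(1+o(1))\beta$. Writing $\delta_i=\tilde r(i)-r(i)$ one has $\sum_{i\neq j}(\sqrt{\mu_{\tilde r(i)\tilde r(j)}}-\sqrt{\mu_{r(i)r(j)}})^2=(\beta')^2\sum_{i\neq j}(\delta_i+\delta_j)^2\geq 2n\beta^2\|\tilde r-r\|^2$, where the cross term $(\sum_i\delta_i)^2$ is negligible on $\mathcal{R}$ because $|\sum_i\delta_i|\leq 2c_n=o(\sqrt n)$; thus (\ref{eq:signal-Poi}) holds. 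Taking the centered identity as the truth, I would reduce exact recovery to the $n$ one-coordinate tests: a genie revealing $\{r(j)\}_{j\neq i}$ can only help, and deciding $r(i)$ against its neighbor $r(i)\pm1$ uses only the data $\{X_{ij},X_{ji}\}_{j\neq i}$.

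\textbf{The crux.} Each single-coordinate test is between two product Poisson laws whose total squared square-root separation is $\sum_{j\neq i}2(\beta')^2=(1+o(1))2n\beta^2$, so its Hellinger affinity is $\exp(-(1+o(1))n\beta^2)$. The main obstacle is that the lossy inequality $\int\min\geq\tfrac12(\text{affinity})^2$ would only give the exponent $2n\beta^2$ and hence the wrong threshold $\tfrac12\log n$; I must instead establish the \emph{sharp} constant, namely that the Bayes error is $\geq\exp(-(1+o(1))n\beta^2)$. This is a Poisson Bahadur–Rao computation: the log-likelihood ratio is a sum of independent tilted Poissons whose optimal exponential tilt sits at $t=1/2$ by the symmetry of the two hypotheses, where the Cramér rate coincides with the Bhattacharyya exponent $\tfrac12\sum_{j\neq i}(\sqrt{\mu}-\sqrt{\mu'})^2=(1+o(1))n\beta^2$, and the subexponential prefactor leaves the exponent unchanged. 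Hence each coordinate is misranked with probability $p_n\geq\exp(-(1+o(1))n\beta^2)$.

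\textbf{Aggregation and conclusion.} Under $\limsup_n\frac{n\beta^2}{\log n}<1$ this yields $p_n\geq n^{-(1-\epsilon)}$, so $np_n\to\infty$. Because the tests of coordinates $i$ and $k$ share only the single pair $X_{ik},X_{ki}$, the $n$ genie events are asymptotically independent, and a Chen–Stein (or second-moment) argument shows the number of misranked coordinates is at least one with probability tending to $1$; this gives $\liminf_n\inf_{\hat r}\sup_r\mathbb{P}_r(\hat r\neq r)\geq\tfrac12-o(1)$ (in fact close to $1$). Together the two directions pin the exact-recovery threshold at $n\beta^2=(1+o(1))\log n$, the Poisson analogue of Theorems \ref{thm:exact} and \ref{thm:exact-lower}.
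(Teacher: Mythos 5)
Your achievability argument is sound and matches the paper in substance: the one-pair identity you use is exactly Lemma \ref{lem:dev-Poi} (the Hellinger affinity bound $\exp(-n\beta^2\|\tilde r-r\|^2)$), and your direct union bound over $[n]^n$ with the product decoupling is a clean, slightly more self-contained alternative to the paper's route, which instead applies Markov's inequality to the bound $\mathbb{E}_r\ell_0(\hat r,r)\leq\exp(-(1+o(1))n\beta^2)$ from Theorem \ref{thm:upper-Poi}. For the impossibility half the two arguments genuinely diverge. The paper keeps the $n$ single-swap log-likelihood ratios together as a dependent family: it reduces to the MLE via the uniform-prior Bayes risk, writes $\mathbb{P}_r(\hat r\neq r)\geq\mathbb{P}_r\left(\max_{l}Y_l>\sqrt{2n}(1+o(1))\beta\right)$ for standardized statistics $Y_l$, invokes the high-dimensional Gaussian approximation of Chernozhukov et al., and then applies Lemma \ref{lem:hartigan} on the maximum of dependent Gaussians to get the constant $1/2$. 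You instead propose sharp per-coordinate error probabilities (your ``Poisson Bahadur--Rao'' step is exactly the exponential tilting computation the paper carries out in the proof of Theorem \ref{thm:lower-Poi}, so that ingredient is available) followed by a second-moment/Chen--Stein aggregation exploiting that the $n$ tests share only one pair of observations pairwise. Your route, if completed, gives the stronger conclusion $\mathbb{P}_r(\hat r\neq r)\to1$; the paper's route avoids any independence bookkeeping at the price of importing a Gaussian comparison theorem.

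Two points need repair. First, the aggregation cannot be run on abstract ``genie-aided tests'': a genie argument lower-bounds each marginal error probability, but the event ``at least one coordinate is misranked'' must be tied to a single concrete estimator. The fix is the paper's reduction to the MAP under the uniform prior, after which $\{\hat r\neq r\}\supseteq\bigcup_l A_l$ with $A_l=\{L(r_l)>L(r)\}$, and Paley--Zygmund applies to $N=\sum_l\mathbb{I}_{A_l}$. Even then, $\mathbb{E}N=np_n\to\infty$ does not suffice: you must prove $\mathbb{P}_r(A_l\cap A_k)\leq(1+o(1))p_n^2$, and since the $A_l$ are large-deviation events of probability about $n^{-(1-\delta)}$, ``sharing only two of $2(n-1)$ summands'' is not automatically negligible --- you need to condition on $(X_{lk},X_{kl})$ and check that the resulting $O_P(n^{-1/2})$ shift of the standardized threshold changes the tail probability only by a $1+o(1)$ factor. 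Second, your construction takes $\alpha_0$ to be a fixed constant, whereas the paper sets $\alpha=\tilde\beta n^2$ so that all means are multiplicatively within $1+o(1)$ of each other; with a fixed $\alpha_0$ the per-pair relative perturbations are still $O(\beta')=o(1)$, so the Taylor expansions survive, but the means then span a multiplicatively wide range and you should verify the Lyapunov condition for the tilted CLT explicitly rather than by appeal to homogeneity.
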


\subsection{Other Ranking Models}

Our paper gives a general analysis of the optimal rates of the approximate ranking model (\ref{eq:ARM}). Adaptive procedures are discussed for two special cases of differential comparison and additive collaboration. We expect the analysis can be extended to derive exact optimal rates and phase transitions for many other models. For example, the popular Bradley-Terry-Luce model \citep{bradley1952rank,luce2005individual} considers the form $\mu_{r(i)r(j)}=\frac{\theta_{r(i)}}{\theta_{r(i)}+\theta_{r(j)}}$, a special case of (\ref{eq:latent-ability}). Ranking problems under this setting were studied by \cite{negahban2012iterative} and references therein. 
Besides the parametric models, an interesting nonparametric stochastically transitive model was proposed and analyzed in \cite{shah2016stochastically,shah2015simple}. Finally, a simple noisy sorting model that assumes $\mu_{r(i)r(j)}\geq \frac{1}{2}+\gamma$ if $r(i)<r(j)$ was considered in \cite{braverman2008noisy}. The minimax rate for this model was recently derived by \cite{mao2017minimax}.

We remark that all of these models proposed in the literature can be written in some modified versions of (\ref{eq:signal}). However, since these papers all consider Bernoulli observations and the space of permutations, our noise condition (\ref{eq:error}) and the setting of approximate ranking may not be appropriate. Despite these differences, we still believe all the phase transition boundaries discovered in our paper have analogous correspondence in these Bernoulli models. This will be an immediate interesting project to follow in the future.

\section{Proofs}\label{sec:proof}

In this section, we give proofs of all the results in the paper. From Section \ref{sec:pf-thm-first} to Section \ref{sec:pf-thm-last}, we state the proofs of the main results, with the help of some technical lemmas. The proof of these auxiliary lemmas will be given in Section \ref{sec:pf-aux}.

\subsection{Proofs of Theorem \ref{thm:upper1} and Theorem \ref{thm:upper2}}\label{sec:pf-thm-first}

Before stating the proofs, we need some lemmas.
We use $L(r)$ to denote the objective function $\sum_{i\neq j}(X_{ij}-\mu_{r(i)r(j)})^2$. For a fixed $r$, we define
\begin{equation}
\mathcal{R}_m=\left\{\tilde{r}\in\mathcal{R}:\|\tilde{r}-r\|^2=m\right\}.\label{eq:def-rm}
\end{equation}
In other words, the set $\mathcal{R}_m$ collects those $\tilde{r}$'s that have errors $m$ in terms of the squared $\ell_2$ norm. This results in a partition of $\mathcal{R}$, which is
$$\mathcal{R}=\cup_m\mathcal{R}_m.$$
The following two lemmas are important to prove the main results, and their proofs will be given in Section \ref{sec:pf-aux}.
\begin{lemma}\label{lem:dev}
Assume the conditions (\ref{eq:error}) and (\ref{eq:signal}).
For any $m$ such that $\mathcal{R}_m\neq\varnothing$, we have
$$\max_{\tilde{r}\in\mathcal{R}_m}\mathbb{P}_r\left(L(\tilde{r})\leq L(r)\right)\leq\exp\left(-\frac{n\beta^2m}{4\sigma^2}\right).$$
\end{lemma}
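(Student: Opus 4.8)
The plan is to run the standard ``basic inequality'' argument for the least-squares estimator, reducing the event $\{L(\tilde r)\le L(r)\}$ to a one-sided deviation of a linear functional of the sub-Gaussian noise. Since the bound we seek depends on $\tilde r$ only through $m=\|\tilde r-r\|^2$, it suffices to bound $\mathbb{P}_r(L(\tilde r)\le L(r))$ for an arbitrary fixed $\tilde r\in\mathcal{R}_m$; the maximum over $\mathcal{R}_m$ is then controlled by the very same quantity, so no union bound is needed at this stage.

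First I would expand the difference $L(\tilde r)-L(r)$. Writing $X_{ij}=\mu_{r(i)r(j)}+Z_{ij}$ under $\mathbb{P}_r$ and setting $\Delta_{ij}=\mu_{\tilde r(i)\tilde r(j)}-\mu_{r(i)r(j)}$, the quadratic noise terms $Z_{ij}^2$ cancel and one obtains
\[
L(\tilde r)-L(r)=\sum_{i\ne j}\Delta_{ij}^2-2\sum_{i\ne j}\Delta_{ij}Z_{ij}.
\]
Hence the event $\{L(\tilde r)\le L(r)\}$ is exactly $\{2\sum_{i\ne j}\Delta_{ij}Z_{ij}\ge D^2\}$, where $D^2:=\sum_{i\ne j}\Delta_{ij}^2$.

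Next I would normalize. For $m>0$, the signal condition (\ref{eq:signal}) gives $D^2\ge 2n\beta^2 m>0$, so the coefficients $v_{ij}:=\Delta_{ij}/D$ satisfy $\sum_{i\ne j}v_{ij}^2=1$ and the event becomes $\{\sum_{i\ne j}v_{ij}Z_{ij}\ge D/2\}$. Applying the sub-Gaussian tail condition (\ref{eq:error}) with $t=D/2$ yields
\[
\mathbb{P}_r\!\left(L(\tilde r)\le L(r)\right)\le \exp\!\left(-\frac{D^2}{8\sigma^2}\right),
\]
and substituting the lower bound $D^2\ge 2n\beta^2 m$ from (\ref{eq:signal}) produces $\exp(-n\beta^2 m/(4\sigma^2))$, as claimed. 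The degenerate case $m=0$ forces $\tilde r=r$, and the bound then reads $1\le 1$.

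I do not expect a genuine obstacle here; the argument is essentially algebraic once the expansion is performed. The only points requiring care are checking that the $Z_{ij}^2$ terms cancel so that the deviation is a genuinely \emph{linear} functional of the noise (this is precisely what makes (\ref{eq:error}) directly applicable), verifying $D>0$ so the normalization is legitimate, and handling the boundary between $\ge$ and $>$ in the tail bound, which is immediate since $\{X\ge t\}\subseteq\{X>s\}$ for every $s<t$. The real difficulty of the paper lies not in this lemma but in its subsequent use: summing the bound over $m$ against an entropy count of $|\mathcal{R}_m|$ to control $\sum_m\sum_{\tilde r\in\mathcal{R}_m}\mathbb{P}_r(L(\tilde r)\le L(r))$, which is where the polynomial/exponential phase transition ultimately emerges.
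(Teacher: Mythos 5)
Your proposal is correct and follows essentially the same route as the paper: expand $L(\tilde r)-L(r)$ to reduce the event to a one-sided linear deviation of the noise, normalize by $D=\bigl(\sum_{i\neq j}\Delta_{ij}^2\bigr)^{1/2}$, apply the sub-Gaussian condition (\ref{eq:error}) with $t=D/2$, and then invoke (\ref{eq:signal}) to lower-bound $D^2$ by $2n\beta^2 m$. The paper's proof is identical in substance (written with the opposite sign convention on the linear term), so there is nothing further to reconcile.
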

\begin{lemma}\label{lem:card}
For each $m$, the cardinality of $\mathcal{R}_m$ is bounded as
$$|\mathcal{R}_m|\leq \begin{cases}
\left(\frac{2e^2n}{m}\right)^m, & 1\leq m\leq n,\\
\left(\frac{8em}{n}\right)^n, & n\leq m\leq n^2, \\
n^n, & m>n^2.
\end{cases}$$
\end{lemma}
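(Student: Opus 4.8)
The plan is to bound $|\mathcal{R}_m|$ by counting integer vectors rather than reasoning about $\mathcal{R}$ directly. Fix $r$ and, for $\tilde{r}\in\mathcal{R}_m$, write $d=\tilde{r}-r$. Since each coordinate of $\tilde{r}$ and $r$ lies in $[n]$, the map $\tilde{r}\mapsto d$ is injective and produces a vector $d\in\mathbb{Z}^n$ with $\sum_{i=1}^n d_i^2=m$. Dropping the membership constraint $\tilde{r}\in\mathcal{R}$ only enlarges the count, so I would first reduce to $|\mathcal{R}_m|\le \#\{d\in\mathbb{Z}^n:\sum_i d_i^2=m\}$, i.e.\ to counting lattice points on a sphere of squared radius $m$.

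The key observation, and the step that makes everything clean, is that for an integer $t$ one has $|t|\le t^2$, hence $\sum_i|d_i|\le\sum_i d_i^2=m$. This relaxes the spherical constraint to a count of lattice points in the $\ell_1$-ball of radius $m$, namely $\#\{d:\sum_i d_i^2=m\}\le\#\{d\in\mathbb{Z}^n:\sum_i|d_i|\le m\}$. The latter I would evaluate by separating magnitudes from signs: setting $e_i=|d_i|$, the number of nonnegative integer vectors with $\sum_i e_i\le m$ is $\binom{n+m}{n}$ by stars and bars (introducing one slack variable), and each such $e$ corresponds to at most $2^{\#\{i:e_i>0\}}\le 2^{\min(n,m)}$ sign choices. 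This yields the single clean estimate $|\mathcal{R}_m|\le 2^{\min(n,m)}\binom{n+m}{n}$.

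It then remains to specialize this bound in the three regimes using $\binom{N}{k}\le(eN/k)^k$. For $1\le m\le n$ we have $\min(n,m)=m$ and $n+m\le 2n$, so $2^m\binom{n+m}{m}\le 2^m(2en/m)^m=(4en/m)^m\le(2e^2 n/m)^m$. For $n\le m\le n^2$ we have $\min(n,m)=n$ and $n+m\le 2m$, so $2^n\binom{n+m}{n}\le 2^n(2em/n)^n=(4em/n)^n\le(8em/n)^n$; the slack between $4e$ and $2e^2$ (resp.\ $4e$ and $8e$) is exactly what absorbs the crude constants. For $m>n^2$ I would simply invoke the trivial containment $\mathcal{R}_m\subseteq[n]^n$ to get $|\mathcal{R}_m|\le n^n$.

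I expect no serious obstacle: the only genuine idea is the integer inequality $|d_i|\le d_i^2$, which converts the spherical constraint into a cross-polytope constraint, after which the count is a textbook stars-and-bars computation and the remainder is constant-chasing. An alternative route would stratify by the support size $k=|\supp(d)|$ and sum $\sum_k\binom{n}{k}2^k\binom{m}{k}$, with $\binom{m}{k}$ arising because $\sum|d_i|\le m$; the same bounds follow, but one must then argue that the dominant term controls the polynomially many terms. The $\ell_1$-ball formulation sidesteps that bookkeeping entirely, which is why I would prefer it.
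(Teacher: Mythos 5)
Your proof is correct, and it takes a genuinely different (and cleaner) combinatorial route than the paper. The paper counts $\mathcal{R}_m$ by stratifying over the support size $l$ of $d=\tilde r-r$: it picks the $l$ positions in $\binom{n}{l}$ ways, bounds the number of magnitude patterns by $\binom{m-1}{l-1}$ via the injection $(n_1,\dots,n_l)\mapsto(n_1^2,\dots,n_l^2)$ into compositions of $m$ (using that squaring is injective on positive integers), attaches $2^l$ signs, and then sums over $l\le\min(m,n)$, handling each regime by arguing that the largest term dominates the polynomially many terms (e.g.\ the step $2^n\sum_{l=1}^n\exp(l\log\tfrac{2em}{l})\le 2^n\, n\,\exp(n\log\tfrac{2em}{n})$, with the extra factor of $n$ absorbed into the constant $8e$). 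This is exactly the ``alternative route'' you sketch in your last paragraph. Your main argument instead collapses the whole count into the single relaxation $\sum_i|d_i|\le\sum_i d_i^2=m$ followed by a stars-and-bars count of the lattice $\ell_1$-ball, giving the uniform bound $2^{\min(n,m)}\binom{n+m}{n}$, from which both regimes drop out of $\binom{N}{k}\le(eN/k)^k$ with no sum over support sizes and no dominant-term argument. Both proofs exploit integrality of the coordinates of $d$, just through different inequalities ($|d_i|\le d_i^2$ for you, injectivity of $n_i\mapsto n_i^2$ for the paper), and both land within the slack the lemma's constants already provide ($4e\le 2e^2$ and $4e\le 8e$ in your case). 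Your bookkeeping is the lighter of the two; the paper's stratified form has the minor advantage that the per-$l$ term $\binom{n}{l}\binom{m-1}{l-1}2^l$ is reusable if one ever needs the count restricted to a fixed support size, but for the stated lemma nothing is lost.
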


Now we are ready to prove Theorem \ref{thm:upper1}.
\begin{proof}[Proof of Theorem \ref{thm:upper1}]
We first give a bound for $\mathbb{E}_r\|\hat{r}-r\|^2$. Different arguments will be given depending on the value of $\frac{n\beta^2}{4\sigma^2}$.
\paragraph{Case 1: $\frac{n\beta^2}{4\sigma^2}\geq 2\log (2e^2n)$.}
In this regime, we have
\begin{eqnarray}
\nonumber \mathbb{E}_r\|\hat{r}-r\|^2 &\leq& \sum_{m=1}^{n^3}m\mathbb{P}_r(\|\hat{r}-r\|^2=m) \\
\nonumber &\leq& \sum_{m=1}^{n^3}m\sum_{\tilde{r}\in\mathcal{R}_m}\mathbb{P}_r\left(L(\tilde{r})\leq L(r)\right) \\
\label{eq:bd-error} &\leq& \sum_{m=1}^{n^3}m|\mathcal{R}_m|\exp\left(-\frac{n\beta^2m}{4\sigma^2}\right) \\
\label{eq:bd-card} &\leq& \sum_{m=1}^nm\left(\frac{2e^2n}{m}\right)^m\exp\left(-\frac{n\beta^2m}{4\sigma^2}\right) + n^n\sum_{m=n+1}^{n^3}m\exp\left(-\frac{n\beta^2m}{4\sigma^2}\right).
\end{eqnarray}
In the above derivation, we use Lemma \ref{lem:dev} for (\ref{eq:bd-error}) and Lemma \ref{lem:card} for (\ref{eq:bd-card}).
Now we will give bounds for the two terms in (\ref{eq:bd-card}) separately. The first term has bound
$$\sum_{m=1}^n(2e^2n)^m\exp\left(-\frac{n\beta^2m}{4\sigma^2}\right)=\sum_{m=1}^n\exp\left(-m\left[\frac{n\beta^2}{4\sigma^2}-\log(2e^2n)\right]\right).$$
Under the condition $\frac{n\beta^2}{4\sigma^2}\geq 2\log (2e^2n)$, we have 
$$\sum_{m=1}^n\exp\left(-m\left[\frac{n\beta^2}{4\sigma^2}-\log(2e^{2}n)\right]\right)\lesssim \exp\left(-\left[\frac{n\beta^2}{4\sigma^2}-\log(2e^{2}n)\right]\right)=n\exp\left(-(1+o(1))\frac{n\beta^2}{4\sigma^2}\right).$$
The second term of (\ref{eq:bd-card}) has bound
\begin{eqnarray*}
n^n\sum_{m=n+1}^{n^3}m\exp\left(-\frac{n\beta^2m}{4\sigma^2}\right) &\leq& n^nn^6\exp\left(-\frac{n^2\beta^2}{4\sigma^2}\right) \\
&\leq& \exp\left(-n\left[\frac{n\beta^2}{4\sigma^2}-7\log(n)\right]\right) \\
&\leq& \exp\left(-\left[\frac{n\beta^2}{4\sigma^2}-7\log(n)\right]\right) \\
&=& n\exp\left(-(1+o(1))\frac{n\beta^2}{4\sigma^2}\right).
\end{eqnarray*}
Therefore, in this regime, we have
$$\mathbb{E}_r\|\hat{r}-r\|^2\lesssim n\exp\left(-(1+o(1))\frac{n\beta^2}{4\sigma^2}\right).$$

\paragraph{Case 2: $2\log(16e)<\frac{n\beta^2}{4\sigma^2}<2\log (2e^{2}n)$.}
In this regime, we have
\begin{eqnarray}
\nonumber \mathbb{E}_r\|\hat{r}-r\|^2 &\leq& m_0 + \sum_{m>m_0}\mathbb{P}_r\left(\|\hat{r}-r\|^2\geq m\right) \\
\nonumber &\leq& m_0 + \sum_{m>m_0}m\mathbb{P}_r\left(\|\hat{r}-r\|^2= m\right) \\
\label{eq:bd-error2} &\leq& m_0 + \sum_{m>m_0}m|\mathcal{R}_m|\exp\left(-\frac{n\beta^2m}{4\sigma^2}\right) \\
\label{eq:bd-card2} &\leq& m_0 + \sum_{m=m_0+1}^nm\left(\frac{2e^{2}n}{m}\right)^m\exp\left(-\frac{n\beta^2m}{4\sigma^2}\right) \\
\nonumber && + \sum_{m=n+1}^{n^2}m\left(\frac{8em}{n}\right)^n\exp\left(-\frac{n\beta^2m}{4\sigma^2}\right) + n^n\sum_{m>n^2}m\exp\left(-\frac{n\beta^2m}{4\sigma^2}\right).
\end{eqnarray}
Again, Lemma \ref{lem:dev} and Lemma \ref{lem:card} are used to derive (\ref{eq:bd-error2}) and (\ref{eq:bd-card2}). There are four terms in (\ref{eq:bd-card2}) that we need to bound.
For the first term, we take
$$m_0=2\exp\left(-\frac{n\beta^2}{4\sigma^2}+2\log(4e^{2}n)\right)\lesssim n\exp\left(-(1+o(1))\frac{n\beta^2}{4\sigma^2}\right).$$
We remark that for this choice of $m_0$, we have $m_0\gtrsim 1$ under the condition $\frac{n\beta^2}{4\sigma^2}<2\log (2e^{2}n)$.
Then, the second term can be bounded by
\begin{eqnarray*}
&& \sum_{m=m_0+1}^nm\left(\frac{2e^{2}n}{m}\right)^m\exp\left(-\frac{n\beta^2m}{4\sigma^2}\right) \\
&\leq& \sum_{m=m_0+1}^n\left(\frac{4e^{2}n}{m}\right)^m\exp\left(-\frac{n\beta^2m}{4\sigma^2}\right) \\
&\leq& \sum_{m\geq m_0+1}\left(\frac{\exp\left(-\frac{n\beta^2}{4\sigma^2}+\log(4e^{2}n)\right)}{m_0}\right)^m \\
&\leq& \sum_{m\geq m_0+1}2^{-m}\lesssim 1\lesssim m_0.
\end{eqnarray*}
The third term can be bounded by
\begin{eqnarray}
\nonumber && \sum_{m=n+1}^{n^2}m\left(\frac{8em}{n}\right)^n\exp\left(-\frac{n\beta^2m}{4\sigma^2}\right) \\
\nonumber &\leq& \sum_{m=n+1}^{n^2}\left(\frac{16em}{n}\right)^n\exp\left(-\frac{n\beta^2m}{4\sigma^2}\right) \\
\label{eq:tbc} &\leq& \sum_{m>n}\exp\left(-\frac{mn\beta^2}{8\sigma^2}\right)\left[\exp\left(-\frac{n\beta^2}{8\sigma^2}+\frac{n}{m}\log\left(\frac{16em}{n}\right)\right)\right]^m.
\end{eqnarray}
Note that under the condition $2\log(16e)<\frac{n\beta^2}{4\sigma^2}$,
$$-\frac{n\beta^2}{8\sigma^2}+\frac{n}{m}\log\left(\frac{16em}{n}\right)\leq -\frac{n\beta^2}{8\sigma^2}+\log(16e)<0,$$
for all $m>n$. Thus, (\ref{eq:tbc}) can be bounded by
$$\sum_{m>n}\exp\left(-\frac{mn\beta^2}{8\sigma^2}\right)\lesssim \exp\left(-\frac{n^2\beta^2}{8\sigma^2}\right)\lesssim 1\lesssim m_0.$$
For the last term, we have the bound
\begin{eqnarray}
\nonumber n^n\sum_{m>n^2}m\exp\left(-\frac{n\beta^2m}{4\sigma^2}\right) &\leq& n^n\sum_{m>n^2}\exp\left(-m\left[\frac{n\beta^2}{4\sigma^2}-1\right]\right) \\
\nonumber &\lesssim& n^n\exp\left(-n^2\left[\frac{n\beta^2}{4\sigma^2}-1\right]\right) \\
\label{eq:3rd-bd} &\lesssim& 1\lesssim m_0.
\end{eqnarray}
Hence, in this regime, we have
$$\mathbb{E}_r\|\hat{r}-r\|^2\lesssim n\exp\left(-(1+o(1))\frac{n\beta^2}{4\sigma^2}\right).$$

\paragraph{Case 3: $\frac{\log n}{n}<\frac{n\beta^2}{4\sigma^2}\leq 2\log(16e)$.}
For the this regime, we use a similar argument that is used in the previous two, and we obtain the following bound
$$\mathbb{E}_r\|\hat{r}-r\|^2\leq m_0 + \sum_{m=m_0+1}^{n^2}m\left(\frac{8em}{n}\right)^n\exp\left(-\frac{n\beta^2m}{4\sigma^2}\right) + n^n\sum_{m=n^2+1}^{n^3}m\exp\left(-\frac{n\beta^2m}{4\sigma^2}\right).$$
There are three terms to bound. For the first term,
we choose
$$m_0=Cn\left(\frac{n\beta^2}{4\sigma^2}\right)^{-1}\log\left[\left(\frac{n\beta^2}{4\sigma^2}\right)^{-1}\right],$$
for some large constant $C>0$. Note that $m_0\geq n$ under the condition $\frac{n\beta^2}{4\sigma^2}\leq 2\log(16e)$. Then, the second term is bounded by
\begin{eqnarray*}
&& \sum_{m=m_0+1}^{n^2}m\left(\frac{8em}{n}\right)^n\exp\left(-\frac{n\beta^2m}{4\sigma^2}\right) \\
&\leq& \sum_{m>m_0}\exp\left(-\frac{mn\beta^2}{8\sigma^2}\right)\left[\exp\left(-\frac{n\beta^2}{8\sigma^2}+\frac{n}{m_0}\log\left(\frac{16em_0}{n}\right)\right)\right]^m.
\end{eqnarray*}
By the choice of $m_0$, $-\frac{n\beta^2}{8\sigma^2}+\frac{n}{m_0}\log\left(\frac{16em_0}{n}\right)\leq 0$ for some sufficiently large $C>0$. This gives

$$\sum_{m>m_0}\exp\left(-\frac{mn\beta^2}{8\sigma^2}\right)\left[\exp\left(-\frac{n\beta^2}{8\sigma^2}+\frac{n}{m_0}\log\left(\frac{16em_0}{n}\right)\right)\right]^m\leq \sum_{m>m_0}\exp\left(-\frac{mn\beta^2}{8\sigma^2}\right)\lesssim m_0.$$
The third term can be bounded in the same way as the bound (\ref{eq:3rd-bd}). Hence, in this regime, we have
$$\mathbb{E}_r\|\hat{r}-r\|^2\lesssim n\left(\frac{n\beta^2}{4\sigma^2}\right)^{-1}\log\left[\left(\frac{n\beta^2}{4\sigma^2}\right)^{-1}\right].$$

\paragraph{Case 4: $\frac{n\beta^2}{4\sigma^2}\leq \frac{\log n}{n}$.}
For this regime, we have the bound
$$\mathbb{E}_r\|\hat{r}-r\|^2\leq m_0 + n^n\sum_{m=m_0+1}^{n^3}m\exp\left(-\frac{n\beta^2m}{4\sigma^2}\right).$$
Choose
$$m_0=Cn\left(\frac{n\beta^2}{4\sigma^2}\right)^{-1}\log n,$$
for some large constant $C>0$. Then, we can bound both terms. This gives
$$\mathbb{E}_r\|\hat{r}-r\|^2\lesssim n\left(\frac{n\beta^2}{4\sigma^2}\right)^{-1}\log\left[\left(\frac{n\beta^2}{4\sigma^2}\right)^{-1}\right],$$
by using the condition $\frac{n\beta^2}{4\sigma^2}\leq \frac{\log n}{n}$.

Finally, we can summarize our results obtained in the four regimes above. We note that when $\frac{n\beta^2}{4\sigma^2}$ is at a constant level, both the bound $n\exp\left(-(1+o(1))\frac{n\beta^2}{4\sigma^2}\right)$ and the bound $n\left(\frac{n\beta^2}{4\sigma^2}\right)^{-1}\log\left[\left(\frac{n\beta^2}{4\sigma^2}\right)^{-1}\right]$ are of order $n$, and thus they can be used interchangeably. With the relation $\ell_2(\hat{r},r)=\|\hat{r}-r\|^2/n$, we have
$$\mathbb{E}_r\ell_2(\hat{r},r)\lesssim \begin{cases}
\exp\left(-(1+o(1))\frac{n\beta^2}{4\sigma^2}\right), & \frac{n\beta^2}{4\sigma^2}>1, \\
\left\{\left(\frac{n\beta^2}{4\sigma^2}\right)^{-1}\log\left[\left(\frac{n\beta^2}{4\sigma^2}\right)^{-1}\right]\right\}\wedge n^2, & \frac{n\beta^2}{4\sigma^2}\leq 1,
\end{cases}$$
where the bound $\mathbb{E}_r\ell_2(\hat{r},r)\lesssim n^2$ is automatically satisfied by the definition of the loss. When $\frac{n\beta^2}{4\sigma^2}>1$, the result for $\ell_q(\hat{r},r)$ is immediately implied by the relation $\ell_q(\hat{r},r)\leq \ell_2(\hat{r},r)$. When $\frac{n\beta^2}{4\sigma^2}\leq 1$, one can use H\"{o}lder's inequality and Jensen's inequality and get
$$\mathbb{E}_r\ell_q(\hat{r},r)\leq \mathbb{E}_r\left[\ell_2(\hat{r},r)^{q/2}\right]\leq \left[\mathbb{E}_r\ell_2(\hat{r},r)\right]^{q/2}.$$
The proof is complete by taking supreme over $r\in\mathcal{R}$.
\end{proof}

Now we will prove Theorem \ref{thm:upper2}. The following lemma that uniformly controls the comparison between objective functions is important. Its proof will be given in Section \ref{sec:pf-aux}.
\begin{lemma}\label{lem:uniform}
Assume (\ref{eq:error}), (\ref{eq:signal}) and (\ref{eq:signal-entropy}).
For any $r\in\mathcal{R}$, and $t>0$ and any $l\geq 1$, we have
$$\mathbb{P}_{r}\left(\min_{\{\tilde{r}\in\mathcal{R}: tl<\|\tilde{r}-r\|\leq t(l+1)\}} L(\tilde{r})\leq L(r)\right)\lesssim \exp\left(-C'\frac{\beta^2t^2l^2}{M\sigma^2}\right),$$
where $C'>0$ is some universal constant, and $M$ is the same constant in (\ref{eq:signal-entropy}).
\end{lemma}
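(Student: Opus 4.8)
The plan is to recognize the event in question as a one-sided deviation of a sub-Gaussian process indexed by the shell $\mathcal{A}=\{\tilde r\in\mathcal{R}: tl<\|\tilde r-r\|\le t(l+1)\}$, and to control its supremum by chaining rather than by a crude union bound. First I would expand the objective. Writing $\Delta_{ij}(\tilde r)=\mu_{\tilde r(i)\tilde r(j)}-\mu_{r(i)r(j)}$, $D^2(\tilde r)=\sum_{i\ne j}\Delta_{ij}(\tilde r)^2$ and $G(\tilde r)=\sum_{i\ne j}Z_{ij}\Delta_{ij}(\tilde r)$, the substitution $X_{ij}=\mu_{r(i)r(j)}+Z_{ij}$ gives $L(\tilde r)-L(r)=D^2(\tilde r)-2G(\tilde r)$, so that $\{L(\tilde r)\le L(r)\}=\{G(\tilde r)\ge \tfrac12 D^2(\tilde r)\}$. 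By the lower signal bound (\ref{eq:signal}), $D^2(\tilde r)\ge 2n\beta^2\|\tilde r-r\|^2>2n\beta^2t^2l^2$ for every $\tilde r\in\mathcal{A}$, so the event whose probability we must bound is contained in $\{\sup_{\tilde r\in\mathcal{A}}G(\tilde r)\ge n\beta^2t^2l^2\}$. Denote this threshold by $T=n\beta^2t^2l^2$.

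Next I would set up $G$ as a sub-Gaussian process. By (\ref{eq:error}), for any $\tilde r_1,\tilde r_2$ the increment $G(\tilde r_1)-G(\tilde r_2)=\sum_{i\ne j}Z_{ij}(\Delta_{ij}(\tilde r_1)-\Delta_{ij}(\tilde r_2))$ is sub-Gaussian with variance proxy $\sigma^2 d(\tilde r_1,\tilde r_2)^2$, where $d(\tilde r_1,\tilde r_2)^2=\sum_{i\ne j}(\mu_{\tilde r_1(i)\tilde r_1(j)}-\mu_{\tilde r_2(i)\tilde r_2(j)})^2$. The upper signal bound (\ref{eq:signal-entropy}), applied to the pair $(\tilde r_1,\tilde r_2)$, yields $d(\tilde r_1,\tilde r_2)\le \sqrt{2Mn}\,\beta\|\tilde r_1-\tilde r_2\|$, and the same bound gives a $d$-diameter of $\mathcal{A}$ of order $\sqrt{Mn}\,\beta tl$. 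Thus the geometry of $G$ is governed, up to the factor $\sqrt{2Mn}\,\beta$, by the Euclidean geometry of the integer shell $\mathcal{A}$; this is exactly the role played by (\ref{eq:signal-entropy}).

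I would then split according to the size of the target exponent $\beta^2t^2l^2/(M\sigma^2)$. If it is below a universal constant the claim is trivial, its right-hand side being bounded below by a constant while any probability is at most one. When it is large, equivalently $t^2l^2\gtrsim M\sigma^2/\beta^2$, there are two sub-cases. If $\frac{n\beta^2}{4\sigma^2}>1$ the single-point bound of Lemma \ref{lem:dev} already dominates the entropy of Lemma \ref{lem:card}, and a direct union bound over $\mathcal{A}$ suffices. The delicate sub-case is $\frac{n\beta^2}{4\sigma^2}\le 1$, where $t^2l^2\gtrsim M\sigma^2/\beta^2$ forces $\|\tilde r-r\|^2\gtrsim n$, so that $\mathcal{A}$ consists of dense integer vectors whose covering numbers obey $\log N(\delta,\mathcal{A},\|\cdot\|)\lesssim n\log(R/\delta)$ with $R\asymp tl$ (the combinatorial counting behind Lemma \ref{lem:card}). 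Here I would fix a reference $\tilde r_0\in\mathcal{A}$, write $\sup_{\tilde r}G(\tilde r)\le G(\tilde r_0)+\sup_{\tilde r}(G(\tilde r)-G(\tilde r_0))$, bound the mean of the second term by Dudley's entropy integral --- which, after transporting $d$-entropy to $\ell_2$-entropy via (\ref{eq:signal-entropy}), is of order $\sigma\sqrt{Mn}\,\beta\cdot\sqrt n\,R\lesssim T$ precisely because $\beta^2R^2\gtrsim M\sigma^2$ --- and then invoke a generic-chaining tail inequality together with the single-point sub-Gaussian estimate for $G(\tilde r_0)$. Using $D^2(\tilde r)\le 2Mn\beta^2t^2(l+1)^2$ as the variance proxy and $l^4/(l+1)^2\ge l^2/4$, both pieces evaluate to $\exp(-C'\beta^2t^2l^2/(M\sigma^2))$, as required.

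The main obstacle is the removal of the logarithmic factor present in Theorem \ref{thm:upper1}. A naive union bound over all of $\mathcal{A}$ via Lemmas \ref{lem:dev} and \ref{lem:card} inflates the exponent by an entropy term of order $\|\tilde r-r\|^2\log(n/\|\tilde r-r\|^2)$, which is exactly the surviving logarithm. The purpose of condition (\ref{eq:signal-entropy}), and of the chaining argument it enables, is that in the delicate regime the shell radius exceeds $\sqrt n$, so the scale-dependent covering numbers make the entropy integral contribute $\sqrt n\,R$ rather than $R^2\sqrt{\log}$; this keeps $\mathbb{E}\sup_{\tilde r}G(\tilde r)\lesssim T$ and deletes the spurious logarithm. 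Verifying that the trivial, union-bound, and chaining regimes patch together continuously --- with the transitions occurring near $\beta^2t^2l^2\asymp M\sigma^2$ and $\frac{n\beta^2}{4\sigma^2}\asymp 1$ --- is the final bookkeeping.
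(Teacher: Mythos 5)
Your proposal is correct and follows essentially the same route as the paper: reduce the event to $\sup_{\tilde r}\langle Z,\mu(r)-\mu(\tilde r)\rangle \geq n\beta^2t^2l^2$ via the lower bound (\ref{eq:signal}), endow the shell with the metric $d(\tilde r_1,\tilde r_2)\asymp\sqrt{Mn}\,\sigma\beta\|\tilde r_1-\tilde r_2\|$ coming from (\ref{eq:error}) and (\ref{eq:signal-entropy}), and chain with volume-ratio covering numbers so that the entropy integral is of order $\sqrt{M}\,n\sigma\beta tl$, yielding the exponent $\beta^2t^2l^2/(M\sigma^2)$. The only cosmetic difference is your three-way case split (trivial/union-bound/chaining), which the paper avoids because the $\psi_2$-norm chaining bound from the volume-ratio entropy estimate is valid uniformly over all regimes of $tl$ and of $\frac{n\beta^2}{4\sigma^2}$.
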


\begin{proof}[Proof of Theorem \ref{thm:upper2}]
In view of Theorem \ref{thm:upper1}, we only need to improve the polynomial rate in the regime $\frac{n\beta^2}{4\sigma^2}\leq 1$. In other words, it suffices to prove $\mathbb{E}\|\hat{r}-r\|^2\lesssim \frac{\sigma^2}{\beta^2}$.

For a given $t>0$, we will derive a bound for $\mathbb{P}_r(\|\hat{r}-r\|>t)$. Define the set
$$\tilde{\mathcal{R}}_l=\left\{\tilde{r}\in\mathcal{R}: tl<\|\tilde{r}-r\|\leq t(l+1)\right\}.$$
Then, we have
$$
\mathbb{P}_r(\|\hat{r}-r\|>t) \leq \sum_{l=1}^{\infty}\mathbb{P}_r\left(\hat{r}\in\tilde{\mathcal{R}}_l\right) \leq \sum_{l=1}^{\infty}\mathbb{P}_r\left(\inf_{\tilde{r}\in\tilde{\mathcal{R}}_l}L(\tilde{r})\leq L(r)\right).
$$
Using Lemma \ref{lem:uniform}, we have
$$\sum_{l=1}^{\infty}\mathbb{P}_r\left(\inf_{\tilde{r}\in\tilde{\mathcal{R}}_l}L(\tilde{r})\leq L(r)\right)\lesssim \sum_{l=1}^{\infty}\exp\left(-C'\frac{\beta^2t^2l^2}{M\sigma^2}\right)\lesssim \exp\left(-C'\frac{\beta^2t^2}{M\sigma^2}\right).$$
Choosing $t^2=\frac{M\sigma^2 \tau}{\beta^2}$, we obtain the bound
$$\mathbb{P}_r\left(\frac{\|\hat{r}-r\|^2}{(M\sigma^2)/\beta^2}>\tau\right)\lesssim \exp\left(-C'\tau\right).$$
Integrate up the tail over $\tau>0$, we get $\mathbb{E}\|\hat{r}-r\|^2\lesssim \frac{\sigma^2}{\beta^2}$. The desired bound for $\mathbb{E}_r\ell_2(\hat{r},r)$ is implied by $\ell_2(\hat{r},r)=\|\hat{r}-r\|^2/n$ and $\ell_2(\hat{r},r)\lesssim n^2$. For the loss $\ell_q(\hat{r},r)$ with $q\in[0,2)$, we use the inequality $\ell_q(\hat{r},r)\leq\ell_2(\hat{r},r)^{q/2}$, and then get
$$\mathbb{P}_r\left(\frac{\ell_q(\hat{r},r)}{\left(\frac{M\sigma^2}{n\beta^2}\right)^{q/2}}>\tau^{q/2}\right)\lesssim \exp\left(-C'\tau\right).$$
The desired bound for $\mathbb{E}_r\ell_q(\hat{r},r)$ is obtained by integrating up the tail and the fact that $\ell_q(\hat{r},r)\lesssim n^q$. The proof is complete by taking supreme over $r\in\mathcal{R}$.
\end{proof}

\subsection{Proof of Theorem \ref{thm:lower}}

We first show the lower bound in the regime of $\frac{n\beta^2}{4\sigma^2}\leq 1$. We need the following Fano's inequality. The version we give here is from \cite{yu1997assouad}.
\begin{proposition} \label{prop:fano}
Let $(\Xi,\ell)$ be a metric space and $\{\mathbb{P}_{\xi}:\xi\in\Xi\}$ be a collection of probability measures. For any totally bounded $T\subset\Xi$, define the Kullback-Leibler diameter by
$$d_{\text{KL}}(T)=\sup_{\xi,\xi'\in T}D(\mathbb{P}_{\xi}||\mathbb{P}_{\xi'}).$$
Then
$$
\label{eq:fanoKL}\inf_{\hat{\xi}}\sup_{\xi\in\Xi}\mathbb{P}_{\xi}\left\{\ell^2\Big(\hat{\xi}(X),\xi\Big)\geq\frac{\epsilon^2}{4}\right\} \geq 1-\frac{d_{\text{KL}}(T)+\log 2}{\log\mathcal{M}(\epsilon,T,\ell)},
$$
for any $\epsilon>0$, where the packing number $\mathcal{M}(\epsilon,T,\ell)$ is the largest number of points in $T$ that are at least $\epsilon$ away from each other with respect to $\ell(\cdot,\cdot)$.
\end{proposition}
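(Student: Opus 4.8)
The plan is to reduce the minimax estimation problem to a multiple-hypothesis testing problem over a maximal $\epsilon$-packing of $T$, and then invoke the information-theoretic form of Fano's inequality. First I would set $M=\mathcal{M}(\epsilon,T,\ell)$ and fix a maximal packing $\{\xi_1,\dots,\xi_M\}\subset T$, so that $\ell(\xi_i,\xi_j)\geq\epsilon$ for all $i\neq j$. Since the supremum over $\Xi$ dominates the uniform average over this finite subset, it suffices to lower bound $\frac{1}{M}\sum_{i=1}^M\mathbb{P}_{\xi_i}\{\ell^2(\hat{\xi},\xi_i)\geq\epsilon^2/4\}$ for an arbitrary estimator $\hat{\xi}$.

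Next I would convert the estimator into a test. Given $\hat{\xi}$, define $\psi=\argmin_{j}\ell(\hat{\xi},\xi_j)$, the index of the closest packing point. The key geometric observation is that if $\ell(\hat{\xi},\xi_i)<\epsilon/2$ (equivalently, the loss is below $\epsilon^2/4$), then for every $j\neq i$ the triangle inequality together with the separation $\ell(\xi_i,\xi_j)\geq\epsilon$ gives $\ell(\hat{\xi},\xi_j)\geq\ell(\xi_i,\xi_j)-\ell(\hat{\xi},\xi_i)>\epsilon/2>\ell(\hat{\xi},\xi_i)$, so $\psi=i$. Contrapositively $\{\psi\neq i\}\subseteq\{\ell^2(\hat{\xi},\xi_i)\geq\epsilon^2/4\}$, and hence the average loss probability is bounded below by the average testing error $\frac{1}{M}\sum_i\mathbb{P}_{\xi_i}(\psi\neq i)$.

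The final step controls this testing error. Placing a uniform prior $V\sim\text{Unif}\{1,\dots,M\}$ and drawing $X\sim\mathbb{P}_{\xi_V}$, the classical Fano inequality yields $\frac{1}{M}\sum_i\mathbb{P}_{\xi_i}(\psi\neq i)\geq 1-\frac{I(V;X)+\log 2}{\log M}$. It then remains to bound the mutual information: writing $\bar{\mathbb{P}}=\frac{1}{M}\sum_j\mathbb{P}_{\xi_j}$ and using the identity $I(V;X)=\frac{1}{M}\sum_i D(\mathbb{P}_{\xi_i}\|\bar{\mathbb{P}})$ together with convexity of the KL divergence in its second argument, I would estimate $D(\mathbb{P}_{\xi_i}\|\bar{\mathbb{P}})\leq\frac{1}{M}\sum_j D(\mathbb{P}_{\xi_i}\|\mathbb{P}_{\xi_j})\leq d_{\text{KL}}(T)$, so that $I(V;X)\leq d_{\text{KL}}(T)$. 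Substituting $M=\mathcal{M}(\epsilon,T,\ell)$ produces the stated bound. The only genuinely delicate point is the reduction step, where one must verify that the triangle-inequality argument makes $\psi$ a well-defined test whose error lower bounds the loss probability uniformly; the information-theoretic ingredients (Fano plus the convexity estimate on $I(V;X)$) are standard.
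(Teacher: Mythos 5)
Your proposal is correct, and it is the standard proof of this result: reduction of the minimax probability to an $M$-ary testing problem over a maximal $\epsilon$-packing via the triangle inequality, followed by Fano's inequality with the mutual information bounded by the Kullback--Leibler diameter through convexity of $D(\cdot\|\cdot)$ in its second argument. The paper does not prove this proposition itself but cites it from Yu (1997), where essentially this same argument appears, so there is nothing to reconcile beyond the routine points you already flag (measurable tie-breaking in the $\argmin$ defining $\psi$).
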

We also need the following Varshamov-Gilbert bound. The version we present here is due to \cite[Lemma 4.7]{massart2007}.
\begin{lemma}\label{lem:VG}
There exists a subset $\{\omega_1,...,\omega_N\}\subset \{0,1\}^d$ such that
\begin{equation}
||\omega_i-\omega_j||^2\geq\frac{d}{4},\quad\text{for any }i\neq j\in[N],\label{eq:defH}
\end{equation}
for some $N\geq \exp {(d/8)}$. 
\end{lemma}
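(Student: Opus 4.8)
The plan is to prove the statement by the probabilistic method, i.e.\ the classical random-coding argument behind the Varshamov--Gilbert bound. First I would observe that for binary vectors $\omega,\omega'\in\{0,1\}^d$ the squared Euclidean distance coincides with the Hamming distance, $\|\omega-\omega'\|^2=\sum_{k=1}^d\mathbb{I}\{\omega_k\neq\omega'_k\}$, so that (\ref{eq:defH}) is exactly a lower bound on pairwise Hamming distances. I would then draw $\omega_1,\dots,\omega_M$ independently and uniformly from $\{0,1\}^d$. For a fixed pair $i\neq j$, the coordinates of $\omega_i$ and $\omega_j$ disagree independently with probability $1/2$, so $\|\omega_i-\omega_j\|^2\sim\mathrm{Binomial}(d,1/2)$ with mean $d/2$, and Hoeffding's inequality applied to the lower tail gives
$$\mathbb{P}\left(\|\omega_i-\omega_j\|^2<\frac{d}{4}\right)\leq\exp\left(-\frac{2(d/4)^2}{d}\right)=\exp\left(-\frac{d}{8}\right).$$
This single estimate is the crux of the argument, and it is precisely where the exponent $d/8$ in the claim originates.

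Next I would pass from ``a typical pair is well separated'' to the existence of an actual large separated set via the deletion (alteration) step. Let $B$ denote the number of \emph{bad} pairs, those with $\|\omega_i-\omega_j\|^2<d/4$. By linearity of expectation and the display above, $\mathbb{E}B\leq\binom{M}{2}\exp(-d/8)$, so some realization satisfies $B\leq\binom{M}{2}\exp(-d/8)$. Deleting one endpoint from each bad pair in that realization leaves a subset in which all pairwise squared distances are at least $d/4$, of cardinality at least $M-\binom{M}{2}\exp(-d/8)$. Taking $M=\exp(d/8)$ makes $\binom{M}{2}\exp(-d/8)\leq M/2$, so at least $M/2$ points survive, producing a family of the order claimed.

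The main obstacle is purely quantitative: the deletion step loses a constant factor, so this route delivers $N\geq\frac{1}{2}\exp(d/8)$ rather than the clean $N\geq\exp(d/8)$. To recover the stated constant exactly I would instead use the greedy Gilbert--Varshamov construction, building the code one codeword at a time, each new point forbidding the Hamming ball of radius $<d/4$ around it. Since such a ball has cardinality $\sum_{k<d/4}\binom{d}{k}\leq 2^{dH(1/4)}$, where $H$ is the binary entropy, the process cannot terminate before selecting at least $2^d/2^{dH(1/4)}=2^{d(1-H(1/4))}$ points; a numerical check that $2^{d(1-H(1/4))}\geq\exp(d/8)$ then yields (\ref{eq:defH}) with the asserted constant. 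Both routes are routine, and for the applications in the lower-bound proofs only the order $\log N\gtrsim d$ is needed, so the factor-of-two discrepancy is immaterial; the only point requiring care is the entropy/volume estimate in the greedy variant.
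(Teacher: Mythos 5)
Your argument is correct, but note that the paper does not prove this lemma at all: it is quoted verbatim from \cite[Lemma 4.7]{massart2007}, so there is no in-paper proof to match against, and a self-contained argument like yours is a reasonable thing to supply. Both of your routes work. The one quantitative issue, which you already flag honestly, is that the random-selection-plus-deletion route only yields $N\geq\frac{1}{2}\exp(d/8)$, which does not literally give the stated constant; for the lower-bound applications in Theorem \ref{thm:lower} only $\log N\gtrsim d$ matters, but if you want the lemma exactly as stated you do need the second (greedy/maximality) route. There you can streamline the argument and avoid the binary-entropy numerics entirely by recycling your own Hoeffding estimate: a maximal $d/4$-separated set has the property that the sets $\{x:\|x-\omega_i\|^2<d/4\}$ cover $\{0,1\}^d$, and the normalized cardinality of each such set is $2^{-d}\sum_{k< d/4}\binom{d}{k}=\mathbb{P}\left(\mathrm{Binomial}(d,1/2)< d/4\right)\leq\exp(-d/8)$ --- the identical bound you used for a random pair --- whence $N\geq 2^d/\left(2^d e^{-d/8}\right)=\exp(d/8)$ with no slack. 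This is essentially the proof in Massart's book, so your second route, suitably simplified, recovers the cited result exactly.
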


\begin{proof}[Proof of Theorem \ref{thm:lower} when $\frac{n\beta^2}{4\sigma^2}\leq 1$]
We consider the distribution $X_{ij}\sim N(\tilde{\beta}(r(i)-r(j)),\sigma^2)$, which clearly satisfies (\ref{eq:error}). Then, for any two different $r,\tilde{r}\in\mathcal{R}$,
\begin{eqnarray*}
&& \sum_{1\leq i\neq j\leq n}\left(\tilde{\beta}(r(i)-r(j))-\tilde{\beta}(\tilde{r}(i)-\tilde{r}(j))\right)^2 \\
&=& \tilde{\beta}^2\sum_{1\leq i\neq j\leq n}\left((r(i)-\tilde{r}(i))^2+(r(j)-\tilde{r}(j))^2-2(r(i)-\tilde{r}(i))(r(j)-\tilde{r}(j))\right) \\
&=& 2(n-1)\tilde{\beta}^2\|r-\tilde{r}\|^2 + 2\tilde{\beta}^2\|r-\tilde{r}\|^2 -2\tilde{\beta}^2\left(\sum_{i=1}^n(r(i)-\tilde{r}(i))\right)^2 \\
&=& 2n\tilde{\beta}^2\|r-\tilde{r}\|^2 -2\tilde{\beta}^2\left(\sum_{i=1}^n(r(i)-\tilde{r}(i))\right)^2.
\end{eqnarray*}
Note that $\left(\sum_{i=1}^n(r(i)-\tilde{r}(i))\right)^2=O(c_n^2)=o(n)$, and therefore
$$\sum_{1\leq i\neq j\leq n}\left(\tilde{\beta}(r(i)-r(j))-\tilde{\beta}(\tilde{r}(i)-\tilde{r}(j))\right)^2=2n(1+o(1))\tilde{\beta}^2\|r-\tilde{r}\|^2.$$
The conditions (\ref{eq:signal}) and (\ref{eq:signal-entropy}) are satisfied with $\beta=(1+o(1))\tilde{\beta}$.
We first translate the Fano's inequality in Proposition \ref{prop:fano} into the following version,
$$\inf_{\hat{r}}\sup_{r\in\mathcal{R}}\mathbb{P}_r\left(\|\hat{r}-r\|_q\geq \frac{m^{1/q}}{2}\right)\geq 1-\frac{d_{\text{KL}}(T)+\log 2}{\log \mathcal{M}(m^{1/q},T,\|\cdot\|_q)},$$
and we need to construct a subset $T\subset\mathcal{R}$. For simplicity of notation, we assume that $n/6$ is an integer, and otherwise simple modification of the proof can be made. Consider a vector $t\in[n]^n$. We set $t(i)=0$ if $1\leq i<n/3$ or $2n/3<i\leq n$. For the entries between $n/3$ and $2n/3$, each $t(i)$ takes value from $\{\ceil{(m/n)^{1/q}}, 2\ceil{(m/n)^{1/q}}\}$ for $n/3\leq i\leq n/2$, and $t(i)=-t(n-i)$ for all $n/2<i\leq 2n/3$. For any such $t$, we define $r_t\in[n]^n$ by $r_t(i)=i+t(i)$. Since $\sum_{i=1}^n r_t(i) = \sum_{i=1}^n i + \sum_{i=1}^it(i)=\sum_{i=1}^n i$, we have $r_t\in\mathcal{R}$ as long as $m\leq n^{q+1}/32$. Moreover, for any two $t,t'$, we have
$$\sum_{i=1}^n|r_t(i)-r_{t'}(i)|^q=2\sum_{i=n/3}^{n/2}|t(i)-t'(i)|^q.$$
Therefore, by Lemma \ref{lem:VG}, there exists a set $T\subset\mathcal{R}$ that collects those $r_t$'s, such that for any two different $r_t,r_{t'}\in T$, $n\ell_q(r_t,r_{t'})\geq c_1m$ and $|T|\geq e^{c_2n}$. We bound the Kullback-Leiber diameter by
\begin{eqnarray*}
d_{\text{KL}}(T) &=& \sup_{r_t,r_{t'}\in T}\sum_{1\leq i\neq j\leq n}\left(\mu_{r_t(i)r_t(j)}-\mu_{r_{t'}(i)r_{t'}(j)}\right)^2/(2\sigma^2) \\
&\leq& \sup_{r_t,r_{t'}\in T}\frac{n\tilde{\beta}^2\|r_t-r_{t'}\|^2}{\sigma^2} \leq C\frac{n^{2-2/q}\tilde{\beta}^2m^{2/q}}{\sigma^2}.
\end{eqnarray*}
The Fano's inequality then implies
$$\inf_{\hat{r}}\sup_{r\in\mathcal{R}}\mathbb{P}_r\left(n\ell_q(\hat{r},r)\geq\frac{c_1m}{2^q}\right)\geq 1-\frac{C\frac{n^{2-2/q}\tilde{\beta}^2m^{2/q}}{\sigma^2}+\log 2}{c_2n}.$$
We take $m=c\left(n\left(\frac{\sigma^2}{n\beta^2}\right)^{q/2}\wedge n^{q+1}\right)$ for some sufficiently small constant $c>0$, and then we get the desired lower bound for $\inf_{\hat{r}}\sup_{r\in\mathcal{R}}\mathbb{E}_r\ell_2(\hat{r},r)$ by applying a Markov inequality.
\end{proof}

\begin{proof}[Proof of Theorem \ref{thm:lower} when $\frac{n\beta^2}{4\sigma^2}> 1$]
We consider the distribution $X_{ij}\sim N(\tilde{\beta}(r(i)-r(j)),\sigma^2)$ as in the previous part of the proof. The $\tilde{\beta}$ is chosen as $(1+o(1))\beta$ so that (\ref{eq:error}), (\ref{eq:signal}) and (\ref{eq:signal-entropy}) are satisfied.
Since
$$\inf_{\hat{r}}\sup_{r\in\mathcal{R}}\mathbb{E}_r\ell_q(\hat{r},r)\geq \inf_{\hat{r}}\sup_{r\in\mathcal{R}}\mathbb{E}_r\ell_0(\hat{r},r),$$
we only need to prove the lower bound for $\inf_{\hat{r}}\sup_{r\in\mathcal{R}}\mathbb{E}_r\ell_0(\hat{r},r)$. Define
$$\tilde{\mathcal{R}}=\left\{r\in\mathcal{R}:\left|\sum_{i=1}^nr(i)-\sum_{i=1}^n i\right|\leq 1\right\}.$$
Then, we have
\begin{equation}
\inf_{\hat{r}}\sup_{r\in\mathcal{R}}\mathbb{E}_rn\ell_0(\hat{r},r)\geq \inf_{\hat{r}}\sup_{r\in\tilde{\mathcal{R}}}\mathbb{E}_rn\ell_0(\hat{r},r)\geq \inf_{\hat{r}}\sum_{i=1}^n\frac{1}{|\tilde{\mathcal{R}}|}\sum_{r\in\tilde{\mathcal{R}}}\mathbb{P}_r\left(\hat{r}(i)\neq r(i)\right).\label{eq:B-r}
\end{equation}
For an $i\in\{2,3,...,n-1\}$, any $r\in\tilde{\mathcal{R}}$ can be written as $r=(r_i,r_{-i})$ with some slight abuse of notation, where we use $r_i$ to denote the $i$th entry of $r$ and $r_{-i}$ to denote the remaining entries. Then, the set $\tilde{R}$ has the following decomposition
$$\tilde{\mathcal{R}}=\cup_{r_{-i}}\mathcal{R}_{r_{-i}},$$
where all the elements in $\mathcal{R}_{r_{-i}}$ have the same entries except for the $i$th one. It is easy to see that 
$$|\tilde{\mathcal{R}}|=\sum_{r_{-i}}|\mathcal{R}_{r_{-i}}|.$$
According to the definition of $\tilde{\mathcal{R}}$, for any $r\in\tilde{\mathcal{R}}$, the sum $\sum_{i=1}^nr(i)$ only takes three possible values in $\left\{\left(\sum_{i=1}^n i\right)-1,\sum_{i=1}^n i,\left(\sum_{i=1}^n i\right)+1\right\}$. Therefore, for each possible $r_{-i}$ with $i\in\{2,3,...,n-1\}$, we have $|\mathcal{R}_{r_{-i}}|=3$. We then take a subset $\tilde{\mathcal{R}}_{r_{-i}}\subset\mathcal{R}_{r_{-i}}$ with $|\tilde{\mathcal{R}}_{r_{-i}}|=2$ so that the two elements in $\tilde{\mathcal{R}}_{r_{-i}}$ satisfy $\|r-r'\|^2=1$. We continue to lower bound (\ref{eq:B-r}) by
\begin{eqnarray*}
&& \inf_{\hat{r}}\sum_{i=2}^{n-1}\frac{1}{|\tilde{\mathcal{R}}|}\sum_{r\in\tilde{\mathcal{R}}}\mathbb{P}_r\left(\hat{r}(i)\neq r(i)\right) \\
&\geq& \inf_{\hat{r}}\sum_{i=2}^{n-1}\frac{1}{|\mathcal{R}|}\sum_{r_{-i}}|\mathcal{R}_{r_{-i}}|\frac{1}{|\mathcal{R}_{r_{-i}}|}\sum_{r\in\mathcal{R}_{r_{-i}}}\mathbb{P}_r(\hat{r}(i)\neq r(i)) \\
&\geq& \inf_{\hat{r}}\frac{1}{3}\sum_{i=2}^{n-1}\frac{1}{|\mathcal{R}|}\sum_{r_{-i}}|\mathcal{R}_{r_{-i}}|\sum_{r\in\tilde{\mathcal{R}}_{r_{-i}}}\mathbb{P}_r(\hat{r}(i)\neq r(i)) \\
&\geq& \frac{1}{3}\sum_{i=2}^{n-1}\frac{1}{|\mathcal{R}|}\sum_{r_{-i}}|\mathcal{R}_{r_{-i}}|\inf_{\hat{r}(i)}\sum_{r\in\tilde{\mathcal{R}}_{r_{-i}}}\mathbb{P}_r(\hat{r}(i)\neq r(i)) \\
&=& \frac{1}{3}\sum_{i=2}^{n-1}\frac{1}{|\mathcal{R}|}\sum_{r_{-i}}|\mathcal{R}_{r_{-i}}|\inf_{\hat{r}}\sum_{r\in\tilde{\mathcal{R}}_{r_{-i}}}\mathbb{P}_r(\hat{r}\neq r).
\end{eqnarray*}
Now it is sufficient to lower bound each $\inf_{\hat{r}}\sum_{r\in\tilde{\mathcal{R}}_{r_{-i}}}\mathbb{P}_r(\hat{r}\neq r)$. We denote the two elements in $\tilde{\mathcal{R}}_{r_{-i}}$ by $r$ and $\tilde{r}$. Then, by Neyman-Pearson lemma, we have
$$\inf_{\hat{r}}\sum_{r\in\tilde{\mathcal{R}}_{r_{-i}}}\mathbb{P}_r(\hat{r}\neq r)=\mathbb{P}_r(L(\tilde{r})\leq L(r))+\mathbb{P}_{\tilde{r}}(L(r)\leq L(\tilde{r}))=2\mathbb{P}\left(N(0,1)>\frac{\sqrt{n-1}\tilde{\beta}}{\sqrt{2}\sigma}\right),$$
where the last identity above is using the fact that $\|r-\tilde{r}\|^2=1$. By a standard normal tail bound argument, we have
$$\mathbb{P}\left(N(0,1)>\frac{\sqrt{n-1}\tilde{\beta}}{\sqrt{2}\sigma}\right)\gtrsim \exp\left(-\frac{(n-1)\tilde{\beta}^2}{4\sigma^2}\right)=\exp\left(-(1+o(1))\frac{n\beta^2}{4\sigma^2}\right).$$
Therefore,
\begin{eqnarray*}
\inf_{\hat{r}}\sup_{r\in\mathcal{R}}\mathbb{E}_r\ell_0(\hat{r},r) &\geq& \frac{1}{3n}\sum_{i=2}^{n-1}\frac{1}{|\mathcal{R}|}\sum_{r_{-i}}|\mathcal{R}_{r_{-i}}|\inf_{\hat{r}}\sum_{r\in\tilde{\mathcal{R}}_{r_{-i}}}\mathbb{P}_r(\hat{r}\neq r) \\
&\gtrsim& \exp\left(-(1+o(1))\frac{n\beta^2}{4\sigma^2}\right),
\end{eqnarray*}
and the proof is complete.
\end{proof}

\subsection{Proofs of Theorem \ref{thm:exact} and Theorem \ref{thm:exact-lower}}

The proof of Theorem \ref{thm:exact} is a simple application of Markov inequality.
\begin{proof}[Proof of Theorem \ref{thm:exact}]
Under the assumption, there exists a small positive constant $\delta>0$, such that
$$\frac{n\beta^2}{4\sigma^2}\geq (1+\delta)\log n,$$
for any sufficiently large $n$. Then, we have
\begin{eqnarray*}
\mathbb{P}_r\left(\hat{r}\neq r\right) &=& \mathbb{P}_r\left(\ell_0(\hat{r},r)\geq \frac{1}{n}\right) \\
&\leq& n\mathbb{E}_r\ell_0(\hat{r},r) \\
&\leq& n\exp\left(-(1+o(1))\frac{n\beta^2}{4\sigma^2}\right) \\
&\leq& n^{-(1+o(1))\delta}.
\end{eqnarray*}
The proof is complete by letting $n$ tend to infinity.
\end{proof}

To prove Theorem \ref{thm:exact-lower}, we needs the following bound for the maximum of dependent Gaussian random variables, which is a result in \cite{hartigan2013bounding}.
\begin{lemma}\label{lem:hartigan}
Consider zero-mean Gaussian random variables $W_1,...,W_n$ with covariance matrix of minimum eigenvalue $\lambda$ and maximum eigenvalue $\Lambda$. Then, for $n\geq 70$,
$$\mathbb{P}\left(\max_{1\leq i\leq n}W_i\geq \lambda\left(2\log n-2.5-\log(2\log n-2.5)\right)^{1/2}-0.68\Lambda\right)\geq \frac{1}{2}.$$
\end{lemma}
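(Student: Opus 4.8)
The plan is to compare $\max_i W_i$ with the maximum of an independent Gaussian sample extracted along the minimum-eigenvalue direction, to bound that maximum's expectation sharply, and then to convert the expectation bound into a probability-$\tfrac12$ statement via Gaussian concentration governed by the maximum eigenvalue. Throughout, scale-invariance of the target inequality forces $\lambda$ and $\Lambda$ to enter at standard-deviation scale, so I read them as the smallest and largest singular values of $\Sigma^{1/2}$, where $\Sigma$ is the covariance of $W$; equivalently $\Sigma\succeq\lambda^2 I$ and $\Sigma\preceq\Lambda^2 I$.

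First I would extract an i.i.d.\ component. Since $\Sigma-\lambda^2 I\succeq 0$, I can write $W\stackrel{d}{=}\lambda Z+Y$, where $Z\sim N(0,I)$ has i.i.d.\ standard normal entries and $Y$ is an independent centered Gaussian vector with covariance $\Sigma-\lambda^2 I$. For each fixed realization of $Z$ the map $y\mapsto\max_i(\lambda Z_i+y_i)$ is convex, so Jensen's inequality together with $\mathbb{E}Y=0$ gives $\mathbb{E}\max_i W_i\geq\mathbb{E}\max_i(\lambda Z_i)=\lambda\,\mathbb{E}\max_i Z_i$. The same reduction follows from Sudakov--Fernique, since $(e_i-e_j)^{\top}\Sigma(e_i-e_j)\geq 2\lambda^2$ shows the increments of $W$ dominate those of the i.i.d.\ process $\lambda Z$.

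Second, I would insert the non-asymptotic lower bound $\mathbb{E}\max_i Z_i\geq\bigl(2\log n-2.5-\log(2\log n-2.5)\bigr)^{1/2}$, valid for $n\geq 70$, which supplies the main term. This comes from $\mathbb{E}\max_i Z_i=\int_0^\infty\bigl(1-\Phi(t)^n\bigr)\,dt-\int_{-\infty}^0\Phi(t)^n\,dt$ by bounding $1-\Phi(t)^n=1-(1-\bar\Phi(t))^n$ from below up to the threshold $t^{\ast}$ where $n\bar\Phi(t^{\ast})\approx 1$, locating $t^{\ast}$ through the tail estimate $\bar\Phi(t)\geq\frac{t}{1+t^2}\phi(t)$; the expansion of $t^{\ast}$ produces the $2\log n$ term with the stated $2.5$ and $\log(2\log n-2.5)$ corrections. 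Third, I would pass to the median: writing $\max_i W_i=g(Z')$ with $Z'=\Sigma^{-1/2}W\sim N(0,I)$ and $g(z)=\max_i(\Sigma^{1/2}z)_i$, the map $g$ is Lipschitz with constant $\max_i\sqrt{\Sigma_{ii}}\leq\Lambda$, so Gaussian concentration gives a median $m$ with $m\geq\mathbb{E}\max_i W_i-0.68\,\Lambda$, the constant $0.68$ coming from a sharp mean--median comparison for Lipschitz functions of Gaussians. Since $\mathbb{P}(\max_i W_i\geq m)\geq\tfrac12$ and $m\geq\lambda(2\log n-2.5-\log(2\log n-2.5))^{1/2}-0.68\,\Lambda$, chaining the three bounds yields the claim.

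The architecture---reduction to the i.i.d.\ case for the mean, and concentration for the mean-to-median step---is standard; the real work lies in the constants. Pinning the closed-form lower bound on $\mathbb{E}\max_i Z_i$ so that it holds for every $n\geq 70$ rather than merely asymptotically, and securing the sharp value $0.68$ in the mean--median gap, both demand careful explicit Gaussian tail estimates; balancing these against the $-0.68\,\Lambda$ loss to keep the final threshold exactly as stated is the delicate part.
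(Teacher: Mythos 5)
You could not have known this, but the paper does not prove this lemma at all: it imports it verbatim from Hartigan (2013) \citep{hartigan2013bounding}, so there is no internal proof to compare against and your reconstruction must be judged on its own merits. Its skeleton is sound. Your reading of $\lambda,\Lambda$ at standard-deviation scale is the right repair of the (as printed, non-scale-invariant) statement, and it is harmless in the paper's one application, where $\lambda=\frac{n-2}{n-1}\leq 1$ and $\Lambda=2\geq\sqrt{2}$, so the threshold only becomes smaller. Step 1 is correct both ways you present it: since $\Sigma\succeq\lambda^2 I$, the decomposition $W\stackrel{d}{=}\lambda Z+Y$ with independent $Y\sim N(0,\Sigma-\lambda^2 I)$ plus conditional Jensen yields $\mathbb{E}\max_i W_i\geq\lambda\,\mathbb{E}\max_i Z_i$, as does Sudakov--Fernique from the increment bound $(e_i-e_j)^{\top}\Sigma(e_i-e_j)\geq 2\lambda^2$. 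Step 3 is also fine, indeed with room to spare: your Lipschitz bound $\max_i\sqrt{\Sigma_{ii}}\leq\Lambda$ is correct, and Borell's isoperimetric inequality already gives $\mathbb{E}f-Mf\leq\int_0^\infty\bar{\Phi}(t/L)\,dt=L/\sqrt{2\pi}\approx 0.399\,L$ for the median $Mf$ of an $L$-Lipschitz $f$ (here $\bar{\Phi}=1-\Phi$ is the standard normal upper tail), so any constant up to $0.68$ works and no ``sharp'' mean--median comparison is needed. The final chaining --- median of $\max_i W_i$ at least $\lambda m_n-0.68\Lambda$, hence the probability-$\frac{1}{2}$ bound --- is valid.

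The genuine gap is step 2, which carries the entire quantitative content of the lemma: the constant $2.5$, the correction $\log(2\log n-2.5)$, and the cutoff $n\geq 70$ all live in the claim $\mathbb{E}\max_i Z_i\geq\left(2\log n-2.5-\log(2\log n-2.5)\right)^{1/2}$, and you assert it with a sketch that, as written, falls short. Writing $t^{\ast}$ for the stated threshold, the tail estimate gives $n\bar{\Phi}(t^{\ast})\rightarrow e^{5/4}/\sqrt{2\pi}\approx 1.39=:c$, so the crude bound $\int_0^{t^{\ast}}\left(1-\Phi(t)^n\right)dt\geq t^{\ast}\left(1-\Phi(t^{\ast})^n\right)\approx\left(1-e^{-c}\right)t^{\ast}\approx 0.75\,t^{\ast}$ does not reach $t^{\ast}$. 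To actually prove the claim one must exploit that $1-\Phi(t)^n\geq 1-e^{-n\bar{\Phi}(t)}$ stays within a boundary layer of width $O(1/t^{\ast})$ of the value $1$ below $t^{\ast}$, and add the matching contribution above $t^{\ast}$: after the change of variables $s=n\bar{\Phi}(t)$ the net correction is $\frac{1}{t^{\ast}}\left(\int_0^{c}\frac{1-e^{-s}}{s}\,ds-\int_{c}^{\infty}\frac{e^{-s}}{s}\,ds\right)\approx\frac{0.9}{t^{\ast}}>0$ (the integral over $(-\infty,0]$ being exponentially negligible), which is why the stated bound is in fact true --- but making these estimates explicit and uniform over all $n\geq 70$ is precisely the content of Hartigan's lemma, and your proposal defers exactly that. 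In short: correct architecture, two of three steps complete, but the decisive numerical estimate is cited rather than derived.
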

\begin{proof}[Proof of Theorem \ref{thm:exact-lower}]
We again consider the distribution $X_{ij}\sim N(\tilde{\beta}(r(i)-r(j)),\sigma^2)$ as in the proof of Theorem \ref{thm:lower}. The $\tilde{\beta}$ is chosen as $(1+o(1))\beta$ so that (\ref{eq:error}), (\ref{eq:signal}) and (\ref{eq:signal-entropy}) are satisfied. In the proof, we assume that $n$ is an even number, otherwise a simple modification of the argument can be done. Note that
$$\inf_{\hat{r}}\sup_{r\in\mathcal{R}}\mathbb{P}_r(\hat{r}\neq r)\geq \inf_{\hat{r}}\frac{1}{|\mathcal{R}|}\sum_{r\in\mathcal{R}}\mathbb{P}_r(\hat{r}\neq r)=\frac{1}{|\mathcal{R}|}\sum_{r\in\mathcal{R}}\mathbb{P}_r(\hat{r}\neq r),$$
where $\hat{r}=\argmin_{r\in\mathcal{R}}L(r)$ is the MLE. Therefore, it suffices to give a lower bound for each $\mathbb{P}_r(\hat{r}\neq r)$. Without loss of generality, we consider $r$ with $r(i)=i$. Then, for each $j\in[n]$, we choose an $r_j\in\mathcal{R}$ that differs with $r$ only at the $j$th position and $\|r_j-r\|^2=1$. We have the lower bound
$$\mathbb{P}_r(\hat{r}\neq r)\geq \mathbb{P}_r\left(\min_{j\in[n]}L(r_j)<L(r)\right).$$
After some rearrangement, we have
$$\mathbb{P}_r\left(\min_{j\in[n]}L(r_j)<L(r)\right)=\mathbb{P}_r\left(\min_{j\in[n]}\sum_{i\in[n]\backslash\{j\}}(Z_{ij}-Z_{ji})\leq -(n-1)\beta/\sigma\right),$$
where $Z_{ij}=X_{ij}-\tilde{\beta}(i-j)\sim N(0,\sigma^2)$. The above probability equals
$$\mathbb{P}\left(\max_{j\in[n]}W_j>\frac{\sqrt{n-1}\tilde{\beta}}{\sqrt{2}\sigma}\right),$$
where $W_j\sim N(0,1)$ and $\mathbb{E}W_iW_j=(n-1)^{-1}$ for all $i\neq j$. Using a similar argument, we can show the same lower bound for $\mathbb{P}_r(\hat{r}\neq r)$ with any other $r\in\mathcal{R}$. Therefore,
$$\frac{1}{|\mathcal{R}|}\sum_{r\in\mathcal{R}}\mathbb{P}_r(\hat{r}\neq r)\geq \mathbb{P}\left(\max_{j\in[n]}W_j>\frac{\sqrt{n-1}\tilde{\beta}}{\sqrt{2}\sigma}\right).$$
Under the assumption that $\limsup_n\frac{n\beta^2}{4\sigma^2\log n}<1$, there exists a small $\delta>0$, such that $\frac{n\beta^2}{4\sigma^2}<(1-\delta)\log n$ for any sufficiently large $n$. To use Lemma \ref{lem:hartigan}, it is easy to check that the smallest and the largest eigenvalues of the covariance matrix of $W_1,...,W_n$ are $\frac{n-2}{n-1}$ and $2$, respectively. Therefore, by Lemma \ref{lem:hartigan}, we have
$$\mathbb{P}\left(\max_{j\in[n]}W_j>\frac{\sqrt{n-1}\tilde{\beta}}{\sqrt{2}\sigma}\right)\geq \frac{1}{2},$$
for any sufficiently large $n$. Hence, the proof is complete.
\end{proof}

\subsection{Proofs of Theorem \ref{thm:upper-comp}, Theorem \ref{thm:upper-add} and Theorem \ref{thm:PL}}

Before the proof of each theorem, we state one or two auxiliary lemmas. The proofs of these lemmas will be given in Section \ref{sec:pf-aux}.
\begin{lemma}\label{lem:dev-comp}
Consider the differential comparison model that satisfies (\ref{eq:error}), (\ref{eq:signal-comp}) and (\ref{eq:stable-comp}).
For any $m$ such that $\mathcal{R}_m\neq\varnothing$, we have
$$\max_{\tilde{r}\in\mathcal{R}_m}\mathbb{P}_r\left(\sum_{i=1}^n(S_i-\theta_{\tilde{r}(i)})^2\leq \sum_{i=1}^n(S_i-\theta_{r(i)})^2\right)\leq\exp\left(-(1+o(1))\frac{n\beta^2m}{4\sigma^2}\right),$$
where $S_i$ and $\mathcal{R}_m$ are defined in (\ref{eq:score-comp}) and (\ref{eq:def-rm}).
\end{lemma}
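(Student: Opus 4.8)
The plan is to reduce the comparison event to a one-sided tail bound on a single sub-Gaussian linear combination of the noise variables, and then invoke condition (\ref{eq:error}) directly. First I would compute the mean and noise parts of the score. Since $X_{ij}-X_{ji}=2(\theta_{r(i)}-\theta_{r(j)})+(Z_{ij}-Z_{ji})$, a short calculation gives
$$\mathbb{E}_r S_i=\theta_{r(i)}+b,\qquad b:=\frac{1}{n}\Big(\sum_{j=1}^n\theta_j-\sum_{j=1}^n\theta_{r(j)}\Big),\qquad S_i-\mathbb{E}_r S_i=\frac{1}{2n}\sum_{j\neq i}(Z_{ij}-Z_{ji}),$$
where the stability condition (\ref{eq:stable-comp}) forces $|b|=o(\beta/\sqrt{n})$. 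Writing $a_i=\theta_{r(i)}-\theta_{\tilde r(i)}$ and $\epsilon_i=S_i-\mathbb{E}_r S_i$, I would expand the two objectives to obtain
$$\sum_{i=1}^n(S_i-\theta_{\tilde r(i)})^2-\sum_{i=1}^n(S_i-\theta_{r(i)})^2=\sum_{i=1}^n a_i^2+2b\sum_{i=1}^n a_i+2\sum_{i=1}^n a_i\epsilon_i,$$
so that the event in the lemma is exactly $\sum_i a_i\epsilon_i\leq-\tfrac12\sum_i a_i^2-b\sum_i a_i$.

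Next I would bound the deterministic threshold from below. The signal condition (\ref{eq:signal-comp}) gives $\sum_i a_i^2\geq\beta^2 m$ on $\mathcal{R}_m$, while applying (\ref{eq:stable-comp}) to both $r$ and $\tilde r$ yields $|\sum_i a_i|=o(\sqrt{n}\beta)$; combined with $|b|=o(\beta/\sqrt{n})$ this shows $|b\sum_i a_i|=o(\beta^2)$, which is lower order relative to $\tfrac12\sum_i a_i^2\geq\tfrac12\beta^2 m\geq\tfrac12\beta^2$. Hence the threshold equals $\tfrac12(1+o(1))\sum_i a_i^2$, uniformly over $\tilde r\in\mathcal{R}_m$.

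Then I would identify the random term as one sub-Gaussian variable. Collecting coefficients shows $\sum_i a_i\epsilon_i=\sum_{i\neq j}v_{ij}Z_{ij}$ with $v_{ij}=\tfrac{1}{2n}(a_i-a_j)$, and
$$\sum_{i\neq j}v_{ij}^2=\frac{1}{4n^2}\sum_{i,j}(a_i-a_j)^2=\frac{1}{2n}\sum_i a_i^2-\frac{1}{2n^2}\Big(\sum_i a_i\Big)^2=:\sigma_*^2.$$
Because $(\sum_i a_i)^2=o(n\beta^2)$ is again lower order relative to $n\sum_i a_i^2$, one gets $\sigma_*^2=(1+o(1))\tfrac{1}{2n}\sum_i a_i^2$. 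Normalizing by $\sigma_*$ and applying (\ref{eq:error}) with $t$ equal to the threshold over $\sigma_*$ gives
$$\mathbb{P}_r\Big(\sum_i a_i\epsilon_i\leq-\tfrac12\sum_i a_i^2-b\sum_i a_i\Big)\leq\exp\Big(-\frac{t^2}{2\sigma^2}\Big)=\exp\Big(-(1+o(1))\frac{n\sum_i a_i^2}{4\sigma^2}\Big)\leq\exp\Big(-(1+o(1))\frac{n\beta^2 m}{4\sigma^2}\Big),$$
where the last inequality uses $\sum_i a_i^2\geq\beta^2 m$; since the bound is uniform in $\tilde r\in\mathcal{R}_m$, taking the maximum yields the claim.

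The main obstacle is the careful bookkeeping of the $(1+o(1))$ factors so that they are uniform over $\tilde r\in\mathcal{R}_m$ and over $m$: both the bias contribution $b\sum_i a_i$ in the threshold and the correction $(\sum_i a_i)^2$ in $\sigma_*^2$ must be shown negligible against $\sum_i a_i^2\ (\geq\beta^2 m\geq\beta^2)$, and this is precisely what the stability condition (\ref{eq:stable-comp}) delivers through its uniform $o(\sqrt{n}\beta)$ control. I would also note that the degenerate case $\sigma_*^2=0$ (all $a_i$ equal) cannot arise, since it would force $\sum_i a_i^2=o(\beta^2)$, contradicting the signal lower bound; thus $\sigma_*^2$ stays bounded below by $(1+o(1))\tfrac{1}{2n}\sum_i a_i^2$ and the tail bound applies.
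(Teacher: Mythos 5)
Your proposal is correct and follows essentially the same route as the paper's proof: expand the quadratic difference, absorb the bias term $b\sum_i a_i$ using the stability condition (\ref{eq:stable-comp}), and apply the sub-Gaussian tail (\ref{eq:error}) to the linear noise term $\sum_{i\neq j}\frac{a_i-a_j}{2n}Z_{ij}$, finishing with (\ref{eq:signal-comp}). The only difference is that you make explicit the variance-proxy computation $\sigma_*^2=\frac{1}{2n}\sum_i a_i^2-\frac{1}{2n^2}\left(\sum_i a_i\right)^2$ (which the paper leaves implicit, and which in fact only needs the upper bound $\sigma_*^2\leq\frac{1}{2n}\sum_i a_i^2$), so the bookkeeping is sound.
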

\begin{proof}[Proof of Theorem \ref{thm:upper-comp}]
For the exponential rate in the regime $\frac{n\beta^2}{4\sigma^2}>1$, the proof is the same as that of Theorem \ref{thm:upper1}. One only needs to replace Lemma \ref{lem:dev} by Lemma \ref{lem:dev-comp}. Now we give the proof of the polynomial rate in the regime $\frac{n\beta^2}{4\sigma^2}\leq 1$. We use the notation
$\Delta_r=\sum_{j=1}^n\theta_{r(j)}-\sum_{j=1}^n\theta_j$.
By (\ref{eq:stable-comp}), we have $\max_{r\in\mathcal{R}}|\Delta_r|=o(\sqrt{n}\beta)$. Note that for each $i$, $\mathbb{E}_rS_i=\theta_{r(i)}-\Delta_r/n$. Using the condition (\ref{eq:signal-comp}), we have
\begin{eqnarray}
\nonumber \|\hat{r}-r\|^2 &\leq& \frac{1}{\beta^2}\sum_{i=1}^n(\theta_{\hat{r}(i)}-\theta_{r(i)})^2 \\
\nonumber &\leq& \frac{2}{\beta^2}\sum_{i=1}^n(\theta_{\hat{r}(i)}-S_i)^2 + \frac{2}{\beta^2}\sum_{i=1}^n(S_i-\theta_{r(i)})^2 \\
\label{eq:def-fm} &\leq& \frac{4}{\beta^2}\sum_{i=1}^n(S_i-\theta_{r(i)})^2 \\
\label{eq:pu} &\leq& \frac{8}{\beta^2}\sum_{i=1}^n(S_i-\theta_{r(i)}-\Delta_r/n)^2 + \frac{8\Delta_r^2}{n\beta^2},
\end{eqnarray}
where the inequality (\ref{eq:def-fm}) is by the definition of $\hat{r}$ in (\ref{eq:feature-matching}). We use the notation $Z_{ij}=X_{ij}-\mu_{r(i)r(j)}$. For each $i\in[n]$, we have
$$\mathbb{E}(S_i-\theta_{r(i)}-\Delta_r/n)^2=\mathbb{E}\left(\frac{1}{2n}\sum_{j\in[n]\backslash\{i\}}(Z_{ij}-Z_{ji})\right)^2\lesssim \frac{\sigma^2}{n},$$
by using the condition (\ref{eq:error}). Therefore, the bound (\ref{eq:pu}) implies
$$\mathbb{E}_r\ell_2(\hat{r},r)\lesssim \frac{\sigma^2}{n\beta^2}+\frac{\Delta_r^2}{n^2\beta^2},$$
where the second term $\frac{\Delta_r^2}{n^2\beta^2}$ is negligible given the condition (\ref{eq:stable-comp}) and $\frac{n\beta^2}{4\sigma^2}\leq 1$. Hence, we obtain the bound $\mathbb{E}_r\ell_2(\hat{r},r)\lesssim \frac{\sigma^2}{n\beta^2}\wedge n^2$. The bound for $\mathbb{E}_r\ell_q(\hat{r},r)$ is immediately implied by $\mathbb{E}_r\ell_q(\hat{r},r)\leq \mathbb{E}_r\left[\ell_2(\hat{r},r)^{q/2}\right]\leq \left[\mathbb{E}_r\ell_2(\hat{r},r)\right]^{q/2}$. The exact recovery result follows a simple application of Markov inequality as is done in the proof of Theorem \ref{thm:exact}. Thus, the proof is complete.
\end{proof}

\begin{lemma}\label{lem:dev-add}
Consider the additive collaboration model that satisfies (\ref{eq:error}) and (\ref{eq:signal-comp}).
For any $m$ such that $\mathcal{R}_m\neq\varnothing$, we have
$$\max_{\tilde{r}\in\mathcal{R}_m}\mathbb{P}_r\left(\sum_{i=1}^n(S_i-\theta_{\tilde{r}(i)})^2\leq \sum_{i=1}^n(S_i-\theta_{r(i)})^2\right)\leq\exp\left(-(1+o(1))\frac{n\beta^2m}{4\sigma^2}\right),$$
where $S_i$ and $\mathcal{R}_m$ are defined in (\ref{eq:score-coll}) and (\ref{eq:def-rm}).
\end{lemma}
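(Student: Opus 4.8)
The plan is to follow the same template as the proof of Lemma~\ref{lem:dev-comp}, reducing the comparison of the two objective values to a single sub-Gaussian tail bound via condition~(\ref{eq:error}). The crucial first observation is that, for the additive model $\mu_{ij}=\theta_i+\theta_j$, the score $S_i$ in (\ref{eq:score-coll}) is an \emph{exactly} unbiased estimator of $\theta_{r(i)}$: a direct computation gives $\mathbb{E}_r\sum_{j\neq i}(X_{ij}+X_{ji})=2(n-2)\theta_{r(i)}+2T$ and $\mathbb{E}_r\frac{1}{n-1}\sum_{k\neq l}X_{kl}=2T$ with $T=\sum_k\theta_{r(k)}$, so the two copies of $2T$ cancel and $\mathbb{E}_rS_i=\theta_{r(i)}$. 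This is why, unlike in the comparison case, no analogue of (\ref{eq:stable-comp}) is needed: the grand-mean subtraction removes the common shift exactly rather than only approximately. I would therefore set $\xi_i=S_i-\theta_{r(i)}$, a centered linear combination of the noises $Z_{ij}=X_{ij}-\mu_{r(i)r(j)}$.

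Next I would reduce the event. Writing $\delta_i=\theta_{\tilde{r}(i)}-\theta_{r(i)}$ and $D=\sum_{i=1}^n\delta_i^2$, expanding the squares shows
$$\sum_{i=1}^n(S_i-\theta_{\tilde{r}(i)})^2-\sum_{i=1}^n(S_i-\theta_{r(i)})^2 = D - 2\sum_{i=1}^n\delta_i\xi_i,$$
so the event in question is exactly $\{W\geq D/2\}$ with $W=\sum_{i=1}^n\delta_i\xi_i$. Condition~(\ref{eq:signal-comp}) gives $D\geq\beta^2\|\tilde{r}-r\|^2=\beta^2 m$, which supplies the signal strength; everything else is controlling the fluctuation of the linear form $W$.

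The heart of the argument is then to express $W=\sum_{a\neq b}v_{ab}Z_{ab}$ and bound its variance proxy $V^2=\sum_{a\neq b}v_{ab}^2$. Collecting the three places in which $Z_{ab}$ occurs (through $S_a$, through $S_b$, and through the grand-mean term) gives $v_{ab}=\frac{\delta_a+\delta_b}{2(n-2)}-\frac{\Sigma_\delta}{2(n-2)(n-1)}$ with $\Sigma_\delta=\sum_i\delta_i$. A short computation of $\sum_{a\neq b}(\delta_a+\delta_b)^2$, $\sum_{a\neq b}(\delta_a+\delta_b)$ and $\sum_{a\neq b}1$ yields the clean identity $V^2=\frac{D}{2(n-2)}-\frac{\Sigma_\delta^2}{4(n-2)(n-1)}\leq\frac{D}{2(n-2)}$, the key point being that the centering term only decreases $V^2$. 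Applying (\ref{eq:error}) after normalizing the coefficients then gives
$$\mathbb{P}_r\left(W\geq\frac{D}{2}\right)\leq\exp\left(-\frac{(D/2)^2}{2\sigma^2V^2}\right)\leq\exp\left(-\frac{(n-2)D}{4\sigma^2}\right)\leq\exp\left(-\frac{(n-2)\beta^2 m}{4\sigma^2}\right),$$
and since $(n-2)/n=1+o(1)$ this is the claimed bound; it is uniform in $\tilde{r}\in\mathcal{R}_m$, so the maximum over $\mathcal{R}_m$ is immediate.

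The step requiring the most care is the variance-proxy computation: because $Z_{ab}$ enters $W$ through both $S_a$ and $S_b$ as well as through the grand mean, the cross terms must be combined correctly, and one must verify that the grand-mean subtraction does not inflate $V^2$ (indeed it strictly helps). Once the identity $V^2\leq D/(2(n-2))$ is in hand, the remainder is a routine application of the sub-Gaussian condition and the signal lower bound.
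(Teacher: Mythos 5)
Your proposal is correct and follows essentially the same route as the paper: reduce the event to a tail bound for the linear form $\sum_i(\theta_{r(i)}-\theta_{\tilde r(i)})(S_i-\theta_{r(i)})$ and apply (\ref{eq:error}) together with (\ref{eq:signal-comp}). Your explicit computation of the variance proxy $V^2=\frac{D}{2(n-2)}-\frac{\Sigma_\delta^2}{4(n-2)(n-1)}\leq\frac{D}{2(n-2)}$ correctly fills in the ``direct calculation'' the paper leaves implicit and confirms the stated constant $\frac{n\beta^2 m}{4\sigma^2}$ up to the $(1+o(1))$ factor.
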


\begin{proof}[Proof of Theorem \ref{thm:upper-add}]
For the exponential rate in the regime $\frac{n\beta^2}{4\sigma^2}>1$, the proof is the same as that of Theorem \ref{thm:upper1}. One only needs to replace Lemma \ref{lem:dev} by Lemma \ref{lem:dev-add}. For the polynomial rate in the regime $\frac{n\beta^2}{4\sigma^2}\leq 1$, we use the same argument that leads to (\ref{eq:def-fm}), and we have
$$\|\hat{r}-r\|^2\leq \frac{4}{\beta^2}\sum_{i=1}^n(S_i-\theta_{r(i)})^2.$$
By the definition (\ref{eq:score-coll}) and the condition (\ref{eq:error}),
$$\mathbb{E}_r(S_i-\theta_{r(i)})^2=\mathbb{E}_r\left(\frac{1}{2(n-2)}\left(\sum_{j\in[n]\backslash\{i\}}(Z_{ij}+Z_{ji})-\frac{1}{n-1}\sum_{1\leq i\neq j\leq n}Z_{ij}\right)\right)^2\lesssim \frac{\sigma^2}{n}.$$
This implies $\mathbb{E}_r\ell_2(\hat{r},r)\lesssim \frac{\sigma^2}{n\beta^2}\wedge n^2$ and $\mathbb{E}_r\ell_q(\hat{r},r)\leq \mathbb{E}_r\left[\ell_2(\hat{r},r)^{q/2}\right]\leq \left[\mathbb{E}_r\ell_2(\hat{r},r)\right]^{q/2}\lesssim \left(\frac{\sigma^2}{n\beta^2}\wedge n^2\right)^{q/2}$. The exact recovery result follows a simple application of Markov inequality as is done in the proof of Theorem \ref{thm:exact}. Thus, the proof is complete.
\end{proof}

\begin{lemma}\label{lem:dev-PL}
Consider either the differential comparison model or the additive collaboration model with $\theta_i=\alpha+\tilde{\beta}i$ for some $\tilde{\beta}=(1+o(1))\beta$. Assume the condition (\ref{eq:error}) and $\frac{n\beta^2}{4\sigma^2}>1$. For any $m$ such that $\mathcal{R}_m\neq\varnothing$, we have
$$\max_{\tilde{r}\in\mathcal{R}_m'}\mathbb{P}_r\left(\text{PL}(\tilde{r})\leq \text{PL}(r)\right)\leq 2\exp\left(-(1+o(1))\left(1-C\sqrt{m/n^3}\right)\frac{n\beta^2m}{4\sigma^2}\right),$$
for some constant $C>0$,
where $\mathcal{R}_m'=\mathcal{R}_m\cap\mathcal{R}'$ and $S_i$ is defined in Section \ref{sec:para}.
\end{lemma}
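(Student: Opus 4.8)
The plan is to recast the profile least-squares objective as the residual of an orthogonal projection and then mimic the argument behind Lemma \ref{lem:dev-comp}, the only genuinely new ingredient being the control of how much signal survives after the intercept and slope are profiled out. Write $\hat S=(\hat S_1,\dots,\hat S_n)^{\top}$, let $V_r=\mathrm{span}\{\mathbf 1,r\}$, and let $P_r$ be the orthogonal projection onto $V_r$. Since minimizing over $(a,b)$ is exactly projection onto $V_r$, we have $\mathrm{PL}(r)=\|(I-P_r)\hat S\|^2$ and $\mathrm{PL}(\tilde r)=\|(I-P_{\tilde r})\hat S\|^2$. First I would compute $\mathbb{E}_r\hat S$ for both scores: a direct calculation gives $\mathbb{E}_r\hat S_i=\tilde\beta(r(i)-\bar r)$ in the comparison model and $\mathbb{E}_r\hat S_i=\theta_{r(i)}+\tfrac{n}{n-2}\bar\theta_r$ in the collaboration model, so that in either case $\mathbb{E}_r\hat S=c_0\mathbf 1+\tilde\beta r\in V_r$ for a scalar $c_0$. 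Hence, writing $W=\hat S-\mathbb{E}_r\hat S$ and $h=(I-P_{\tilde r})\mathbb{E}_r\hat S=\tilde\beta(I-P_{\tilde r})r$, the identity $(I-P_r)\mathbb{E}_r\hat S=0$ reduces the event $\{\mathrm{PL}(\tilde r)\le\mathrm{PL}(r)\}$ to
\[
s^2+2\langle h,W\rangle\le \langle W,(P_{\tilde r}-P_r)W\rangle,\qquad s^2:=\|h\|^2=\tilde\beta^2\|(I-P_{\tilde r})r\|^2 .
\]

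The crux is a lower bound on the surviving signal $\|(I-P_{\tilde r})r\|^2$. Using $(I-P_{\tilde r})\tilde r=0$ and $\delta:=r-\tilde r$ with $\|\delta\|^2=m$, this equals $m-\|P_{\tilde r}\delta\|^2$, and with the orthogonal basis $\{\mathbf 1,\psi\}$ of $V_{\tilde r}$, $\psi=\tilde r-\bar{\tilde r}\mathbf 1$, one has $\|P_{\tilde r}\delta\|^2=\langle\mathbf 1,\delta\rangle^2/n+\langle\psi,\delta\rangle^2/\|\psi\|^2$. Here the membership $r,\tilde r\in\mathcal R'$ is essential: pinning both $\sum_i r(i)$ and $\sum_i r(i)^2$ (and likewise for $\tilde r$) near $\sum_i i$ and $\sum_i i^2$ forces $\langle\mathbf 1,\delta\rangle=O(c_n)$, $\|\psi\|^2=\tfrac{n^3}{12}(1+o(1))$, and, through $2\langle\tilde r,\delta\rangle+\|\delta\|^2=\sum_i r(i)^2-\sum_i\tilde r(i)^2$, the value $\langle\psi,\delta\rangle=-m/2+O(c_n')$. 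The leading term thus contributes $\langle\psi,\delta\rangle^2/\|\psi\|^2=\tfrac{3m^2}{n^3}(1+o(1))\le 3m\sqrt{m/n^3}$ for $m\le n^3$, so that $\|(I-P_{\tilde r})r\|^2\ge(1-C\sqrt{m/n^3})(1+o(1))\,m$. Geometrically this says $r$ and $\tilde r$ cannot be a nontrivial affine reparametrization of one another — the exact degeneracy to which profiling would be blind — precisely because they share their first two moments; this is the step that produces the factor $1-C\sqrt{m/n^3}$ absent from Lemma \ref{lem:dev-comp}.

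For the probability I would treat $2\langle h,W\rangle$ as the dominant linear term and the right-hand side $Q:=\langle W,(P_{\tilde r}-P_r)W\rangle$ as lower order. Expanding $\langle h,W\rangle$ in the $Z_{ij}$'s (for the comparison model $\langle h,W\rangle=\tfrac1{2n}\sum_{i\ne j}(h_i-h_j)Z_{ij}$, with the analogous expression for the collaboration model) and using $\langle\mathbf 1,h\rangle=0$, the coefficient vector has squared $\ell_2$-norm $\tfrac{\|h\|^2}{2n}(1+o(1))=\tfrac{s^2}{2n}(1+o(1))$, so condition (\ref{eq:error}) gives $\mathbb{P}_r(-\langle h,W\rangle\ge t)\le\exp(-\tfrac{nt^2}{s^2\sigma^2}(1+o(1)))$. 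Since $P_{\tilde r}-P_r=P_\psi-P_\phi$ is a difference of rank-one projections ($\phi=r-\bar r\mathbf 1$), $Q\le\langle\psi,W\rangle^2/\|\psi\|^2$, a squared sub-Gaussian functional of scale $\sigma^2/n$; splitting on $\{Q\le\epsilon s^2\}$ the first piece yields $\exp(-(1-\epsilon)^2\tfrac{ns^2}{4\sigma^2})$ and the deviation $\{Q>\epsilon s^2\}$ contributes a second exponential, accounting for the leading factor $2$. Inserting the signal bound above together with $\tilde\beta=(1+o(1))\beta$ turns $\tfrac{ns^2}{4\sigma^2}$ into $(1+o(1))(1-C\sqrt{m/n^3})\tfrac{n\beta^2m}{4\sigma^2}$, which is the claim.

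The hard part will be the interaction of the two error sources together with the uniformity over $m$. Near the boundary $\tfrac{n\beta^2}{4\sigma^2}\downarrow1$ with $m=1$, the signal $s^2\asymp\beta^2m$ is of the same order $\sigma^2/n$ as the quadratic term $Q$, so the split only gives a bound of constant order; one must check that the claimed right-hand side is itself of constant order there and choose $\epsilon=\epsilon_n\to0$ so that the sharp $(1+o(1))$ constant survives whenever $\tfrac{ns^2}{\sigma^2}\to\infty$. For $m\gtrsim n^3$ the signal lower bound degrades and $1-C\sqrt{m/n^3}$ may turn negative, rendering the statement vacuously true, so the genuine work is confined to the moderate-$m$ range. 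Establishing the signal lower bound with the sharp $(1+o(1))$ factor — rather than merely a constant multiple of $m$ — while keeping the $\mathcal R'$ moment slacks $c_n,c_n'$ controlled in the expansion of $\langle\psi,\delta\rangle$, is the most delicate bookkeeping.
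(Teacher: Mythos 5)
Your setup matches the paper's: you recast $\text{PL}(r)$ as the squared residual of the projection onto $\Span\{\mathbbm{1},r\}$, use $\mathbb{E}_r\hat S=\tilde\beta r+c_0\mathbbm{1}$ to kill the term $(I-P_r)\mathbb{E}_r\hat S$, and split the event into a linear term $\iprod{h}{W}$ and a quadratic term $Q=\iprod{W}{(P_{\tilde r}-P_r)W}$. Your lower bound on the surviving signal $\|(I-P_{\tilde r})r\|^2$ via the moment constraints of $\mathcal{R}'$ is essentially the paper's computation (and your explicit $3m^2/n^3$ accounting is, if anything, more careful than the paper's $o(1)\|\tilde r-r\|^2$). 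The difference is where the factor $1-C\sqrt{m/n^3}$ comes from: you extract it from the signal, while the paper keeps the signal at $(1+o(1))m$ and generates that factor through the splitting parameter $t\asymp\sqrt{m/n^3}$.

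The genuine gap is in your treatment of $Q$. Bounding $Q\le \iprod{\psi}{W}^2/\|\psi\|^2$ discards the near-cancellation between the two rank-one projections, and the resulting tail is too weak to preserve the sharp constant. Concretely, $\iprod{\psi}{W}$ is sub-Gaussian with variance proxy $\|\psi\|^2\sigma^2/(2n)$, so the best you get is $\mathbb{P}_r(Q>\epsilon s^2)\le 2\exp(-\epsilon n s^2/\sigma^2)$ with $s^2\asymp\beta^2 m$. To make this smaller than the target $\exp(-(1+o(1))\tfrac{n\beta^2 m}{4\sigma^2})$ you need $\epsilon\ge 1/4$ up to $o(1)$, but then the linear piece only yields $\exp(-(1-\epsilon)^2\tfrac{ns^2}{4\sigma^2})$ with $(1-\epsilon)^2\le 9/16$, a constant loss in the exponent; choosing $\epsilon=\epsilon_n\to 0$ instead makes the quadratic piece dominate (e.g.\ for $m=1$ and $\tfrac{n\beta^2}{4\sigma^2}=\log n$ it gives $2n^{-4\epsilon_n}\to 2$ against a target of $n^{-1+o(1)}$). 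The missing ingredient is that $P_{\tilde r}-P_r$ is not merely a difference of rank-one projections but a \emph{small} one: since $r,\tilde r\in\mathcal{R}'$ share their first two moments and $\|\tilde r-r\|^2=m$, one has $\fnorm{H_{\tilde r}-H_r}\lesssim\sqrt{m/n^3}$. The paper exploits this by applying the Hanson--Wright inequality to $Z^TAZ$ with $A=(H_{\tilde r}-H_r)(H_{\tilde r}+H_r)$, obtaining a deviation bound of order $\exp(-C t\, n^{3/2}\sqrt{m}\,\tfrac{n\beta^2}{\sigma^2})$ --- a gain of $n^{3/2}/\sqrt{m}$ over your bound --- which is what lets $t\asymp\sqrt{m/n^3}$ be taken small while still beating the target. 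Without quantifying $\fnorm{P_{\tilde r}-P_r}$, the argument cannot close in the regime of small $m$ and large SNR.
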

\begin{lemma}\label{lem:uniform-PL}
Consider either the differential comparison model or the additive collaboration model with $\theta_i=\alpha+\tilde{\beta}i$ for some $\tilde{\beta}=(1+o(1))\beta$. Assume the condition (\ref{eq:error}).
For any $r\in\mathcal{R}'$, and $t>0$ and any $l\geq 1$, we have
$$\mathbb{P}_{r}\left(\min_{\{\tilde{r}\in\mathcal{R}': tl<\|\tilde{r}-r\|\leq t(l+1)\}} \text{PL}(\tilde{r})\leq \text{PL}(r)\right)\leq 3\exp\left(-n\left(\frac{C_1\beta^2t^2l^2}{\sigma^2}-C_2\right)\right),$$
where $C_1,C_2>0$ are some universal constants.
\end{lemma}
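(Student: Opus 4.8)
The plan is to recast $\text{PL}$ as the squared norm of a projected score and thereby turn the uniform comparison over a whole shell into a single tail bound on the score noise. Writing $\mathbf{1}$ for the all-ones vector and, for any $s\in\mathcal{R}'$, letting $P_s$ be the orthogonal projection onto $\text{span}\{\mathbf{1},s\}$, one has $\text{PL}(s)=\|(I-P_s)\hat S\|^2$ with $\hat S=(\hat S_1,\dots,\hat S_n)$. Under $\theta_i=\alpha+\tilde\beta i$, a direct computation of the scores of Section \ref{sec:para} gives, for both the differential comparison and the additive collaboration model, $\hat S=\tilde\beta\, r+c\,\mathbf{1}+W$ for a scalar $c$ and a noise vector $W$ whose coordinates are $(2n)^{-1}$- (resp.\ $(2(n-2))^{-1}$-) scaled linear combinations of the $Z_{ij}$. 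Since $\tilde\beta r+c\,\mathbf{1}\in\text{span}\{\mathbf{1},r\}$, this yields the exact identity $\text{PL}(r)=\|(I-P_r)W\|^2\le\|W\|^2$, while for any competitor $\text{PL}(\tilde r)=\|\,g_{\tilde r}+(I-P_{\tilde r})W\,\|^2\ge(\|g_{\tilde r}\|-\|W\|)_+^2$, where $g_{\tilde r}:=\tilde\beta(I-P_{\tilde r})r$ is a deterministic signal.

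First I would lower bound $\|g_{\tilde r}\|$ uniformly over the shell. Because $(I-P_{\tilde r})\tilde r=0$, we have $g_{\tilde r}=\tilde\beta(I-P_{\tilde r})(r-\tilde r)$, so $\|g_{\tilde r}\|^2=\tilde\beta^2\big(\|\tilde r-r\|^2-\|P_{\tilde r}(r-\tilde r)\|^2\big)$, and the membership $r,\tilde r\in\mathcal{R}'$ is used precisely here. The constraints $|\sum_i r(i)-\sum_i i|\le c_n$ and $|\sum_i r(i)^2-\sum_i i^2|\le c_n'$ force $\langle\mathbf{1},r-\tilde r\rangle=O(c_n)$ and $\langle\tilde r,r-\tilde r\rangle=-\tfrac12\|\tilde r-r\|^2+O(c_n')$, while $\|\tilde r^{\perp}\|^2=(1+o(1))\,n^3/12$ for $\tilde r^{\perp}:=\tilde r-(\tfrac1n\sum_i\tilde r(i))\mathbf{1}$. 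Expanding $\|P_{\tilde r}(r-\tilde r)\|^2=\langle\mathbf{1},r-\tilde r\rangle^2/n+\langle\tilde r^\perp,r-\tilde r\rangle^2/\|\tilde r^\perp\|^2$ then gives $\|P_{\tilde r}(r-\tilde r)\|^2\le(1+o(1))\,3\|\tilde r-r\|^4/n^3+o(\|\tilde r-r\|^2)$, hence $\|g_{\tilde r}\|^2\ge c\,\tilde\beta^2\|\tilde r-r\|^2\ge c\beta^2t^2l^2$ whenever $\|\tilde r-r\|^2$ stays a fixed fraction below $n^3$ (this is the same phenomenon recorded by the $(1-C\sqrt{m/n^3})$ factor in Lemma \ref{lem:dev-PL}). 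Combining with the displayed inequalities, the event $\text{PL}(\tilde r)\le\text{PL}(r)$ forces $\|g_{\tilde r}\|\le 2\|W\|$, so the full uniform event is contained in $\{\|W\|\ge \tfrac12\min_{\text{shell}}\|g_{\tilde r}\|\}\subseteq\{\|W\|\ge c'\beta tl\}$, a single tail event no longer involving $\tilde r$.

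The core probabilistic step is then to control $\|W\|=\sup_{\|v\|=1}\langle v,W\rangle$. For the comparison model, for each fixed unit $v$, $\langle v,W\rangle=\tfrac1{2n}\sum_{i\neq j}(v_i-v_j)Z_{ij}$, and since $\sum_{i\neq j}(v_i-v_j)^2\le 2n\|v\|^2$, condition (\ref{eq:error}) yields $\mathbb{P}_r(\langle v,W\rangle>s)\le\exp(-ns^2/\sigma^2)$ (an analogous computation gives the same scale $\sigma^2/n$ for the collaboration score). Taking a $1/2$-net $N$ of $S^{n-1}$ with $|N|\le 5^n$ and using $\|W\|\le 2\max_{v\in N}\langle v,W\rangle$, a union bound gives $\mathbb{P}_r(\|W\|\ge u)\le\exp\!\big(n\log 5-nu^2/(4\sigma^2)\big)$. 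Substituting $u=c'\beta tl$ produces exactly the advertised form $\exp\!\big(-n(C_1\beta^2t^2l^2/\sigma^2-C_2)\big)$ with $C_2=\log 5$ and $C_1=c'^2/4$; the negligible $\langle\mathbf{1},W\rangle/\sqrt n$ contribution to $P_{\tilde r}W$ can be split off and handled by the same sub-Gaussian estimate, which accounts for the prefactor $3$.

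The main obstacle is the uniform signal lower bound in the second paragraph, not the noise bound. The projection loss $\|P_{\tilde r}(r-\tilde r)\|^2$ approaches $\|\tilde r-r\|^2$ precisely along the directions where $r-\tilde r\in\text{span}\{\mathbf{1},\tilde r\}$ — the near-reversal configurations, where $g_{\tilde r}\to 0$ because a profile fit with negative slope reproduces the signal. The constraint $|\sum_i r(i)^2-\sum_i i^2|\le c_n'$ defining $\mathcal{R}'$ is exactly what quantitatively pins down $\langle\tilde r,r-\tilde r\rangle$ and so keeps $\|P_{\tilde r}(r-\tilde r)\|^2\lesssim \|\tilde r-r\|^4/n^3$; verifying that this control delivers a uniform constant $c>0$ in $\|g_{\tilde r}\|^2\ge c\beta^2\|\tilde r-r\|^2$ across the shell, and checking that the residual error terms only cost the same $(1-C\sqrt{m/n^3})$-type factor already present in Lemma \ref{lem:dev-PL}, is the delicate part of the argument, while the remaining steps are routine projection algebra and a standard net bound.
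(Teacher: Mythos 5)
Your proposal is correct in outline and takes a genuinely different, and in fact lighter, route through the probabilistic half of the argument. The paper splits $\text{PL}(\tilde r)-\text{PL}(r)$ into a linear term $2\tilde\beta\iprod{(I-H_{\tilde r})r}{\hat S-\mathbb{E}_r\hat S}$ and a difference of quadratic forms $\|H_r(\hat S-\mathbb{E}_r\hat S)\|^2-\|H_{\tilde r}(\hat S-\mathbb{E}_r\hat S)\|^2$, relaxes the first to a supremum over a Euclidean ball of directions $v$ and the second to a supremum over rank-$2$ symmetric matrices of bounded Frobenius norm, and controls each by its own $e^{Cn}$-point net, the latter via the Hanson--Wright inequality (displays (\ref{eq:metropolis}) and (\ref{eq:metropolis-pt2})). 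Your triangle-inequality reduction $\text{PL}(r)\le\|W\|^2$ and $\text{PL}(\tilde r)\ge(\|g_{\tilde r}\|-\|W\|)_+^2$ collapses the entire shell event into the single tail event $\{\|W\|\ge\tfrac12\min\|g_{\tilde r}\|\}$, so one net bound on $\|W\|$ suffices and the quadratic-form step disappears. Both arguments pay the same entropy price $e^{Cn}$, which is where $-C_2 n$ in the exponent comes from, and your per-direction sub-Gaussian scale $\sigma^2/n$ for $\iprod{v}{W}$ matches the paper's computation. What the paper's heavier decomposition buys is nothing here; your route is the cleaner one for this lemma.

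The deterministic half --- the uniform lower bound on $\|g_{\tilde r}\|=|\tilde\beta|\,\|(I-H_{\tilde r})r\|$ --- is where you and the paper coincide, and where both arguments are incomplete in the same way. Your expansion of $\|P_{\tilde r}(r-\tilde r)\|^2$ using the constraints defining $\mathcal{R}'$ is exactly the computation behind (\ref{eq:lunar})--(\ref{eq:under}), and your observation that the resulting bound degenerates as $\|\tilde r-r\|^2$ approaches a constant multiple of $n^3$ is accurate --- but it is not a defect you introduced. The paper's claim that the first term of (\ref{eq:spirit}) is $o(1)\|\tilde r-r\|^2$ requires $\|\tilde r-r\|^2=o(n^3)$, which fails on the outermost shells: the exact reversal $\tilde r=(n+1)\mathbbm{1}-r$ lies in $\mathcal{R}'$, spans the same column space as $r$, and satisfies $\text{PL}(\tilde r)=\text{PL}(r)$ identically, so $\|(I-H_{\tilde r})r\|=0$ there and no bound of the advertised form can hold without excluding such configurations. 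The ``delicate part'' you flag is therefore a genuine gap, but one shared with (indeed inherited from) the paper's own proofs of Lemma \ref{lem:dev-PL} and Lemma \ref{lem:uniform-PL}; closing it requires either restricting to $\|\tilde r-r\|^2\le(1-\epsilon)n^3/3$ or modifying $\mathcal{R}'$ to break the reversal symmetry, in your argument and in the paper's alike.
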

\begin{proof}[Proof of Theorem \ref{thm:PL}]
The exponential rate is by the same proof of Theorem \ref{thm:upper1} with the help of Lemma \ref{lem:card} and Lemma \ref{lem:dev-PL}. For the polynomial rate, we define the set
$$\tilde{\mathcal{R}}_l'=\left\{\tilde{r}\in\mathcal{R}: tl<\|\tilde{r}-r\|\leq t(l+1)\right\}.$$
Then, we have $
\mathbb{P}_r(\|\hat{r}-r\|>t) \leq \sum_{l=1}^{\infty}\mathbb{P}_r\left(\hat{r}\in\tilde{\mathcal{R}}'_l\right) \leq \sum_{l=1}^{\infty}\mathbb{P}_r\left(\inf_{\tilde{r}\in\tilde{\mathcal{R}}_l'}\text{PL}(\tilde{r})\leq \text{PL}(r)\right)$. By Lemma \ref{lem:uniform-PL}, we have the bound
$$3\sum_{l=1}^{\infty}\exp\left(-n\left(\frac{C_1\beta^2t^2l^2}{\sigma^2}-C_2\right)\right)\lesssim \exp\left(-n\left(\frac{C_1\beta^2t^2}{\sigma^2}-C_2\right)\right).$$
Therefore, there are some constants $C,C'>0$, such that
$$\mathbb{P}_r\left(\|\hat{r}-r\|^2> C\frac{\sigma^2}{\beta^2}(1+x)\right)\lesssim \exp\left(-C'nx\right),$$
for any $x>0$.
Integrating up the tail, we obtain the desired polynomial convergence rate for $\mathbb{E}_r\ell_2(\hat{r},r)$. The result for $\mathbb{E}_r\ell_q(\hat{r},r)$ is by the inequality $\mathbb{E}_r\ell_q(\hat{r},r)\leq \mathbb{E}_r\left[\ell_2(\hat{r},r)^{q/2}\right]\leq \left[\mathbb{E}_r\ell_2(\hat{r},r)\right]^{q/2}$. The exact recovery result follows a simple application of Markov inequality as is done in the proof of Theorem \ref{thm:exact}. The proof is complete.
\end{proof}

\subsection{Proofs of Theorem \ref{thm:upper-Poi}, Theorem \ref{thm:lower-Poi} and Theorem \ref{thm:exact-Poi}}\label{sec:pf-thm-last}

We first give a lemma to facilitate the proof of Theorem \ref{thm:upper-Poi}.
\begin{lemma}\label{lem:dev-Poi}
Assume (\ref{eq:signal-Poi}).
For any $m$ such that $\mathcal{R}_m\neq \varnothing$ that is defined in (\ref{eq:def-rm}), we have
$$\max_{\tilde{r}\in\mathcal{R}_m}\mathbb{P}_r\left(\prod_{1\leq i\neq j\leq n}\frac{\mu_{\tilde{r}(i)\tilde{r}(j)}^{X_{ij}}e^{-\mu_{\tilde{r}(i)\tilde{r}(j)}}}{X_{ij}!}\geq \prod_{1\leq i\neq j\leq n}\frac{\mu_{r(i)r(j)}^{X_{ij}}e^{-\mu_{r(i)r(j)}}}{X_{ij}!}\right)\leq \exp\left(-n\beta^2m\right).$$
\end{lemma}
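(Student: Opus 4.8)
The plan is to prove this via a Chernoff bound on the log-likelihood ratio, where the decisive move is to use the exponent $\lambda=1/2$, which turns the cumulant computation into a squared-Hellinger (i.e., square-root) distance and thereby makes condition (\ref{eq:signal-Poi}) directly applicable. To set up notation, write $a_{ij}=\mu_{r(i)r(j)}$ for the true Poisson means and $b_{ij}=\mu_{\tilde{r}(i)\tilde{r}(j)}$ for the alternative means. Since the factorials $X_{ij}!$ cancel, taking logarithms shows that the event in the statement is exactly $\{T\geq 0\}$, where
$$
T=\sum_{1\leq i\neq j\leq n}\left[X_{ij}\log\frac{b_{ij}}{a_{ij}}-(b_{ij}-a_{ij})\right].
$$
First I would record that under $\mathbb{P}_r$ the $X_{ij}$ are independent with $X_{ij}\sim\text{Poisson}(a_{ij})$, so that for any $\lambda>0$ the Chernoff inequality gives $\mathbb{P}_r(T\geq 0)\leq \mathbb{E}_r\, e^{\lambda T}$, and by independence this factorizes over the pairs $(i,j)$.

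The core computation is the per-pair exponential moment. Using the Poisson moment generating function $\mathbb{E}\, e^{sX_{ij}}=\exp\big(a_{ij}(e^{s}-1)\big)$ with $s=\lambda\log(b_{ij}/a_{ij})$, the $(i,j)$ factor of $\mathbb{E}_r\,e^{\lambda T}$ equals $\exp\big(a_{ij}[(b_{ij}/a_{ij})^{\lambda}-1]-\lambda(b_{ij}-a_{ij})\big)$. Here I would specialize to $\lambda=1/2$, for which $(b_{ij}/a_{ij})^{1/2}=\sqrt{b_{ij}/a_{ij}}$, and the exponent collapses to
$$
\sqrt{a_{ij}b_{ij}}-\tfrac12 a_{ij}-\tfrac12 b_{ij}=-\tfrac12\left(\sqrt{a_{ij}}-\sqrt{b_{ij}}\right)^2.
$$
Multiplying over all pairs then yields
$$
\mathbb{E}_r\,e^{T/2}=\exp\left(-\tfrac12\sum_{1\leq i\neq j\leq n}\left(\sqrt{\mu_{r(i)r(j)}}-\sqrt{\mu_{\tilde{r}(i)\tilde{r}(j)}}\right)^2\right).
$$
Finally I would invoke condition (\ref{eq:signal-Poi}), which lower-bounds the sum in the exponent by $2n\beta^2\|\tilde{r}-r\|^2=2n\beta^2 m$ since $\tilde{r}\in\mathcal{R}_m$, giving $\mathbb{P}_r(T\geq 0)\leq \exp(-n\beta^2 m)$; taking the maximum over $\tilde{r}\in\mathcal{R}_m$ is harmless because the bound depends on $\tilde{r}$ only through $m$.

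I do not expect a serious obstacle here: once the exponent $\lambda=1/2$ is chosen, every step is a short and exact identity, and this is precisely why the square-root transformation enters the signal condition (\ref{eq:signal-Poi}) rather than a plain $\ell_2$ condition as in (\ref{eq:signal}). The only point requiring minor care is the treatment of degenerate means: if some $a_{ij}=0$ then $X_{ij}=0$ almost surely and the term $\log(b_{ij}/a_{ij})$ is ill-defined, so I would either restrict to strictly positive means (natural for a Poisson model) or note that the cumulant identity and the final Hellinger form $-\tfrac12(\sqrt{a_{ij}}-\sqrt{b_{ij}})^2$ extend continuously to the boundary, so the bound is unaffected.
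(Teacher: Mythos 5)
Your proposal is correct and is essentially the paper's own argument: the Chernoff bound at $\lambda=1/2$ applied to the log-likelihood ratio is identical to the paper's step $\mathbb{P}_r(Y\geq 1)\leq \mathbb{E}_r\sqrt{Y}$ applied to the likelihood ratio, and both reduce the exponent to the Hellinger-type sum $\tfrac12\sum_{i\neq j}\bigl(\sqrt{\mu_{\tilde{r}(i)\tilde{r}(j)}}-\sqrt{\mu_{r(i)r(j)}}\bigr)^2$, which condition (\ref{eq:signal-Poi}) bounds below by $2n\beta^2 m$. The only implicit ingredient in both arguments is the independence of the $X_{ij}$'s, which is part of the Poisson model setup rather than of the lemma's stated hypotheses.
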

\begin{proof}[Proof of Theorem \ref{thm:upper-Poi}]
The proof is essentially the same as that of Theorem \ref{thm:upper1}. The only difference is that we use Lemma \ref{lem:dev-Poi} instead of Lemma \ref{lem:dev}. Therefore, we only need to replace all the $\frac{n\beta^2}{4\sigma^2}$ in the proof of Theorem \ref{thm:upper1} by $n\beta^2$ and obtain the desired result.
\end{proof}

\begin{proof}[Proof of Theorem \ref{thm:lower-Poi}]
We consider the distribution $X_{ij}\sim\text{Poisson}(\mu_{r(i)r(j)})$, where $\sqrt{\mu_{ij}}=2\alpha+\tilde{\beta}(i+j)$. We set $\alpha=\tilde{\beta}n^2$. Note that for any $r,\tilde{r}\in\mathcal{R}$,
\begin{eqnarray*}
&& \sum_{1\leq i\neq j\leq n}\left(\sqrt{\mu_{\tilde{r}(i)\tilde{r}(j)}}-\sqrt{\mu_{r(i)r(j)}}\right)^2 \\
&=& \tilde{\beta}^2\sum_{1\leq i\neq j\leq n}\left(\tilde{r}(i)+\tilde{r}(j)-r(i)-r(j)\right)^2 \\
&=& \tilde{\beta}^2\left(2(n-2)\|\tilde{r}-r\|^2+2\left(\sum_{i=1}^n(r(i)-\tilde{r}(i))\right)^2\right) \\
&=& 2(1+o(1))n\tilde{\beta}^2\|\tilde{r}-r\|^2.
\end{eqnarray*}
Therefore, the condition (\ref{eq:signal-Poi}) is satisfied with some $\beta=(1+o(1))\tilde{\beta}$. We first derive polynomial lower bounds when $n\beta^2\leq 1$. We use the same argument in the proof of Theorem \ref{thm:lower}. The only difference is the calculation of $d_{\text{KL}}(T)$. For any $r_t,r_{t'}\in T\subset\mathcal{R}$,
\begin{eqnarray}
\nonumber && D\left(\otimes_{1\leq i\neq j\leq n}\text{Poisson}(\mu_{r(i)r(j)})\|\otimes_{1\leq i\neq j\leq n}\text{Poisson}(\mu_{\tilde{r}(i)\tilde{r}(j)})\right) \\
\nonumber &=& \sum_{1\leq i\neq j\leq n}\left(\mu_{r(i)r(j)}\log\frac{\mu_{r(i)r(j)}}{\mu_{\tilde{r}(i)\tilde{r}(j)}}-\mu_{r(i)r(j)}+\mu_{\tilde{r}(i)\tilde{r}(j)}\right) \\
\nonumber &\leq& \sum_{1\leq i\neq j\leq n}\frac{(\mu_{r(i)r(j)}-\mu_{\tilde{r}(i)\tilde{r}(j)})^2}{\mu_{\tilde{r}(i)\tilde{r}(j)}} \\
\nonumber &=& 4(1+o(1))\sum_{1\leq i\neq j\leq n}\frac{(\mu_{r(i)r(j)}-\mu_{\tilde{r}(i)\tilde{r}(j)})^2}{(\sqrt{\mu_{r(i)r(j)}}+\sqrt{\mu_{\tilde{r}(i)\tilde{r}(j)}})^2},
\end{eqnarray}
where the last inequality is because $\max_{i,j,i',j'}\sqrt{\frac{\mu_{ij}}{\mu_{i'j'}}}= 1+o(1)$. This leads to the bound
$$d_{\text{KL}}(T)\lesssim \max_{r_t,r_{t'}\in T}n\tilde{\beta}^2\|r_t-r_{t'}\|^2.$$
Therefore, the same argument in the proof of Theorem \ref{thm:lower} will go through if we replace every $\frac{n\tilde{\beta}^2}{4\sigma^2}$ by $n\tilde{\beta}^2$. This leads to the desired lower bound $\inf_{\hat{r}}\sup_{r\in\mathcal{R}}\ell_q(\hat{r},r)\gtrsim (n\beta^2)^{-q/2}\wedge n^q$. 

Now we give exponential lower bounds when $n\beta^2> 1$. By the same argument in the proof of Theorem \ref{thm:lower}, the lower bounds are determined by the following quantity
\begin{equation}
\mathbb{P}_r\left(\prod_{1\leq i\neq j\leq n}\frac{\mu_{\tilde{r}(i)\tilde{r}(j)}^{X_{ij}}e^{-\mu_{\tilde{r}(i)\tilde{r}(j)}}}{X_{ij}!}> \prod_{1\leq i\neq j\leq n}\frac{\mu_{r(i)r(j)}^{X_{ij}}e^{-\mu_{r(i)r(j)}}}{X_{ij}!}\right),\label{eq:toL-Poi}
\end{equation}
for some $r$ and $\tilde{r}$ that satisfy $\|\tilde{r}-r\|^2=1$. Without loss of generality, We assume $|\tilde{r}(1)-r(1)|=1$ and $\tilde{r}(i)=r(i)$ for all $i>1$. Then, the above probability can be written as
$$\mathbb{P}_r\left(\prod_{i=2}^n p\left(X_{1i}|\mu_{\tilde{r}(1)i}\right)p\left(X_{i1}|\mu_{i\tilde{r}(1)}\right)>\prod_{i=2}^n p\left(X_{1i}|\mu_{{r}(1)i}\right)p\left(X_{i1}|\mu_{i{r}(1)}\right)\right),$$
where we use $p(X|\mu)$ to denote the probability mass function of $\text{Poisson}(\mu)$. For any $t\in(0,1)$, we use the following general argument
\begin{eqnarray*}
P\left(\frac{q}{p}(X)>1\right) &\geq& P\left(0<t\log\frac{q}{p}(X)<L\right) \\
&=& \int_{\{0<t\log\frac{q}{p}(x)<L\}} p(x) \\
&=& \int_{\{0<t\log\frac{q}{p}(x)<L\}} \frac{p(x)\frac{q(x)^t}{p(x)^t}\int p^{1-t}q^t}{\frac{q(x)^t}{p(x)^t}\int p^{1-t}q^t} \\
&\geq& \int p^{1-t}q^t e^{-L}\int p_t(x) \\
&=& \exp\left(\log \int p^{1-t}q^t -L\right)P_t\left(0<t\log\frac{q}{p}(X)<L\right),
\end{eqnarray*}
where the distribution $P_t$ has density function proportional to $p^{1-t}q^t$. In general, we choose $t$ to minimize $\log \int p^{1-t}q^t$ and $L=t\sqrt{P_t\left(\log\frac{q}{p}\right)^2}$. For $P=\otimes_i\text{Poisson}(\mu_i)$ and $Q=\otimes_i\text{Poisson}(\tilde{\mu}_i)$, direct calculation gives
$$-\log\int p^{1-t}q^t=\sum_i\left(t\tilde{\mu}_i+(1-t)\mu_i-\tilde{\mu}_i^t\mu_i^{1-t}\right).$$
Moreover, $P_t=\otimes_i\text{Poisson}(\mu_i^{1-t}\tilde{\mu}_i^t)$. Under the current setting, $\max_i\frac{|\sqrt{\mu_i}-\sqrt{\tilde{\mu}_i}|}{\min(\sqrt{\mu_i},\sqrt{\tilde{\mu}_i})}=o(1)$. Therefore, a Taylor expansion argument gives the bound
$$\sum_i\left(t\tilde{\mu}_i+(1-t)\mu_i-\tilde{\mu}_i^t\mu_i^{1-t}\right)\leq 2(1+o(1))t(1-t)\sum_i\left(\sqrt{\mu_i}-\sqrt{\tilde{\mu}_i}\right)^2.$$
This leads to
\begin{eqnarray*}
&& \max_{t\in(0,1)}\sum_i\left(t\tilde{\mu}_i+(1-t)\mu_i-\tilde{\mu}_i^t\mu_i^{1-t}\right) \\
&\leq& \frac{1+o(1)}{2}\sum_i\left(\sqrt{\mu_i}-\sqrt{\tilde{\mu}_i}\right)^2 \\
&=&  \frac{1+o(1)}{2}\sum_{i=2}^n\left(\sqrt{\mu_{r(1)i}}-\sqrt{\mu_{\tilde{r}(1)i}}\right)^2 + \frac{1+o(1)}{2}\sum_{i=2}^n\left(\sqrt{\mu_{ir(1)}}-\sqrt{\mu_{i\tilde{r}(1)}}\right)^2 \\
&=& (1+o(1))n\beta^2.
\end{eqnarray*}
On the other hand, it is easy to see that lower bound
$$\max_{t\in(0,1)}\sum_i\left(t\tilde{\mu}_i+(1-t)\mu_i-\tilde{\mu}_i^t\mu_i^{1-t}\right)\geq \frac{1}{2}\sum_i\left(\sqrt{\mu_i}-\sqrt{\tilde{\mu}_i}\right)^2.$$
Therefore, the $t$ that minimizes $\log \int p^{1-t}q^t$ satisfies $t=\frac{1}{2}(1+o(1))$.
We give a bound for $L$. Since $t$ minimizes $\log \int p^{1-t}q^t$, we have $P_t\left(\log\frac{q}{p}\right)^2=\Var_{P_t}\left(\log\frac{q}{p}\right)$. Note that
$$\Var_{P_t}\left(\log\frac{q}{p}\right) = \sum_i \left(\log\frac{\mu_i}{\tilde{\mu}_i}\right)^2\mu_i^{1-t}\tilde{\mu}_i^t\lesssim \sum_i\left(\sqrt{\mu_i}-\sqrt{\tilde{\mu}_i}\right)^2\lesssim n\beta^2,$$
where we have used the fact $\max_i\sqrt{\frac{\mu_i}{\tilde{\mu}_i}}=1+o(1)$. Therefore, we have
$$\exp\left(\log \int p^{1-t}q^t -L\right)\geq \exp\left(-(1+o(1))n\beta^2-\sqrt{Cn\beta^2}\right),$$
for some constant $C>0$. Next, we need to lower bound $P_t\left(0<\frac{t}{L}\log\frac{q}{p}(X)<1\right)$. Since $\frac{1}{L}\log\frac{q}{p}(X)$ is sum of independent random variables, and it is properly standardized, we only need to establish a central limit theorem for $\frac{1}{L}\log\frac{q}{p}(X)$. Thus, it is sufficient to check Lyapunov's condition. Under the current setting, Lyapunov's condition is easily satisfied by $\max_i\sqrt{\frac{\mu_i}{\tilde{\mu}_i}}\leq 1+o(1)$. Hence, $P_t\left(0<\frac{t}{L}\log\frac{q}{p}(X)<1\right)\geq c$ for some constant $c>0$. Therefore, up to a constant, we have obtained the following lower bound for (\ref{eq:toL-Poi}),
$$\exp\left(-(1+o(1))n\beta^2-\sqrt{Cn\beta^2}\right)=\exp\left(-(1+o(1))\left(1+O\left(\frac{1}{\sqrt{n\beta^2}}\right)\right)n\beta^2\right).$$
For the case $n\beta^2=O(1)$, this is of a constant order, which is of the same order as $\exp\left(-(1+o(1))n\beta^2\right)$. For the case $n\beta^2\rightarrow\infty$, $\sqrt{Cn\beta^2}$ is of a smaller order compared with $n\beta^2$, and the bound $\exp\left(-(1+o(1))n\beta^2\right)$ still holds. The proof is complete.
\end{proof}

\begin{proof}[Proof of Theorem \ref{thm:exact-Poi}]
The upper bound follows the simple argument of Markov inequality as is done in the proof of Theorem \ref{thm:exact}. We give the proof of the lower bound by following the same argument in the proof of Theorem \ref{thm:exact-lower}. Consider $r$ with $r(i)=i$. Then, for each $l\in[n]$, we choose an $r_l\in\mathcal{R}$ that differs with $r$ only at the $j$th position and $\|r_l-r\|^2=1$. By the same argument used in the proof of Theorem \ref{thm:exact-lower}, the lower bound is determined by the following probability,
$$\mathbb{P}_r\left(\max_{l\in[n]}\prod_{1\leq i\neq j\leq n}p(X_{ij}|\mu_{r_l(i)r_l(j)})> \prod_{1\leq i\neq j\leq n}p(X_{ij}|\mu_{r(i)r(j)})\right),$$
where we use $p(X|\mu)$ to denote the probability mass function of $\text{Poisson}(\mu)$.
By the construction of $r_l$'s, the probability above equals
\begin{equation}
\mathbb{P}_r\left(\max_{l\in[n]}\prod_{i\in[n]\backslash\{l\}}\frac{p(X_{il}|\mu_{r_l(i)r_l(l)})}{p(X_{il}|\mu_{r(i)r(l)})}\frac{p(X_{li}|\mu_{r_l(l)r_l(i)})}{p(X_{li}|\mu_{r(l)r(i)})}>1\right).\label{eq:enemy}
\end{equation}
Since $\max_{i,l}\left|\frac{\mu_{r_l(i)r_l(l)}-\mu_{r(i)r(l)}}{\mu_{r(i)r(l)}}\right|=o(1)$, a Taylor expansion argument leads to
\begin{eqnarray*}
&& \sum_{i\in[n]\backslash\{l\}}\left(\mu_{r(i)r(l)}\log\frac{\mu_{r(i)r(l)}}{\mu_{r_l(i)r_l(l)}}+\mu_{r_l(i)r_l(l)}-\mu_{r(i)r(l)}\right) \\
&=& 2(1+o(1))\sum_{i\in[n]\backslash\{l\}}\left(\sqrt{\mu_{r_l(i)r_l(l)}}-\sqrt{\mu_{r(i)r(l)}}\right)^2 \\
&=& 2n(1+o(1))\beta^2.
\end{eqnarray*}
Therefore,
$$\log \prod_{i\in[n]\backslash\{l\}}\frac{p(X_{il}|\mu_{r_l(i)r_l(l)})}{p(X_{il}|\mu_{r(i)r(l)})}=\sum_{i\in[n]\backslash\{l\}}(X_{il}-\mu_{r(i)r(l)})\log\frac{\mu_{r_l(i)r_l(l)}}{\mu_{r(i)r(l)}}-2n(1+o(1))\beta^2,$$
and thus we can lower bound (\ref{eq:enemy}) by
$$\mathbb{P}_r\left(\max_{l\in[n]}Y_l>\sqrt{2n}(1+o(1))\beta\right),$$
where
$$Y_l=\frac{1}{\sqrt{2(n-1)}}\left(\sum_{i\in[n]\backslash\{l\}}\frac{X_{il}-\mu_{r(i)r(l)}}{2\beta}\log\frac{\mu_{r_l(i)r_l(l)}}{\mu_{r(i)r(l)}}+\sum_{i\in[n]\backslash\{l\}}\frac{X_{li}-\mu_{r(l)r(i)}}{2\beta}\log\frac{\mu_{r_l(l)r_l(i)}}{\mu_{r(l)r(i)}}\right).$$
Note that
$$
\Var\left(\frac{X_{il}-\mu_{r(i)r(l)}}{2\beta}\log\frac{\mu_{r_l(i)r_l(l)}}{\mu_{r(i)r(l)}}\right) = \frac{\mu_{r(i)r(l)}}{4\beta^2}\left(\log\frac{\mu_{r_l(i)r_l(l)}}{\mu_{r(i)r(l)}}\right)^2 = 1+o(1),
$$
where the last equality is by a Taylor expansion argument. Moreover, the Poisson tail of $\frac{X_{il}-\mu_{r(i)r(l)}}{2\beta}\log\frac{\mu_{r_l(i)r_l(l)}}{\mu_{r(i)r(l)}}$ is well behaved. Therefore, we can apply a high-dimensional Gaussian approximation result by \cite{chernozhukov2013gaussian}, and obtain
$$\left|\mathbb{P}_r\left(\max_{l\in[n]}Y_l>\sqrt{2n}(1+o(1))\beta\right)-\mathbb{P}_r\left(\max_{l\in[n]}W_l>\sqrt{2n}(1+o(1))\beta\right)\right|=o(1),$$
where $W_1,...,W_n$ are jointly Gaussian with zero mean, and the covariance structure is determined by $\mathbb{E}W_l^2=o(1)$ and $\max_{j\leq l}\mathbb{E}W_jW_l=O(n^{-1})$. Therefore, by Lemma \ref{lem:hartigan} and the condition $\limsup_n\frac{n\beta^2}{\log n}<1$, we have
$$\mathbb{P}_r\left(\max_{l\in[n]}Y_l>\sqrt{2n}(1+o(1))\beta\right)\geq \frac{1}{2}-o(1),$$
which implies the desired result. The proof is complete.
\end{proof}

\subsection{Proofs of Auxiliary Lemmas}\label{sec:pf-aux}

\begin{proof}[Proof of Lemma \ref{lem:dev}]
We first bound $\mathbb{P}_r\left(L(\tilde{r})\leq L(r)\right)$ for any $\tilde{r},r\in\mathcal{R}$. Direct calculation gives
\begin{eqnarray*}
&& \mathbb{P}_r\left(L(\tilde{r})\leq L(r)\right) \\
&=& \mathbb{P}_r\left(2\sum_{1\leq i\neq j\le n}(\mu_{r(i)r(j)}-\mu_{\tilde{r}(i)\tilde{r}(j)})(X_{ij}-\mu_{r(i)r(j)})\leq -\sum_{1\leq i\neq j\leq n}(\mu_{\tilde{r}(i)\tilde{r}(j)}-\mu_{r(i)r(j)})^2\right).
\end{eqnarray*}
By the condition (\ref{eq:error}), the above probability is upper bounded by
$$\exp\left(-\frac{1}{8\sigma^2}\sum_{1\leq i\neq j\leq n}(\mu_{\tilde{r}(i)\tilde{r}(j)}-\mu_{r(i)r(j)})^2\right),$$
which is further bounded by $\exp\left(-\frac{n\beta^2}{4\sigma^2}\|\tilde{r}-r\|^2\right)$ according to the condition (\ref{eq:signal}). Therefore, we have
$$\sup_{\tilde{r}\in\mathcal{R}_m}\mathbb{P}_r\left(L(\tilde{r})\leq L(r)\right)\leq \sup_{\tilde{r}\in\mathcal{R}_m}\exp\left(-\frac{n\beta^2}{4\sigma^2}\|\tilde{r}-r\|^2\right)=\exp\left(-\frac{n\beta^2m}{4\sigma^2}\right).$$
The proof is complete.
\end{proof}

\begin{proof}[Proof of Lemma \ref{lem:card}]
To get an element in $\mathcal{R}_m$, we can first pick $l\in[m]$ and then only focused on those $\tilde{r}$ that differs with $r$ at $l$ positions. Next, we pick $l$ positions from $[n]$. The error at each position is defined as $n_1,...n_l$, respectively. They must satisfy $n_1^2+\cdots +n_l^2=m$. Finally, for each $i$ that belongs to the $l$ positions, $\tilde{r}(i)$ either takes $r(i)-n_i$ or $r(i)+n_i$. This gives the bound
$$|\mathcal{R}_m|\leq \sum_{l=1}^{\min(m,n)}{n\choose l}\sum_{n_1^2+\cdots+n_l^2=m}2^l.$$
We have
$$\sum_{n_1^2+\cdots+n_l^2=m}2^l\leq {m-1\choose l-1}2^l\leq\exp\left(l\log\frac{2em}{l}\right).$$
For $1\leq m\leq n$, we have
$$|\mathcal{R}_m|\leq (2e)^m\sum_{l=1}^m{n\choose l}\leq\exp\left(m\log(2e)+m\log\left(\frac{en}{m}\right)\right)=\left(\frac{2e^2n}{m}\right)^m.$$
For $n<m\leq n^2$, we have
$$|\mathcal{R}_m|\leq 2^n\sum_{l=1}^n\exp\left(l\log\frac{2em}{l}\right)\leq 2^n n \exp\left(n\log\frac{2em}{n}\right)\leq\left(\frac{8em}{n}\right)^n.$$
Finally, for $m>n^2$, we have $|\mathcal{R}_m|\leq |\mathcal{R}|\leq n^n$, and the proof is complete.
\end{proof}

\begin{proof}[Proof of Lemma \ref{lem:uniform}]
We first introduce some notation. For any $r\in\mathcal{R}$, we use $\mu(r)$ to denote an $n\times n$ matrix whose diagonal entries are all $0$ and off-diagonal entries are given by $\mu(r)_{ij}=\mu_{r(i)r(j)}$. Moreover, for any matrix that appears in this proof, its off-diagonal entries are all $0$. In this way, we can write $L(r)=\fnorm{Z}^2$ and $L(\tilde{r})=\fnorm{X-\mu(\tilde{r})}^2$. We give a lower bound for $L(\tilde{r})$,
\begin{eqnarray*}
L(\tilde{r}) &=& \fnorm{Z+\mu(r)-\mu(\tilde{r})}^2 \\
&=& \fnorm{Z}^2 + \fnorm{\mu(r)-\mu(\tilde{r})}^2 + 2\iprod{Z}{\mu(r)-\mu(\tilde{r})} \\
&\geq& \fnorm{Z}^2 + 2n\beta^2l^2t^2  + 2\iprod{Z}{\mu(r)-\mu(\tilde{r})},
\end{eqnarray*}
where the last inequality is by
$$\fnorm{\mu(r)-\mu(\tilde{r})}^2\geq 2n\beta^2\|\tilde{r}-r\|^2\geq 2n\beta^2l^2t^2,$$
according to the condition (\ref{eq:signal}) and $\|\tilde{r}-r\|\geq lt$. Therefore, we have
$$\min_{\{\tilde{r}\in\mathcal{R}: tl<\|\tilde{r}-r\|\leq t(l+1)\}} L(\tilde{r})- L(r)\geq 2n\beta^2l^2t^2  + 2\min_{\{\tilde{r}\in\mathcal{R}:\|\tilde{r}-r\|\leq t(l+1)\}}\iprod{Z}{\mu(r)-\mu(\tilde{r})},$$
and we obtain the bound
\begin{eqnarray*}
&& \mathbb{P}_{r}\left(\min_{\{\tilde{r}\in\mathcal{R}: tl<\|\tilde{r}-r\|\leq t(l+1)\}} L(\tilde{r})\leq L(r)\right) \\
&\leq& \mathbb{P}\left(\max_{\{\tilde{r}\in\mathcal{R}:\|\tilde{r}-r\|\leq t(l+1)\}}\left|\iprod{Z}{\mu(r)-\mu(\tilde{r})}\right|\geq n\beta^2l^2t^2\right) \\
&\lesssim& \exp\left(-C\frac{n^2\beta^4l^4t^4}{\left\|\max_{\{\tilde{r}\in\mathcal{R}:\|\tilde{r}-r\|\leq t(l+1)\}}\left|\iprod{Z}{\mu(r)-\mu(\tilde{r})}\right|\right\|^2_{\psi_2}}\right)
\end{eqnarray*}
The norm $\|\cdot\|_{\psi_2}$ is the Orlicz norm with function $\psi_2(x)=e^{x^2}-1$.

Now it is sufficient to bound $\left\|\max_{\{\tilde{r}\in\mathcal{R}:\|\tilde{r}-r\|\leq t(l+1)\}}\left|\iprod{Z}{\mu(r)-\mu(\tilde{r})}\right|\right\|^2_{\psi_2}$.
We let the truth $r$ be fixed. Then, for any $\tilde{r}\in\mathcal{R}$, define $Z(\tilde{r})=\iprod{Z}{\mu(r)-\mu(\tilde{r})}$, a sub-Gaussian process on $\mathcal{R}$. Note that for any $\tilde{r}_1,\tilde{r}_2$, we have
\begin{eqnarray}
\label{eq:GP-tail}\mathbb{P}\left(|Z(\tilde{r}_1)-Z(\tilde{r}_2)|>t\right) &\leq& 2\exp\left(-\frac{t^2}{2\sigma^2\fnorm{\mu(\tilde{r})-\mu(\tilde{r})}^2}\right) \\
\label{eq:GP-dist}&\leq& 2\exp\left(-\frac{t^2}{4Mn\sigma^2\beta^2\|\tilde{r}_1-\tilde{r}_2\|^2}\right),
\end{eqnarray}
where the inequality (\ref{eq:GP-tail}) is by (\ref{eq:error}), and the inequality (\ref{eq:GP-dist}) is by (\ref{eq:signal-entropy}). Therefore, by Lemma 2.2.1 of \cite{van1996weak}, the natural semimetric between $Z(\tilde{r}_1)$ and $Z(\tilde{r}_2)$ is
$$d(\tilde{r}_1,\tilde{r}_2)=\sqrt{12Mn\sigma^2\beta^2\|\tilde{r}_1-\tilde{r}_2\|^2}.$$
We use Corollary 2.2.5 of \cite{van1996weak}, and get
\begin{eqnarray*}
&& \left\|\max_{\{\tilde{r}\in\mathcal{R}:\|\tilde{r}-r\|\leq t(l+1)\}}\left|\iprod{Z}{\mu(r)-\mu(\tilde{r})}\right|\right\|_{\psi_2} \\
&\leq& \left\|\max_{\{\tilde{r}\in\mathcal{R}:d(\tilde{r},r)\leq \sqrt{12Mn\sigma^2\beta^2t^2(l+1)^2}\}}\left|\iprod{Z}{\mu(r)-\mu(\tilde{r})}\right|\right\|_{\psi_2} \\
&\lesssim& \int_0^{\sqrt{12  Mn\sigma^2\beta^2\|r-r_0\|^2}} \sqrt{\log\left(1+N(\epsilon,\bar{\mathcal{R}}_l,d)\right)}d\epsilon,
\end{eqnarray*}
where $N(\epsilon,\bar{\mathcal{R}}_l,d)$ denotes the covering number of
$$\bar{\mathcal{R}}_l=\{\tilde{r}\in\mathcal{R}:d(\tilde{r},r)\leq \sqrt{12Mn\sigma^2\beta^2t^2(l+1)^2}\}$$
 with respect to the distance $d$ and radius $\epsilon$. A standard volume ratio argument gives
$$\int_0^{\sqrt{12  Mn\sigma^2\beta^2t^2(l+1)^2}} \sqrt{\log\left(1+N(\epsilon,\bar{\mathcal{R}}_l,d)\right)}d\epsilon\lesssim \sqrt{  M n^2\sigma^2\beta^2t^2(l+1)^2}.$$
Finally, the exponent has lower bound
$$\frac{n^2\beta^4l^4t^4}{\left\|\max_{\{\tilde{r}\in\mathcal{R}:\|\tilde{r}-r\|\leq t(l+1)\}}\left|\iprod{Z}{\mu(r)-\mu(\tilde{r})}\right|\right\|^2_{\psi_2}}\gtrsim \frac{\beta^2t^2l^2}{M\sigma^2},$$
which completes the proof.
\end{proof}

\begin{proof}[Proof of Lemma \ref{lem:dev-comp}]
We use the notation
$\Delta_r=\sum_{j=1}^n\theta_{r(j)}-\sum_{j=1}^n\theta_j$.
By (\ref{eq:stable-comp}), we have $\max_{r\in\mathcal{R}}|\Delta_r|=o(\sqrt{n}\beta)$. Note that for each $i$, $\mathbb{E}_rS_i=\theta_{r(i)}-\Delta_r/n$. With some direct calculation, the event $\sum_{i=1}^n(S_i-\theta_{\tilde{r}(i)})^2\leq\sum_{i=1}^n(S_i-\theta_{r(i)})^2$ is equivalent to
$$2\sum_{i=1}^n(\theta_{r(i)}-\theta_{\tilde{r}(i)})(S_i-\theta_{r(i)}+\Delta_r/n)\leq -\sum_{i=1}^n(\theta_{r(i)}-\theta_{\tilde{r}(i)})^2+2\Delta_r(\Delta_r-\Delta_{\tilde{r}})/n.$$
Using the conditions (\ref{eq:signal-comp}) and (\ref{eq:stable-comp}), we get the bound
$$-\sum_{i=1}^n(\theta_{r(i)}-\theta_{\tilde{r}(i)})^2+2\Delta_r(\Delta_r-\Delta_{\tilde{r}})/n\leq -(1+o(1))\sum_{i=1}^n(\theta_{r(i)}-\theta_{\tilde{r}(i)})^2.$$
Therefore, it suffices to bound
$$\mathbb{P}_r\left(2\sum_{i=1}^n(\theta_{r(i)}-\theta_{\tilde{r}(i)})(S_i-\theta_{r(i)}+\Delta_r/n)\leq -(1+o(1))\sum_{i=1}^n(\theta_{r(i)}-\theta_{\tilde{r}(i)})^2\right).$$
By (\ref{eq:error}), we get the bound
$$\exp\left(-(1+o(1))\frac{n\sum_{i=1}^n(\theta_{r(i)}-\theta_{\tilde{r}(i)})^2}{4\sigma^2}\right).$$
Finally, using (\ref{eq:signal-comp}), we obtain the desired rate, and the proof is complete.
\end{proof}

\begin{proof}[Proof of Lemma \ref{lem:dev-add}]
Note that for the $S_i$ defined in (\ref{eq:score-coll}), we have $\mathbb{E}_rS_i=\theta_{r(i)}$. Some direct calculation gives
\begin{eqnarray*}
&& \mathbb{P}_r\left(\sum_{i=1}^n(S_i-\theta_{\tilde{r}(i)})^2\leq\sum_{i=1}^n(S_i-\theta_{r(i)})^2\right) \\
&\leq& \mathbb{P}_r\left(2\sum_{i=1}^n(\theta_{r(i)}-\theta_{\tilde{r}(i)})(S_i-\theta_{r(i)})\leq -\sum_{i=1}^n(\theta_{r(i)}-\theta_{\tilde{r}(i)})^2\right).
\end{eqnarray*}
Using (\ref{eq:error}) and (\ref{eq:signal-comp}), we obtain the bound
$$\exp\left(-(1+o(1))\frac{n\sum_{i=1}^n(\theta_{r(i)}-\theta_{\tilde{r}(i)})^2}{4\sigma^2}\right)\leq \exp\left(-(1+o(1))\frac{n\beta^2\|\tilde{r}-r\|^2}{4\sigma^2}\right),$$
and the proof is complete.
\end{proof}

\begin{proof}[Proof of Lemma \ref{lem:dev-PL}]
We use $\mathbbm{1}\in\mathbb{R}^n$ to denote a column vector with entries all $1$. Given an $r\in\mathcal{R}'$, define the hat matrix by
\begin{equation}
H_r=\frac{1}{n}\mathbbm{1}\mathbbm{1}^T + \frac{\left(I-\frac{1}{n}\mathbbm{1}\mathbbm{1}^T\right)rr^T\left(I-\frac{1}{n}\mathbbm{1}\mathbbm{1}^T\right)}{\left\|\left(I-\frac{1}{n}\mathbbm{1}\mathbbm{1}^T\right)r\right\|^2}.\label{eq:hat-matrix}
\end{equation}
Using basic facts of linear regression, we get
$$\text{PL}(r)=\|(I-H_r)\hat{S}\|^2.$$
Then, with some rearrangement, the inequality $\text{PL}(\tilde{r})\leq \text{PL}(r)$ is equivalent to
\begin{eqnarray}
\nonumber && 2\iprod{(I-H_{\tilde{r}})\mathbb{E}_r\hat{S}}{\hat{S}-\mathbb{E}_r\hat{S}} + \|H_r(\hat{S}-\mathbb{E}_r\hat{S})\|^2 - \|H_{\tilde{r}}(\hat{S}-\mathbb{E}_r\hat{S})\|^2  \\
\label{eq:pull} &\leq& -\left\|(I-H_{\tilde{r}})\mathbb{E}_r\hat{S}\right\|^2
\end{eqnarray}
Note that
$$\mathbb{E}_r\hat{S}=\tilde{\beta}r + C\mathbbm{1},$$
where the value of $C$ depends on whether it is the comparison model or the collaboration model. According to the definition of the projection matrix $I-H_r$, we have $(I-H_r)\mathbbm{1}=0$, so that $(I-H_{\tilde{r}})\mathbb{E}_r\hat{S}=\tilde{\beta}(I-H_{\tilde{r}})r$.

Now we study $\left\|(I-H_{\tilde{r}})r\right\|^2$. Define
$$
x=\left(I-\frac{1}{n}\mathbbm{1}\mathbbm{1}^T\right)r\quad\text{and}\quad y=\left(I-\frac{1}{n}\mathbbm{1}\mathbbm{1}^T\right)\tilde{r}.
$$
Then,
\begin{equation}
\left\|(I-H_{\tilde{r}})r\right\|^2=\left\|x-\frac{y^Tx}{\|y\|^2}y\right\|^2=\|x-y\|^2-\frac{|(x-y)^Ty|^2}{\|y\|^2}.\label{eq:lunar}
\end{equation}
Since
$$\left\|\frac{1}{n}\mathbbm{1}\mathbbm{1}^Tr-\frac{1}{n}\mathbbm{1}\mathbbm{1}^T\tilde{r}\right\|^2=\frac{1}{n}\left(\sum_{i=1}^nr(i)-\sum_{i=1}^n\tilde{r}(i)\right)^2=O(c_n^2/n)=o(1),$$
we have
$$\|x-y\|^2=\left\|(r-\tilde{r})-\left(\frac{1}{n}\mathbbm{1}\mathbbm{1}^Tr-\frac{1}{n}\mathbbm{1}\mathbbm{1}^T\tilde{r}\right)\right\|^2=(1+o(1))\|r-\tilde{r}\|^2.$$
Moreover,
\begin{eqnarray}
\nonumber \frac{|(x-y)^Ty|^2}{\|y\|^2} &\leq& \frac{2\left|(x-y)^T\left(\frac{x-y}{2}\right)\right|^2}{\|y\|^2} + \frac{2\left|(x+y)^T\left(\frac{x-y}{2}\right)\right|^2}{\|y\|^2} \\
\label{eq:spirit} &=& \frac{\|x-y\|^2}{2\|y\|^2} + \frac{|\|x\|^2-\|y\|^2|^2}{2\|y\|^2}.
\end{eqnarray}
Note that $\|y\|^2\gtrsim n^3$. Thus, the first term of (\ref{eq:spirit}) is bounded by $o(1)\|\tilde{r}-r\|^2$. The second term of (\ref{eq:spirit}) is bounded by $o(1)$ according to the definition of $\mathcal{R}'$. Therefore,
\begin{equation}
\left\|(I-H_{\tilde{r}})r\right\|^2\geq (1+o(1))\|\tilde{r}-r\|^2.\label{eq:under}
\end{equation}

We have
\begin{eqnarray*}
\mathbb{P}_r\left(\text{PL}(\tilde{r})\leq \text{PL}(r)\right) &\leq& \mathbb{P}_r\left(2\tilde{\beta}\iprod{(I-H_{\tilde{r}})r}{\hat{S}-\mathbb{E}_r\hat{S}}\leq -(1-t)\tilde{\beta}^2\|(I-H_{\tilde{r}})r\|^2\right) \\
&& + \mathbb{P}_r\left( \|H_r(\hat{S}-\mathbb{E}_r\hat{S})\|^2 - \|H_{\tilde{r}}(\hat{S}-\mathbb{E}_r\hat{S})\|^2\leq -t\tilde{\beta}^2\|(I-H_{\tilde{r}})r\|^2\right),
\end{eqnarray*}
and we will bound the two terms separately. Using (\ref{eq:error}), we can bound the first term by
$$\exp\left(-(1+o(1))\frac{(1-t)n\beta^2\|(I-H_{\tilde{r}})r\|^2}{4\sigma^2}\right)\leq \exp\left(-(1+o(1))\frac{(1-t)n\beta^2\|\tilde{r}-r\|^2}{4\sigma^2}\right).$$
For the second term, we write $A=(H_{\tilde{r}}-H_r)(H_{\tilde{r}}+H_r)$, and then
\begin{eqnarray*}
&& \mathbb{P}_r\left( \|H_r(\hat{S}-\mathbb{E}_r\hat{S})\|^2 - \|H_{\tilde{r}}(\hat{S}-\mathbb{E}_r\hat{S})\|^2\leq -t\tilde{\beta}^2\|(I-H_{\tilde{r}})r\|^2\right) \\
&\leq& \mathbb{P}_r\left(Z^TAZ \geq t\frac{2n\beta^2}{\sigma^2}\|\tilde{r}-r\|^2\right),
\end{eqnarray*}
where $Z=\sqrt{2n}(\hat{S}-\mathbb{E}_r\hat{S})/\sigma$. Suppose the eigenvalue decomposition of $A$ is $\sum_l d_lu_lu_l^T$, we define $\bar{A}=\sum_l \max(0,d_l)u_lu_l^T$, and it is sufficient to bound $\mathbb{P}_r\left(Z^T\bar{A}Z \geq t\frac{2n\beta^2}{\sigma^2}\|\tilde{r}-r\|^2\right)$. By Hanson-Wright inequality \citep{hsu2012tail}, we have
\begin{eqnarray*}
&& \mathbb{P}_r\left(Z^T\bar{A}Z \geq t\frac{2n\beta^2}{\sigma^2}\|\tilde{r}-r\|^2\right) \\
&\leq& \exp\left(-C\min\left\{\frac{\left(t\frac{2n\beta^2}{\sigma^2}\|\tilde{r}-r\|^2-\Tr(\bar{A})\right)^2}{\fnorm{A}^2},\frac{t\frac{2n\beta^2}{\sigma^2}\|\tilde{r}-r\|^2-\Tr(\bar{A})}{\opnorm{A}}\right\}\right).
\end{eqnarray*}
Since $\rank{(A)}\leq 2$, we have $\Tr(\bar{A})\leq 2\opnorm{A}\leq 2\fnorm{A}\leq 4\fnorm{H_r-H_{\tilde{r}}}\leq C_1\sqrt{\frac{\|\tilde{r}-r\|^2}{n^3}}$. Under the condition $\frac{n\beta^2}{4\sigma^2}>1$, we have
$$\mathbb{P}_r\left(Z^T\bar{A}Z \geq t\frac{2n\beta^2}{\sigma^2}\|\tilde{r}-r\|^2\right)\leq \exp\left(-C_2tn^{3/2}\|\tilde{r}-r\|\frac{n\beta^2}{\sigma^2}\right).$$

Combining the above two bounds, we obtain
$$\mathbb{P}_r\left(\text{PL}(\tilde{r})\leq \text{PL}(r)\right)\leq \exp\left(-(1+o(1))\frac{(1-t)n\beta^2\|\tilde{r}-r\|^2}{4\sigma^2}\right) + \exp\left(-C_2tn^{3/2}\|\tilde{r}-r\|\frac{n\beta^2}{\sigma^2}\right).$$
For $\tilde{r}\in\mathcal{R}_m$, we choose $t=\frac{1}{4C_2}\sqrt{\frac{m}{n^3}}$, and we obtain
$$\mathbb{P}_r\left(\text{PL}(\tilde{r})\leq \text{PL}(r)\right)\leq 2\exp\left(-(1+o(1))\left(1-\frac{1}{4C_2}\sqrt{\frac{m}{n^3}}\right)\frac{mn\beta^2}{4\sigma^2}\right).$$
The proof is complete.
\end{proof}

\begin{proof}[Proof of Lemma \ref{lem:uniform-PL}]
We will borrow arguments and notations from the proof of Lemma \ref{lem:dev-PL}. Recall the hat matrix defined in (\ref{eq:hat-matrix}). By (\ref{eq:lunar}), we have
$$\left\|(I-H_{\tilde{r}})r\right\|^2\leq \|r-\tilde{r}\|^2.$$
Then using (\ref{eq:under}), we obtain
$$\left\|(I-H_{\tilde{r}})r\right\|^2=(1+o(1)) \|r-\tilde{r}\|^2.$$
Using the same argument that derives (\ref{eq:pull}), we have
\begin{eqnarray*}
&& \min_{\{\tilde{r}\in\mathcal{R}': tl<\|\tilde{r}-r\|\leq t(l+1)\}} \text{PL}(\tilde{r})-\text{PL}(r) \\
&\geq&\min_{\{\tilde{r}\in\mathcal{R}': tl<\|\tilde{r}-r\|\leq t(l+1)\}}\left( 2\tilde{\beta}\iprod{(I-H_{\tilde{r}})r}{\hat{S}-\mathbb{E}_r\hat{S}} +\tilde{\beta}^2\left\|(I-H_{\tilde{r}})r\right\|^2/2\right) \\
&& + \min_{\{\tilde{r}\in\mathcal{R}': tl<\|\tilde{r}-r\|\leq t(l+1)\}}\left(\|H_r(\hat{S}-\mathbb{E}_r\hat{S})\|^2 - \|H_{\tilde{r}}(\hat{S}-\mathbb{E}_r\hat{S})\|^2+\tilde{\beta}^2\left\|(I-H_{\tilde{r}})r\right\|^2/2\right) \\
&\geq& \min_{\{\tilde{r}\in\mathcal{R}': \|\tilde{r}-r\|\leq t(l+1)\}}2\tilde{\beta}\iprod{(I-H_{\tilde{r}})r}{\hat{S}-\mathbb{E}_r\hat{S}} + (1+o(1))\tilde{\beta}^2t^2l^2/2 \\
&& + \min_{\{\tilde{r}\in\mathcal{R}': tl<\|\tilde{r}-r\|\leq t(l+1)\}}\left(\|H_r(\hat{S}-\mathbb{E}_r\hat{S})\|^2 - \|H_{\tilde{r}}(\hat{S}-\mathbb{E}_r\hat{S})\|^2\right)  + (1+o(1))\tilde{\beta}^2t^2l^2/2 \\
&\geq& \min_{\{v\in\mathbb{R}^n: \|v\|\leq 2t(l+1)\}}2\tilde{\beta}\iprod{v}{\hat{S}-\mathbb{E}_r\hat{S}} + (1+o(1))\tilde{\beta}^2t^2l^2/2 \\
&& + \min_{\{A=A^T\in\mathbb{R}^{n\times n}:\fnorm{A}\leq Ct(l+1)n^{-3/2},\rank(A)\leq 2\}} (\hat{S}-\mathbb{E}_r\hat{S})^TA(\hat{S}-\mathbb{E}_r\hat{S}) +(1+o(1))\tilde{\beta}^2t^2l^2/2.
\end{eqnarray*}
Therefore,
\begin{eqnarray}
\nonumber && \mathbb{P}_{r}\left(\min_{\{\tilde{r}\in\mathcal{R}': tl<\|\tilde{r}-r\|\leq t(l+1)\}} \text{PL}(\tilde{r})\leq \text{PL}(r)\right) \\
\nonumber &\leq& \mathbb{P}_r\left(\max_{\{v\in\mathbb{R}^n: \|v\|\leq 2t(l+1)\}}\left|\iprod{v}{\hat{S}-\mathbb{E}_r\hat{S}}\right|\geq |\tilde{\beta}|t^2l^2/8\right) \\
\nonumber && + \mathbb{P}_r\left(\max_{\{A=A^T\in\mathbb{R}^{n\times n}:\fnorm{A}\leq Ct(l+1)n^{-3/2},\rank(A)\leq 2\}} (\hat{S}-\mathbb{E}_r\hat{S})^TA(\hat{S}-\mathbb{E}_r\hat{S})\geq |\tilde{\beta}|^2t^2l^2/4\right) \\
\label{eq:metropolis} &\leq& \mathbb{P}_r\left(\max_{\{v\in\mathbb{R}^n: \|v\|\leq 1\}}\left|\iprod{v}{Z}\right|\geq \frac{\sqrt{n}|\tilde{\beta}|tl}{32\sigma}\right) \\
\label{eq:metropolis-pt2} && + \mathbb{P}_r\left(\max_{\{A=A^T\in\mathbb{R}^{n\times n}:\fnorm{A}\leq 1,\rank(A)\leq 2\}} Z^TAZ\geq \frac{n^{5/2}\beta^2tl}{4C\sigma^2}\right),
\end{eqnarray}
where $Z=\sqrt{2n}(\hat{S}-\mathbb{E}_r\hat{S})/\sigma$.

A standard discretization argument in \cite[Lemma A.1]{gao2015rate} gives
$$\max_{\{v\in\mathbb{R}^n: \|v\|\leq 1\}}\left|\iprod{v}{Z}\right|\leq 2\max_{1\leq j\leq J}\left|\iprod{v_j}{Z}\right|,$$
where $\{v_j\}_{1\leq j\leq J}$ is a subset of $\{v\in\mathbb{R}^n: \|v\|\leq 1\}$ such that for any $v\in\mathbb{R}^n$ with $\|v\|\leq 1$, there is some $j\in[J]$ that satisfies $\|v_j-v\|\leq 1/2$, and a covering number argument gives the bound $J\leq e^{5n}$. By union bound, we can bound (\ref{eq:metropolis}) by
\begin{eqnarray*}
&& \mathbb{P}_r\left(2\max_{1\leq j\leq J}\left|\iprod{v_j}{Z}\right|\geq \frac{\sqrt{n}|\tilde{\beta}|tl}{32\sigma}\right)\\
&\leq& \sum_{j=1}^J\mathbb{P}_r\left(2\left|\iprod{v_j}{Z}\right|\geq \frac{\sqrt{n}|\tilde{\beta}|tl}{32\sigma}\right)\\
&\leq& 2\exp\left(5n - C'\frac{n\beta^2t^2l^2}{\sigma^2}\right),
\end{eqnarray*}
for some constant $C'>0$, and the last inequality above is by (\ref{eq:error}). 

For (\ref{eq:metropolis-pt2}), a similar discretization argument gives
$$\max_{A\in\mathcal{A}}Z^TAZ \leq 8\max_{1\leq j\leq L}Z^TA_jZ,$$
where $\mathcal{A}=\{A=A^T\in\mathbb{R}^{n\times n}:\fnorm{A}\leq 1,\rank(A)\leq 2\}$, and $\}A_j\}_{j\in[L]}\subset\mathcal{A}$ with $L$ satisfying $L\leq e^{C_1n}$ for some constant $C_1>0$. Then, we can bound (\ref{eq:metropolis-pt2}) by
\begin{eqnarray*}
&& \mathbb{P}_r\left(8\max_{1\leq j\leq L}Z^TA_jZ\geq \frac{n^{5/2}\beta^2tl}{4C\sigma^2}\right) \\
&\leq& \sum_{j=1}^L\mathbb{P}_r\left(8Z^TA_jZ\geq \frac{n^{5/2}\beta^2tl}{4C\sigma^2}\right) \\
&\leq& \exp\left(C_1n - C_2\frac{n^{5/2}\beta^2 tl}{\sigma^2}\right),
\end{eqnarray*}
where the last inequality above is by Hanson-Wright inequality \citep{hsu2012tail} and (\ref{eq:error}). Since $\|\tilde{r}-r\|$ is at most $n^{3/2}$, we only need to consider $tl\lesssim n^{3/2}$, which implies
$$\exp\left(C_1n - C_2\frac{n^{5/2}\beta^2 tl}{\sigma^2}\right)\leq \exp\left(C_1n - C_2'\frac{n\beta^2 t^2l^2}{\sigma^2}\right).$$
Combining the bounds for (\ref{eq:metropolis}) and (\ref{eq:metropolis-pt2}), we obtain the desired conclusion.
\end{proof}

\begin{proof}[Proof of Lemma \ref{lem:dev-Poi}]
For any $\tilde{r}\in\mathcal{R}_m$, we have
\begin{eqnarray*}
&& \mathbb{P}_r\left(\prod_{1\leq i\neq j\leq n}\frac{\mu_{\tilde{r}(i)\tilde{r}(j)}^{X_{ij}}e^{-\mu_{\tilde{r}(i)\tilde{r}(j)}}}{X_{ij}!}\geq \prod_{1\leq i\neq j\leq n}\frac{\mu_{r(i)r(j)}^{X_{ij}}e^{-\mu_{r(i)r(j)}}}{X_{ij}!}\right) \\
&=& \mathbb{P}_r\left(\prod_{1\leq i\neq j\leq n} \left(\frac{\mu_{\tilde{r}(i)\tilde{r}(j)}}{\mu_{r(i)r(j)}}\right)^{X_{ij}}e^{-\mu_{\tilde{r}(i)\tilde{r}(j)}+\mu_{r(i)r(j)}}\geq 1\right) \\
&\leq& \prod_{1\leq i\neq j\leq n}\mathbb{E}_r\sqrt{\left(\frac{\mu_{\tilde{r}(i)\tilde{r}(j)}}{\mu_{r(i)r(j)}}\right)^{X_{ij}}e^{-\mu_{\tilde{r}(i)\tilde{r}(j)}+\mu_{r(i)r(j)}}} \\
&=& \exp\left(-\frac{1}{2}\sum_{1\leq i\neq j\leq n}\left(\sqrt{\mu_{\tilde{r}(i)\tilde{r}(j)}}-\sqrt{\mu_{r(i)r(j)}}\right)^2\right).
\end{eqnarray*}
By (\ref{eq:signal-Poi}), we have obtain the upper bound $\exp\left(-n\beta^2\|\tilde{r}-r\|^2\right)$. The proof is complete by taking maximum over $\tilde{r}\in\mathcal{R}_m$.
\end{proof}

\section*{Acknowledgements}

The author thanks Qiyang Han for suggesting the proof of Theorem \ref{thm:upper2}. The research is supported in part by NSF grant DMS-1712957.

\bibliographystyle{plainnat}
\bibliography{rank}

\begin{thebibliography}{22}
\providecommand{\natexlab}[1]{#1}
\providecommand{\url}[1]{\texttt{#1}}
\expandafter\ifx\csname urlstyle\endcsname\relax
  \providecommand{\doi}[1]{doi: #1}\else
  \providecommand{\doi}{doi: \begingroup \urlstyle{rm}\Url}\fi

\bibitem[Anscombe(1948)]{anscombe1948transformation}
Francis~J Anscombe.
\newblock The transformation of poisson, binomial and negative-binomial data.
\newblock \emph{Biometrika}, 35\penalty0 (3/4):\penalty0 246--254, 1948.

\bibitem[Bradley and Terry(1952)]{bradley1952rank}
Ralph~Allan Bradley and Milton~E Terry.
\newblock Rank analysis of incomplete block designs: I. the method of paired
  comparisons.
\newblock \emph{Biometrika}, 39\penalty0 (3/4):\penalty0 324--345, 1952.

\bibitem[Braverman and Mossel(2008)]{braverman2008noisy}
Mark Braverman and Elchanan Mossel.
\newblock Noisy sorting without resampling.
\newblock In \emph{Proceedings of the nineteenth annual ACM-SIAM symposium on
  Discrete algorithms}, pages 268--276. Society for Industrial and Applied
  Mathematics, 2008.

\bibitem[Chen et~al.(2016)Chen, Suh, and Goldsmith]{chen2016information}
Yuxin Chen, Changho Suh, and Andrea~J Goldsmith.
\newblock Information recovery from pairwise measurements.
\newblock \emph{IEEE Transactions on Information Theory}, 62\penalty0
  (10):\penalty0 5881--5905, 2016.

\bibitem[Chernozhukov et~al.(2013)Chernozhukov, Chetverikov, and
  Kato]{chernozhukov2013gaussian}
Victor Chernozhukov, Denis Chetverikov, and Kengo Kato.
\newblock Gaussian approximations and multiplier bootstrap for maxima of sums
  of high-dimensional random vectors.
\newblock \emph{The Annals of Statistics}, 41\penalty0 (6):\penalty0
  2786--2819, 2013.

\bibitem[Collier and Dalalyan(2016)]{collier2016minimax}
Olivier Collier and Arnak~S Dalalyan.
\newblock Minimax rates in permutation estimation for feature matching.
\newblock \emph{The Journal of Machine Learning Research}, 17\penalty0
  (1):\penalty0 162--192, 2016.

\bibitem[Gao et~al.(2015)Gao, Lu, and Zhou]{gao2015rate}
Chao Gao, Yu~Lu, and Harrison~H Zhou.
\newblock Rate-optimal graphon estimation.
\newblock \emph{The Annals of Statistics}, 43\penalty0 (6):\penalty0
  2624--2652, 2015.

\bibitem[Hartigan(2013)]{hartigan2013bounding}
JA~Hartigan.
\newblock Bounding the maximum of dependent random variables.
\newblock \emph{arXiv preprint arXiv:1312.1207}, 2013.

\bibitem[Hoff et~al.(2002)Hoff, Raftery, and Handcock]{hoff2002latent}
Peter~D Hoff, Adrian~E Raftery, and Mark~S Handcock.
\newblock Latent space approaches to social network analysis.
\newblock \emph{Journal of the american Statistical association}, 97\penalty0
  (460):\penalty0 1090--1098, 2002.

\bibitem[Hsu et~al.(2012)Hsu, Kakade, and Zhang]{hsu2012tail}
Daniel Hsu, Sham Kakade, and Tong Zhang.
\newblock A tail inequality for quadratic forms of subgaussian random vectors.
\newblock \emph{Electronic Communications in Probability}, 17, 2012.

\bibitem[Kobler et~al.(2012)Kobler, Sch{\"o}ning, and
  Tor{\'a}n]{kobler2012graph}
Johannes Kobler, Uwe Sch{\"o}ning, and Jacobo Tor{\'a}n.
\newblock \emph{The graph isomorphism problem: its structural complexity}.
\newblock Springer Science \& Business Media, 2012.

\bibitem[Kumar and Vassilvitskii(2010)]{kumar2010generalized}
Ravi Kumar and Sergei Vassilvitskii.
\newblock Generalized distances between rankings.
\newblock In \emph{Proceedings of the 19th international conference on World
  wide web}, pages 571--580. ACM, 2010.

\bibitem[Luce(2005)]{luce2005individual}
R~Duncan Luce.
\newblock \emph{Individual choice behavior: A theoretical analysis}.
\newblock Courier Corporation, 2005.

\bibitem[Mao et~al.(2017)Mao, Weed, and Rigollet]{mao2017minimax}
Cheng Mao, Jonathan Weed, and Philippe Rigollet.
\newblock Minimax rates and efficient algorithms for noisy sorting.
\newblock \emph{arXiv preprint arXiv:1710.10388}, 2017.

\bibitem[Massart(2007)]{massart2007}
Pascal Massart.
\newblock \emph{Concentration inequalities and model selection}, volume 1896.
\newblock Springer, 2007.

\bibitem[Negahban et~al.(2012)Negahban, Oh, and Shah]{negahban2012iterative}
Sahand Negahban, Sewoong Oh, and Devavrat Shah.
\newblock Iterative ranking from pair-wise comparisons.
\newblock In \emph{Advances in Neural Information Processing Systems}, pages
  2474--2482, 2012.

\bibitem[Shah et~al.(2016)Shah, Balakrishnan, Guntuboyina, and
  Wainwright]{shah2016stochastically}
Nihar Shah, Sivaraman Balakrishnan, Aditya Guntuboyina, and Martin Wainwright.
\newblock Stochastically transitive models for pairwise comparisons:
  Statistical and computational issues.
\newblock In \emph{International Conference on Machine Learning}, pages 11--20,
  2016.

\bibitem[Shah and Wainwright(2015)]{shah2015simple}
Nihar~B Shah and Martin~J Wainwright.
\newblock Simple, robust and optimal ranking from pairwise comparisons.
\newblock \emph{arXiv preprint arXiv:1512.08949}, 2015.

\bibitem[Van Der~Vaart and Wellner(1996)]{van1996weak}
Aad~W Van Der~Vaart and Jon~A Wellner.
\newblock Weak convergence.
\newblock In \emph{Weak Convergence and Empirical Processes}, pages 16--28.
  Springer, 1996.

\bibitem[Yu(1997)]{yu1997assouad}
Bin Yu.
\newblock Assouad, fano, and le cam.
\newblock \emph{Festschrift for Lucien Le Cam}, 423:\penalty0 435, 1997.

\bibitem[Zaslavskiy et~al.(2009)Zaslavskiy, Bach, and Vert]{zaslavskiy2009path}
Mikhail Zaslavskiy, Francis Bach, and Jean-Philippe Vert.
\newblock A path following algorithm for the graph matching problem.
\newblock \emph{IEEE Transactions on Pattern Analysis and Machine
  Intelligence}, 31\penalty0 (12):\penalty0 2227--2242, 2009.

\bibitem[Zhang and Zhou(2016)]{zhang2016minimax}
Anderson~Y Zhang and Harrison~H Zhou.
\newblock Minimax rates of community detection in stochastic block models.
\newblock \emph{The Annals of Statistics}, 44\penalty0 (5):\penalty0
  2252--2280, 2016.

\end{thebibliography}


\end{document}